\definecolor{dgreen}{rgb}{0.0, 0.5, 0.3} 
\definecolor{dyellow}{rgb}{8.0, 0.74, 0}
  \setlist{nosep}
\theoremstyle{definition}
\newtheorem{definitionx}{Definition}[section]
\theoremstyle{plain}
\newtheorem{proposition}[definitionx]{Proposition}
\newtheorem{corollary}[definitionx]{Corollary}
\newtheorem{lemma}[definitionx]{Lemma}
\newenvironment{example}
  {\pushQED{\qed}\examplex}
  {\popQED\endexamplex}
 \newenvironment{remark}
  {\pushQED{\qed}\remarkx}
  {\popQED\endremarkx}
  \newenvironment{definition}
  {\pushQED{\qed}\definitionx}
  {\popQED\enddefinitionx}
\DeclareSymbolFont{stmry}{U}{stmry}{m}{n}
\DeclareMathSymbol\fatsemi\mathop{stmry}{"23}
\DeclareMathOperator*{\colim}{colim}
\DeclareMathOperator{\ob}{Ob}
\DeclareMathOperator{\el}{El}
\DeclareMathOperator*{\argmin}{arg\,min}
\newcommand{\Set}[1]{\mathrm{#1}}%a named set
\newcommand{\cat}[1]{\mathcal{#1}}%a generic category
\newcommand{\Cat}[1]{{\mathbf{#1}}}%a named category
\newcommand{\fun}[1]{\mathrm{#1}}%a function
\newcommand{\id}{\mathrm{id}}
\newcommand{\then}{\mathbin{\fatsemi}}
\newcommand{\iso}{\cong}
\newcommand{\too}{\longrightarrow}
\newcommand{\To}[2][]{\xrightarrow[#1]{#2}}
\newcommand{\inv}{^{-1}}
\newcommand{\op}{^\tn{op}}
\newcommand{\tn}[1]{\textnormal{#1}}
\newcommand{\ol}[1]{\overline{#1}}
\newcommand{\nn}{\mathbb{N}}
\newcommand{\ja}{\mathcal{J}}
\newcommand{\smset}{\Cat{Set}}
\newcommand{\smcat}{\Cat{Cat}}
\newcommand{\catsharp}{\Cat{Cat}^{\sharp}}
\newcommand{\oorg}{\mathbb{O}\Cat{rg}}
\newcommand{\polycart}{\poly_{\Cat{Cart}}}
\newcommand{\Mod}{\Cat{Mod}}
\newcommand{\yon}{\mathcal{y}}
\newcommand{\poly}{\Cat{Poly}}
\newcommand{\tri}{\mathbin{\triangleleft}}
\newcommand{\free}{\mathfrak{m}}
\newcommand{\cofree}{\mathfrak{c}}
\newcommand{\ext}{\fun{Ext}}
\newcommand{\Bool}{\Set{Bool}}
\newcommand{\biglens}[2]{
     \begin{bmatrix}{\vphantom{f_f^f}#2} \\ {\vphantom{f_f^f}#1} \end{bmatrix}
}
\newcommand{\littlelens}[2]{
     \begin{bsmallmatrix}{\vphantom{f}#2} \\ {\vphantom{f}#1} \end{bsmallmatrix}
}
\newcommand{\lens}[2]{
  \relax\if@display
     \biglens{#1}{#2}
  \else
     \littlelens{#1}{#2}
  \fi
}
\newcommand{\qqand}{\qquad\text{and}\qquad}
\newcommand{\coto}{\nrightarrow}
\newcommand{\coh}[1]{^{(#1)}}
\newcommand{\hoc}[1]{_{(#1)}}
\newcommand{\modstruct}{\Xi}
\newcommand{\freeu}[1]{\eta} % free unit
\newcommand{\freem}[1]{\mu}  % free multiplication
  \tikzset{
biml/.tip={Glyph[glyph math command=triangleleft, glyph length=.95ex]},
bimr/.tip={Glyph[glyph math command=triangleright, glyph length=.95ex]},
}
\tikzset{
	tick/.style={postaction={
  	decorate,
    decoration={markings, mark=at position 0.5 with
    	{\draw[-] (0,.4ex) -- (0,-.4ex);}}}
  }
} 
\tikzset{
	slash/.style={postaction={
  	decorate,
    decoration={markings, mark=at position 0.5 with
    	{\draw[-] (.3ex,.3ex) -- (-.3ex,-.3ex);}}}
  }
} 
\tikzset{trees/.style={
	inner sep=0, 
	minimum width=0, 
	minimum height=0,
	level distance=.75cm, 
	sibling distance=.5cm,
%	every child/.style={fill},
	edge from parent/.style={shorten <= 2pt, draw, ->},
	grow'=up,
	decoration={markings, mark=at position 0.75 with \arrow{stealth}}
	}
}
\newcommand{\adj}[5][30pt]{%[size] Cat L, Left, Right, Cat R.
\begin{tikzcd}[ampersand replacement=\&, column sep=#1]
  #2\ar[r, shift left=6pt, "#3"]
  \ar[r, phantom, "\scriptstyle\Rightarrow"]\&
  #5\ar[l, shift left=6pt, "#4"]
\end{tikzcd}
}
\title{Pattern Runs on Matter: The Free Monad Monad as a Module over the Cofree Comonad Comonad}
\author{
    Sophie Libkind
    \institute{Topos Institute\\
    Berkeley, CA, USA}
    \email{sophie@topos.institute}
\and
    David I. Spivak
    \institute{Topos Institute\\
    Berkeley, CA, USA}
    \email{david@topos.institute}
}
\begin{document}

\maketitle

\begin{abstract}
Interviews run on people, programs run on operating systems, voting schemes run on voters, games run on players. Each of these is an example of the abstraction \textit{pattern runs on matter}. Pattern determines the decision tree that governs how a situation can unfold, while matter responds with decisions at each juncture. 

In this article, we will give a straightforward and concrete construction of the free monad monad for $(\poly, \tri, \yon)$, the category of polynomial functors with the substitution monoidal product. 
Although the free monad has been well-studied in other contexts, the construction we give is streamlined and explicitly illustrates how the free monad represents terminating decision trees. We will also explore the naturally arising interaction between the free monad and cofree comonad. Again, while the interaction itself is known, the perspective we take is the free monad as a module over the cofree comonad. Lastly, we will give four applications of the module action to interviews, computer programs, voting, and games. In each example, we will see how the free monad represents pattern, the cofree comonad represents matter, and the module action represents \emph{runs on}. 
\end{abstract}

\section{Introduction}\label{sec:intro}

The etymology of matter and pattern are ``mother'' and ``father''; this pair of terms offers a very basic sense in which to carve up the world. Like two parents, matter and pattern represent a fundamental dichotomy: matter is the pure material, unconcerned with our ideas about it; pattern is pure structure, unconcerned with what substantiates it. And yet one may have an intuitive sense that pattern ``runs on''---must be instantiated in---matter. In this paper, we show that this idea matches both our intuition and the mathematics of a module structure by which free monads are a module over cofree comonads. 

Intuitively, interviews, programs, voting schemes, and games represent patterns. But what is an interview without a person to be interviewed; what is a program without a operating system to run it on; what is a voting scheme without voters to do the voting; what is a game without players to play it? In each case, the pattern runs on a material substrate.

In this paper we give an account for this intuition in terms of free monads $\free$ and cofree comonads $\cofree$ on polynomial functors. We show that for any $p,q:\poly$, there is a natural map
\begin{equation}\label{eqn.module_struct}
	\modstruct_{p,q}\colon\free_p\otimes\cofree_q\to\free_{p\otimes q}.
\end{equation}
In fact, this map gives $\free$ the structure of a $\cofree$-module, in a precise sense; see \cref{sec.module}. The free monad $\free_p$ represents patterns as terminating decision tree with shape $p$,  the cofree comonad $\cofree_q$ represents matter as infinite  behaviors trees of shape $q$, and the interaction law $\modstruct_{p,q}$ interprets ``runs on''.

Consider the behavior of a Moore machine, which transforms lists of $A$-inputs into lists of $B$-outputs for sets $A,B$. This behavior is determined by an element $b\colon\yon\to\cofree_{B\yon^A}$ of the terminal coalgebra on the polynomial $q\coloneqq B\yon^A$. But how does one apply this behavior to a list of $A$'s? The latter is a map $\ell\colon\yon\to\free_{A\yon}$. To actually apply the behavior $b$ to the list $\ell$, we use an obvious map $\varphi\colon A\yon\otimes B\yon^A\to B\yon$ and the module structure \eqref{eqn.module_struct} to obtain a list of $B$'s.
\begin{equation}\label{eqn.list_to_list}
\yon\cong\yon\otimes\yon\To{\ell\otimes b}\free_{A\yon}\otimes\cofree_{B\yon^A}\To{\modstruct_{A\yon,B\yon^A}}\free_{A\yon\otimes B\yon^A}\To{\free_\varphi}\free_{B\yon}
\end{equation}

Whereas patterns start and end, matter is never destroyed. One can think of $\free_p$ as the type of terminating programs and of $\cofree_q$ as the type of operating systems or online algorithms. This intuition is captured by the fact that elements of the free monad $\free_p$ are ``well-founded trees'' \cite{abramsky2001handbook, kock2012polynomial}---in the case $p$ is finitary, a wellfounded tree is one with finite height---whereas elements of the cofree comonad $\cofree_q$ are generally non-wellfounded, e.g. infinite in height even for finitary~$q$. The module map $\modstruct_{p,q}$ pairs the wellfounded tree with the non-wellfounded tree, following the shape of the wellfounded one; for example the list of $B$'s in \eqref{eqn.list_to_list} will have exactly the same length as the list of $A$'s.

\paragraph{Related work.}
A construction of the free monad for much more general (than polynomial) endofunctors was given by Kelly in \cite{kelly1980unified}. The treatment was greatly simplified by Shulman and others in \cite{nlab:transfinite_construction_of_free_algebras}. The case of polynomial endofunctors is more restrictive, and as such has a far more straightforward construction.

A monad-comonad interaction law, again in more general settings, was described in \cite{Katsumata2020interaction}. This paper structures interaction laws in an interesting and useful but somehow ad hoc way, as a category whose objects are triples $(T,C,f)$, where $T$ is a monad, $C$ is a comonad, and $f\colon T\otimes C\to\id$ is a map satisfying certain natural conditions, and whose morphisms are maps $T\to T'$ and $C'\to C$ satisfying certain constraints.

\paragraph{Contributions.}

\begin{enumerate}
    \item Simple concrete constructions of both free monads and cofree comonads in $(\poly, \tri, \yon)$.
    \item A proof that the free monad is a module over the cofree comonad.
    \item Four applications of this module, each having the form \textit{pattern runs on matter}.
\end{enumerate}
\smallskip
For these, see \cref{def.free_monad,prop:cofree}; \cref{thm.main}; and \cref{sec:apps}, respectively. We also give a simplified definition of the  ``dual'' of a polynomial functor that specializes the notion given in \cite{Katsumata2020interaction}, and we generalize it to be functorial in a monad $t$; see \eqref{eqn.generalize_dual}.

\paragraph{Notation.}

We often denote the identity on an object $x$ by the object name itself rather than $\id_x$. We denote the cardinality of a set $X$ by $\#X$. 
%We denote 
% composition by either of $f\then g=g\circ f$, as convenient; 
%the substitution product $p\tri q$ of polynomials corresponds to $p\circ q$ as functors. 
If $L\dashv R$ is an adjunction, we denote it
\[
\adj{\cat{C}}{L}{R}{\cat{D}}
\]
so that the 2-cell shown indicates the direction of both the unit $\cat{C}\to R \circ L$ and the counit $L \circ R \to\cat{D}$.

\paragraph{Acknowledgements.}
The authors thanks Harrison Grodin and Brandon Shapiro for many useful conversations.
This material is based upon work supported by the Air Force Office of Scientific Research under award numbers FA9550-20-1-0348 and FA9550-23-1-0376.

\subsection{Preliminaries}

Although we will assume basic familiarity with the category $\poly$ of univariate polynomial functors on $\smset$ and natural transformations between them, we will begin by clarifying notation. We write a polynomial functor $p: \poly$ as $p = \sum_{P: p(1)} \yon^{p[P]}$ where $p(1)$ are the \textbf{positions} of $p$ and $p[P]$ are the \textbf{directions} at the position $P$. With this notation, a map $\phi: p \to q$ in $\poly$ consists of 
\begin{itemize}
    \item A function on positions $\phi(1): p(1) \to q(1)$.
    \item For each position $P: p(1)$, a function backwards on directions $\phi^\#_P: q[\phi_1(P)] \to p[P]$.
\end{itemize}
A map $\phi\colon p \to q$ in $\poly$ is \textbf{cartesian} if for every position $P: p(1)$, the map on directions $\phi^\#_P$ is an isomorphism (equivalently, when all $\phi$'s naturality squares are pullbacks). 

There are many monoidal products in $\poly$~\cite{niu2022poly}, however in this article we focus on the Dirichlet and substitution products. 
% For $p, q: \poly$, their Dirichlet product is 
% \[
%     p \otimes q \coloneqq \sum_{(P, Q): p(1) \times q(1)} \yon^{p[P] \times q[Q]}.
% \]
% This monoidal structure has unit $\yon$ and a closure (internal hom) defined by 
% \[
%     [p, q] \coloneqq \sum_{\phi: \poly(p,q)}\yon^{\sum_{P:p(1)}q[\phi_1(P)]}.
% \]
The substitution product has unit $\yon$ and is defined by
\[
    p \tri q = \sum_{P: p(1)} \prod_{p[P]}\sum_{Q:q(1)}\prod_{q[Q]} \yon
\]
Using the distributive law, a position in $p \tri q$ is a position $P$ in $p$ and for every direction in $p[P]$ a position in $q$.
It will be useful to consider the corolla forest view on polynomials and the $\tri$-monoidal product, since these will be the building blocks of the decision trees represented by the free monad monad. 

Consider $p = \yon^3 + \yon^2$. We can notate $p$ using a corolla forest in which each position $P:p(1)$ is a tree with $p[P]$-many branches. The root of each tree is labeled by its corresponding position and  branches correspond to  directions.
\[
\begin{tikzpicture}[trees,red, 
  level 1/.style={sibling distance=5mm},
  level 2/.style={sibling distance=2.5mm},
  level 3/.style={sibling distance=1.25mm}]
	\node (a) {$0$}
    	child {node {}} 
    	child {node {}}
    	child {node {}}
		;
        \node[right = of a] (b) {$1$}
		  child {node {}}
		  child {node {}}
		;
\end{tikzpicture}
\]%
Likewise, the polynomial $q = 2\yon^4 + \yon^2 + 1$ is notated by the corolla forest:
\[
\begin{tikzpicture}[trees,blue, 
  level 1/.style={sibling distance=5mm},
  level 2/.style={sibling distance=2.5mm},
  level 3/.style={sibling distance=1.25mm}]
	\node (a) {$0$}
    	child {node {}}
    	child {node {}}
            child {node {}}
            child {node {}}
		;
        \node[right = 0.75in of a] (b) {$1$}
		  child {node {}}
		  child {node {}}
            child {node {}}
            child {node {}}
		;
        \node[right = 0.75in of b] (c) {$2$}
    	child {node {}}
    	child {node {}}
		;
        \node[right = 0.75in of c] (d) {$3$}
		;
\end{tikzpicture}
\]%
\mbox{}\vspace{-.1in}\mbox{}

Then the positions of $p\tri q$ can be represented by trees of height 2, whose building blocks are the trees corresponding to the positions of $p$ and $q$. In particular, a position of $p \tri q$ is a position of $p$ and for each branch, a position of $q$. The directions are the number of dangling leaves. Here are three examples of positions in $p \tri q$ with eight, eight, and two directions respectively.

\[
\begin{tikzpicture}[trees,red, 
  level 1/.style={sibling distance=10mm},
  level 2/.style={sibling distance=3mm},
  level 3/.style={sibling distance=1.25mm}]
	\node[red] (a) {$0$}
    	child[red] {node[blue] {$0$}
                    child[blue]
                    child[blue]
                    child[blue]
                    child[blue]
            }
            child[red] {node[blue] {$2$}
                child[blue]
                child[blue]
            }
            child[red] {node[blue] {$2$}
                child[blue]
                child[blue]
            }
        ;

        \node[red, right = 1.5in of a] (b) {$0$}
    	child[red] {node[blue] {$0$}
                child[blue]
                child[blue]
                child[blue]
                child[blue]
            }
            child[red] {node[blue] {$3$}
            }
            child[red] {node[blue] {$1$}
                child[blue]
                child[blue]
                child[blue]
                child[blue]
            }
        ;
        \node[red, right = 1.5in of b] (c) {$1$}
    	child[red] {node[blue] {$2$}
                child[blue]
                child[blue]
            }
            child[red] {node[blue] {$3$}
            }
        ;
\end{tikzpicture}
\]%
\mbox{}\vspace{-.3in}\mbox{}

\section{The Free Monad Monad}\label{sec:free-monad}
For a polynomial $p$, we will define the free monad $\free_p$ through transfinite induction~\cite{nlab:transfinite_construction_of_free_algebras} by defining polynomials $p\hoc{\alpha}$ for each ordinal $\alpha$ and inclusions $p\hoc{\alpha} \to p \hoc{\beta}$ for ordinals $\alpha < \beta$.
\begin{itemize}
    \item Base case:  Define $p\hoc{0} \coloneqq \yon$.
    \item For successor ordinals $\alpha+1$: Define $p\hoc{\alpha + 1} \coloneqq \yon + p \tri p\hoc{\alpha}$. For the inclusions define $\iota\hoc{0}: \yon \to \yon + p$ to be the inclusion; note that it is cartesian. Then, given $\iota\hoc{\alpha}$, define $\iota\hoc{\alpha + 1}$ to be
    \[
        p\hoc{\alpha+1} = \yon + p \tri p \hoc{\alpha} \xrightarrow{\yon + p \tri \iota\hoc{\alpha}} \yon + p \tri p \hoc{\alpha +1} = p \hoc{\alpha + 2}.
    \]
    which is also cartesian.
    \item For limit ordinals $\alpha$: Define $p \hoc{\alpha} \coloneqq \colim_{\alpha' < \alpha} p\hoc{\alpha'}$.  For each $\alpha' < \alpha$, let $\iota\hoc{\alpha'} \colon p\hoc{\alpha'} \to p\hoc{\alpha}$ be the natural inclusion. These are cartesian by \cref{prop.polycart_colimits}. Lastly, we define the inclusion $\iota\hoc{\alpha} \colon p \hoc{\alpha} \to p \hoc{\alpha + 1}$ to be induced by the cocone of maps which are defined for $\alpha' < \alpha$ by
    \begin{equation}\label{eq:limit-to-limit-plus-one}
        p\hoc{\alpha'} \xrightarrow{\iota\hoc{\alpha'}} p \hoc{\alpha' + 1} = \yon + p \tri p \hoc{\alpha'} \xrightarrow{\yon + p \tri \iota\hoc{\alpha'}} \yon + p \tri p\hoc{\alpha} = p\hoc{\alpha + 1}.
    \end{equation}
\end{itemize}

For ease of notation, we will also use $\iota\hoc{\alpha}\colon p\hoc{\alpha} \to p \hoc{\beta}$ to represent the composite of inclusions for any pair of ordinals $\alpha < \beta$.   
We will also define $\lambda\hoc{\alpha}\colon p \tri p\hoc{\alpha} \to p \hoc{\alpha+1}$ to be the coproduct inclusion.

\begin{definition}
    For an ordinal $\kappa$, a polynomial $p$ is called \textbf{$\kappa$-small} if for all $\kappa$-filtered categories $\ja$ and all diagrams $Q\colon \ja \to \smset$, the natural map 
    \[
        \colim_{j \colon \ja} \left(p \tri Q_j\right) \to p \tri \left( \colim_{j \colon \ja} Q_j \right)
    \]
    is an isomorphism.
\end{definition}

\begin{remark}
A polynomial $p: \poly$ is $\kappa$-small if and only if all of its direction-sets have cardinality less than $\kappa$. It is called \textbf{finitary} if and only if it is $\omega$-small. For every polynomial $p$, there is some $\kappa$ for which $p$ is $\kappa$-small.
\end{remark}

\begin{definition}\label{def.free_monad}
    If $p$ is $\kappa$-small, then define $\free_p \coloneqq p\hoc{\kappa}$. 
\end{definition}

We will show in~\cref{thm:free-adjunction} that $\free_p$ is the free monad on $p$. Therefore, $\free_p$ is unique and hence well-defined.

Think of the positions of $\free_p$ as decision trees whose building blocks are the positions of $p$. For instance, suppose that a position $P: p(1)$ is a question and the directions $p[P]$ are possible answers to question $P$. Then $\yon$  is a unique question with a unique answer, which we think of as ``no further questions''. Then, consider the positions of $\free_p$ that factor through  $p\hoc{\alpha}$. For example, the positions of $p \hoc{2} = \yon + p \tri(\yon + p)$ is either (1) no further questions or (2) a question and for each possible answer either no further questions or another question. Therefore $p\hoc{2}$ represents interviews with \textit{at most two questions}, and in general for finite $i$, $p\hoc{i}$ represents interviews with at most $i$ questions.  The directions of $\free_p$ represent the possible paths through the decision tree. 

\begin{example}
    For finitary $p$, the positions of $\free_p$ are $p$-trees with finite height and whose directions are the dangling leaves. 
\end{example}

\begin{example}
    A finite $\yon$-tree is determined by its height and has one dangling leaf; thus, $\free_\yon \cong  \nn \yon$.
\end{example}

\subsection{Monad Structure on $\free_p$}

Let $\Mod(\poly)$ denote the category of monoids in $(\poly, \yon, \tri)$.  Note that when viewed as endofunctors on $\smset$, a $\tri$-monoid is in fact a monad on $\smset$. Therefore we refer to the objects of $\Mod(\poly)$ as $\tri$-\textbf{monoids}, \textbf{polynomial monads}, or simply \textbf{monads}.

Next we will give a $\tri$-monoid structure on $\free_p$.
The unit $\freeu{p} \colon \yon \to \free_p$ is defined to be the inclusion $\iota\hoc{0}\colon \yon = p\hoc{0} \to \free_p$. The multiplication is more complicated so we begin with the following lemma.

\begin{lemma}\label{lem:def-mu}
    For ordinals $\alpha, \beta$ there exist maps $\mu\hoc{\alpha, \beta} \colon p\hoc{\alpha} \tri p\hoc{\beta} \to p\hoc{\alpha + \beta}$ such that for all $\alpha' < \alpha$ and $\beta' < \beta$ the following diagram commutes
    % https://q.uiver.app/#q=WzAsNCxbMCwwLCJwXFxob2N7XFxhbHBoYX0gXFx0cmkgcCBcXGhvY3tcXGJldGF9Il0sWzAsMSwicFxcaG9je1xcYWxwaGEnfSBcXHRyaSBwXFxob2N7XFxiZXRhJ30iXSxbMSwwLCJwXFxob2N7XFxhbHBoYSArIFxcYmV0YX0iXSxbMSwxLCJwXFxob2N7XFxhbHBoYScgKyBcXGJldGEnfSJdLFsxLDMsIlxcbXVcXGhvY3tcXGFscGhhJywgXFxiZXRhJ30iLDJdLFswLDIsIlxcbXVcXGhvY3tcXGFscGhhLCBcXGJldGF9Il0sWzAsMSwiXFxpb3RhXFxob2N7XFxhbHBoYX0gXFx0cmkgXFxpb3RhXFxob2N7XFxiZXRhfSIsMl0sWzIsMywiXFxpb3RhXFxob2N7XFxhbHBoYSArIFxcYmV0YX0iXV0=
\begin{equation}\label{eq:p-alpha-beta}
\begin{tikzcd}[column sep=large]
	{p\hoc{\alpha'} \tri p \hoc{\beta'}} & {p\hoc{\alpha' + \beta'}} \\
	{p\hoc{\alpha} \tri p\hoc{\beta}} & {p\hoc{\alpha + \beta}}
	\arrow["{\mu\hoc{\alpha, \beta}}"', from=2-1, to=2-2]
	\arrow["{\mu\hoc{\alpha', \beta'}}", from=1-1, to=1-2]
	\arrow["{\iota\hoc{\alpha'} \tri \iota\hoc{\beta'}}"', from=1-1, to=2-1]
	\arrow["{\iota\hoc{\alpha' + \beta'}}", from=1-2, to=2-2]
\end{tikzcd}
\end{equation}
\end{lemma}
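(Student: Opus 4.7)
The plan is to define $\mu\hoc{\alpha,\beta}$ by transfinite induction on $\alpha$, verifying \eqref{eq:p-alpha-beta} by a parallel induction. I induct on $\alpha$ (rather than $\beta$) because $(-)\tri p\hoc{\beta}$ preserves coproducts and filtered colimits of cartesian maps, whereas $p\hoc{\alpha}\tri(-)$ does not; the successor step will need exactly this left-distributivity. The base case $\alpha=0$ is the identity $\yon\tri p\hoc{\beta}\cong p\hoc{\beta}=p\hoc{0+\beta}$. For limit $\alpha$, we have $p\hoc{\alpha}\tri p\hoc{\beta}=\colim_{\alpha'<\alpha} p\hoc{\alpha'}\tri p\hoc{\beta}$ by colimit preservation, and $\mu\hoc{\alpha,\beta}$ is defined as the map induced by the cocone of composites $p\hoc{\alpha'}\tri p\hoc{\beta}\xrightarrow{\mu\hoc{\alpha',\beta}}p\hoc{\alpha'+\beta}\xrightarrow{\iota}p\hoc{\alpha+\beta}$, which is a cocone by the inductive compatibility.

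For the successor case $\alpha=\alpha'+1$, I decompose
\[
p\hoc{\alpha'+1}\tri p\hoc{\beta}\;\cong\;p\hoc{\beta}\;+\;p\tri\bigl(p\hoc{\alpha'}\tri p\hoc{\beta}\bigr)
\]
using right-distributivity and associativity, apply $\iota\colon p\hoc{\beta}\to p\hoc{\alpha'+\beta}$ on the left summand and $p\tri\mu\hoc{\alpha',\beta}$ on the right to land in $p\hoc{\alpha'+\beta}+p\tri p\hoc{\alpha'+\beta}$, and finally map into $p\hoc{(\alpha'+1)+\beta}$. For finite $\beta$, this last map is the cocone $[\iota\hoc{\alpha'+\beta},\lambda\hoc{\alpha'+\beta}]$ into $p\hoc{\alpha'+\beta+1}=p\hoc{(\alpha'+1)+\beta}$. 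For infinite $\beta$, however, $(\alpha'+1)+\beta=\alpha'+\beta$ and the naive $\lambda$-step would overshoot; I instead invoke the $\kappa$-smallness of $p$ to obtain a descent $p\tri p\hoc{\gamma}\to p\hoc{\gamma}$ at limit ordinals $\gamma$ via $p\tri p\hoc{\gamma}\cong\colim_{\gamma'<\gamma}p\tri p\hoc{\gamma'}\to\colim p\hoc{\gamma'+1}\to p\hoc{\gamma}$. Compatibility of \eqref{eq:p-alpha-beta} at each stage then follows by diagram chase from naturality, the definition of $\iota\hoc{\alpha}$ in \eqref{eq:limit-to-limit-plus-one}, and the IH.

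The main obstacle is the ordinal-arithmetic quirk $1+\beta=\beta$ for infinite $\beta$, which forces the above case split in the successor step: the construction no longer factors cleanly through $p\hoc{\alpha'+\beta+1}$ via $\lambda$. Managing this uniformly requires the $\kappa$-smallness of $p$ to produce the descent map $p\tri p\hoc{\gamma}\to p\hoc{\gamma}$ at limit ordinals $\gamma$, which is precisely the mechanism by which $\free_p=p\hoc{\kappa}$ eventually acquires its $p$-algebra structure.
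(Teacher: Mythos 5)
Your construction coincides with the paper's wherever it works: induction on $\alpha$, the identity at $\alpha=0$, the copairing of $\iota\hoc{\beta}$ with $\lambda\hoc{\alpha'+\beta}\circ(p\tri\mu\hoc{\alpha',\beta})$ at successors, and left-distributivity of filtered colimits over $\tri$ at limits. You are also right that there is an ordinal-arithmetic wrinkle the paper elides: the successor step naturally lands in $p\hoc{(\alpha'+\beta)+1}$, which equals $p\hoc{(\alpha'+1)+\beta}$ only for finite $\beta$. But your repair for infinite $\beta$ does not work. The descent map $p\tri p\hoc{\gamma}\to p\hoc{\gamma}$ you invoke at a limit ordinal $\gamma$ requires the isomorphism $\colim_{\gamma'<\gamma}(p\tri p\hoc{\gamma'})\cong p\tri p\hoc{\gamma}$, and $\kappa$-smallness supplies this only when the indexing chain is $\kappa$-filtered, i.e.\ when $\gamma$ has cofinality at least $\kappa$; it fails at limit ordinals below $\kappa$, which is exactly where you would need it (at $\gamma=\alpha'+\beta$ for infinite $\beta<\kappa$). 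Concretely, take $p=\yon^\nn$ (which is $\omega_1$-small but not $\omega$-small) and $\gamma=\omega$: a position of $p\tri p\hoc{\omega}$ may assign to the $n$-th direction a tree of height $n$ and then factors through no finite stage, so no map $p\tri p\hoc{\omega}\to p\hoc{\omega}$ compatible with the $\lambda$'s exists. Indeed, if such a descent existed at every limit ordinal, the transfinite construction would already converge at $\omega$ for every $p$, which is false.

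The correct fix is much cheaper and needs no case split on $\beta$: the codomain the recursion actually produces is $p\hoc{\beta+\alpha}$ (sum reversed), since $\beta+0=\beta$, $(\beta+\alpha)+1=\beta+(\alpha+1)$, and $\beta+\alpha=\sup_{\alpha'<\alpha}(\beta+\alpha')$ at limit $\alpha$. With target $p\hoc{\beta+\alpha}$ all three steps of your (and the paper's) construction land exactly where they should, and nothing downstream changes: the compatibility squares, \cref{lem:mu-alpha-beta}, \cref{lem:associativity}, and the cocone into $\free_p=p\hoc{\kappa}$ use only that the targets are stages of the filtration compatible with the inclusions $\iota$. For finite ordinals $\beta+\alpha=\alpha+\beta$, so the discrepancy is invisible there; in any case no $p$-algebra structure on intermediate stages is needed, and appealing to one is the genuine flaw in your argument.
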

\begin{proof}
    We define the maps $\mu\hoc{\alpha, \beta}$ transinductively on $\alpha$. 
    
    For $\alpha = 0$, $\mu\hoc{\alpha, \beta}$ is the identity on $p\hoc{\beta}$.

    For successor ordinals $\alpha + 1$, suppose we have already defined $\mu\hoc{\alpha, \beta}$. Note that 
    \[
        p\hoc{\alpha +1} \tri p\hoc{\beta} = (\yon + p \tri p \hoc{\alpha}) \tri p\hoc{\beta} = p \hoc{\beta} + p \tri p\hoc{\alpha} \tri p\hoc{\beta}.     
    \] So we define $\mu\hoc{\alpha + 1,\beta}$ to be the copairing of the inclusion $\iota\hoc{\beta}\colon p\hoc{\beta} \to p \hoc{\alpha + \beta}$ with the map 
    \[
        p \tri p\hoc{\alpha}\tri p \hoc{\beta} \xrightarrow{p \tri \mu\hoc{\alpha, \beta}} p \tri p\hoc{\alpha + \beta} \xrightarrow{\lambda\hoc{\alpha + \beta}} p\hoc{\alpha + \beta + 1}.
    \]
    That the diagram in~\eqref{eq:p-alpha-beta} commutes for $\alpha' = \alpha +1$ can be shown inductively on $\alpha$. 

    Next suppose that $\alpha$ is a limit ordinal and suppose we have defined $\mu\hoc{\alpha', \beta} \colon p\hoc{\alpha'} \tri p\hoc{\beta} \to p\hoc{\alpha' + \beta}$ for $\alpha' < \alpha$. Then we define $\mu\hoc{\alpha, \beta}$ to be the composite
    \[
        p\hoc{\alpha} \tri p \hoc{\beta} = \left(\colim_{\alpha' < \alpha} p\hoc{\alpha'}\right) \tri p\hoc{\beta} 
        \iso \colim_{\alpha' < \alpha} (p\hoc{\alpha'} \tri p \hoc{\beta}) 
        \to \colim_{\alpha' < \alpha} p\hoc{\alpha' + \beta}
        \to p \hoc{\alpha + \beta}.
    \]
    The second isomorphism follows from~\cref{prop:colim-left-distribute}. That these maps make the diagram in~\eqref{eq:p-alpha-beta} commute is immediate.
\end{proof}

This lemma implies that for ordinals $\alpha, \beta < \kappa$ the maps
\[
    p\hoc{\alpha}\tri p \hoc{\beta} \xrightarrow{\mu\hoc{\alpha, \beta}} p \hoc{\alpha + \beta} \to \free_{p}
\] form a cocone into $\free_{p}$. Define the multiplication $\freem{p} \colon \free_p \tri \free_p \to \free_p$ using the universal property: 
\begin{align*}
    \free_p \tri \free_p &= \left(\colim_{\alpha < \kappa} p\hoc{\alpha} \right) \tri \left(\colim_{\beta < \kappa} p\hoc{\beta} \right)  \cong \colim_{\alpha < \kappa} \colim_{\beta < \kappa} (p \hoc{\alpha} \tri p\hoc{\beta}) \to \free_p.
\end{align*}
The second isomorphism follows from~\cref{prop:colim-right-distribute} and~\cref{prop:colim-left-distribute}. 
Explicitly, given a decision tree and for each dangling leaf another decision tree, the multiplication $\free_p \tri \free_p \to \free_p$ glues these together into a single decision tree. 
From this definition and Lemma~\eqref{lem:def-mu}, the following lemma is immediate.

\begin{lemma}\label{lem:mu-alpha-beta}
    Let $p$ be $\kappa$-small. Then for all $\alpha, \beta < \kappa$, the following diagram commutes.

    % https://q.uiver.app/#q=WzAsNCxbMCwwLCJwXFxob2N7XFxhbHBoYX0gXFx0cmkgcCBcXGhvY3tcXGJldGF9Il0sWzEsMCwicFxcaG9je1xcYWxwaGEgKyBcXGJldGF9Il0sWzEsMSwiXFxmcmVlX3AiXSxbMCwxLCJcXGZyZWVfcCBcXHRyaSBcXGZyZWVfcCJdLFsxLDIsIlxcaW90YVxcaG9je1xcYWxwaGEgKyBcXGJldGF9Il0sWzMsMiwiXFxtdSIsMl0sWzAsMSwiXFxtdVxcaG9je1xcYWxwaGEsIFxcYmV0YX0iXSxbMCwzLCJcXGlvdGFcXGhvY3tcXGFscGhhfSBcXHRyaSBcXGlvdGEgXFxob2N7XFxiZXRhfSIsMl1d
\[\begin{tikzcd}
	{p\hoc{\alpha} \tri p \hoc{\beta}} & {p\hoc{\alpha + \beta}} \\
	{\free_p \tri \free_p} & {\free_p}
	\arrow["{\iota\hoc{\alpha + \beta}}", from=1-2, to=2-2]
	\arrow["\freem{p}"', from=2-1, to=2-2]
	\arrow["{\mu\hoc{\alpha, \beta}}", from=1-1, to=1-2]
	\arrow["{\iota\hoc{\alpha} \tri \iota \hoc{\beta}}"', from=1-1, to=2-1]
\end{tikzcd}\]
\end{lemma}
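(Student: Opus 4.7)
The plan is to unwind the definition of $\freem{p}$ and appeal to the universal property of the iterated colimit used to construct it. Recall that $\free_p \tri \free_p$ was identified with $\colim_{\alpha<\kappa}\colim_{\beta<\kappa}(p\hoc{\alpha}\tri p\hoc{\beta})$ via the distributivity of $\tri$ over $\kappa$-filtered colimits, and that $\freem{p}$ was defined as the unique map out of this double colimit whose $(\alpha,\beta)$-leg is the composite
\[
p\hoc{\alpha}\tri p\hoc{\beta} \xrightarrow{\mu\hoc{\alpha,\beta}} p\hoc{\alpha+\beta} \xrightarrow{\iota\hoc{\alpha+\beta}} \free_p.
\]

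First, I would check that these legs genuinely assemble into a cocone indexed by $\kappa\times\kappa$. For $\alpha'\le\alpha$ and $\beta'\le\beta$ below $\kappa$, the required compatibility
\[
\iota\hoc{\alpha+\beta}\circ\mu\hoc{\alpha,\beta}\circ(\iota\hoc{\alpha'}\tri\iota\hoc{\beta'}) \;=\; \iota\hoc{\alpha'+\beta'}\circ\mu\hoc{\alpha',\beta'}
\]
is precisely the outer rectangle obtained by pasting the square \eqref{eq:p-alpha-beta} supplied by \cref{lem:def-mu} on the left with the identity $\iota\hoc{\alpha+\beta}\circ\iota\hoc{\alpha'+\beta'} = \iota\hoc{\alpha'+\beta'}$ on the right (this uses the shorthand, already in force, that $\iota$ denotes the composite of successor-inclusions). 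With the cocone established, $\freem{p}$ exists by the universal property.

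Second, the diagram in the statement is then immediate. By construction of the distributivity isomorphism $\free_p\tri\free_p \cong \colim_{\alpha}\colim_{\beta}(p\hoc{\alpha}\tri p\hoc{\beta})$, the canonical coprojection of the $(\alpha,\beta)$-component into $\free_p\tri\free_p$ is exactly $\iota\hoc{\alpha}\tri\iota\hoc{\beta}$. Postcomposing this coprojection with $\freem{p}$ gives, by definition of $\freem{p}$ as the map induced by the cocone, the leg $\iota\hoc{\alpha+\beta}\circ\mu\hoc{\alpha,\beta}$, which is the right-hand and top path of the square.

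There is no real obstacle: the substantive content has been absorbed into \cref{lem:def-mu} and the distributivity propositions cited earlier in the section. The only point that requires a moment of care is verifying that the coprojection of $p\hoc{\alpha}\tri p\hoc{\beta}$ into the iterated colimit really is $\iota\hoc{\alpha}\tri\iota\hoc{\beta}$; this is tracked by inspecting the two-step distributivity isomorphism, where each step replaces a coprojection $\iota\hoc{\gamma}$ on one factor while leaving the other factor untouched.
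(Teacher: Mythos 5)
Your argument is correct and is exactly the route the paper takes: the paper dispatches this lemma as ``immediate'' from the definition of $\freem{p}$ as the map induced by the cocone $\iota\hoc{\alpha+\beta}\circ\mu\hoc{\alpha,\beta}$ together with \cref{lem:def-mu}, and your write-up simply makes explicit the cocone compatibility check and the identification of the coprojection with $\iota\hoc{\alpha}\tri\iota\hoc{\beta}$. No discrepancies to report.
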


As a predecessor to proving the associativity law for $(\free_p, \freeu{p}, \freem{p})$ as a $\tri$-monoid, we present the following variant of~\cref{lem:def-mu} whose proof appears in~\cref{apx:free-monad}.

\begin{restatable}{lemma}{associativity}
\label{lem:associativity}
    For ordinals $\alpha, \beta, \gamma$, the following diagram commutes:
    % https://q.uiver.app/#q=WzAsNCxbMCwwLCJwXFxob2N7XFxhbHBoYX0gXFx0cmkgcFxcaG9je1xcYmV0YX0gXFx0cmkgcCBcXGhvY3tcXGdhbW1hfSJdLFswLDEsInBcXGhvY3tcXGFscGhhICsgXFxiZXRhfSBcXHRyaSBwIFxcaG9je1xcZ2FtbWF9Il0sWzEsMSwicFxcaG9je1xcYWxwaGEgKyBcXGJldGEgKyBcXGdhbW1hfSJdLFsxLDAsInBcXGhvY3tcXGFscGhhfSBcXHRyaSBwIFxcaG9je1xcYmV0YSArIFxcZ2FtbWF9Il0sWzAsMSwiXFxtdVxcaG9je1xcYWxwaGEsIFxcYmV0YX0gXFx0cmkgcCBcXGhvY3tcXGdhbW1hfSIsMl0sWzAsMywicFxcaG9je1xcYWxwaGF9IFxcdHJpIFxcbXVcXGhvY3tcXGJldGEsXFxnYW1tYX0iXSxbMywyLCJcXG11XFxob2N7XFxhbHBoYSwgXFxiZXRhICsgXFxnYW1tYX0iXSxbMSwyLCJcXG11XFxob2N7XFxhbHBoYSArIFxcYmV0YSwgXFxnYW1tYX0iLDJdXQ==
    \[\begin{tikzcd}[column sep=huge,ampersand replacement=\&]
    	{p\hoc{\alpha} \tri p\hoc{\beta} \tri p \hoc{\gamma}} \& {p\hoc{\alpha} \tri p \hoc{\beta + \gamma}} \\
    	{p\hoc{\alpha + \beta} \tri p \hoc{\gamma}} \& {p\hoc{\alpha + \beta + \gamma}}
    	\arrow["{\mu\hoc{\alpha, \beta} \tri p \hoc{\gamma}}"', from=1-1, to=2-1]
    	\arrow["{p\hoc{\alpha} \tri \mu\hoc{\beta,\gamma}}", from=1-1, to=1-2]
    	\arrow["{\mu\hoc{\alpha, \beta + \gamma}}", from=1-2, to=2-2]
    	\arrow["{\mu\hoc{\alpha + \beta, \gamma}}"', from=2-1, to=2-2]
    \end{tikzcd}\]
\end{restatable}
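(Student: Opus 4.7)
The plan is transfinite induction on $\alpha$, mirroring the inductive definition of $\mu\hoc{\alpha,\beta}$ in \cref{lem:def-mu}. The base case $\alpha = 0$ is immediate: $p\hoc{0} = \yon$ and $\mu\hoc{0,-}$ is the identity, so both composites around the square reduce to $\mu\hoc{\beta,\gamma}\colon p\hoc{\beta}\tri p\hoc{\gamma} \to p\hoc{\beta+\gamma} = p\hoc{0+\beta+\gamma}$.

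For the successor case $\alpha = \alpha'+1$, I would expand
\[
    p\hoc{\alpha'+1}\tri p\hoc{\beta}\tri p\hoc{\gamma} \;=\; p\hoc{\beta}\tri p\hoc{\gamma} \;+\; p\tri p\hoc{\alpha'}\tri p\hoc{\beta}\tri p\hoc{\gamma},
\]
and check the square on each summand separately. On the first summand, chasing the top-right path gives $\iota\hoc{\beta+\gamma}\circ\mu\hoc{\beta,\gamma}$, while the bottom-left path gives $\mu\hoc{\alpha'+1+\beta,\gamma}\circ(\iota\hoc{\beta}\tri p\hoc{\gamma})$; these agree precisely by the naturality square of \cref{lem:def-mu} applied with the ordinal pair $(\beta,\gamma)\leq(\alpha'+1+\beta,\gamma)$. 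On the second summand, I unfold the definition of $\mu\hoc{\alpha'+1,\beta}$ as $\lambda\hoc{\alpha'+\beta}\circ(p\tri\mu\hoc{\alpha',\beta})$ and likewise for $\mu\hoc{\alpha'+1,\beta+\gamma}$ and $\mu\hoc{\alpha'+1+\beta,\gamma}$; after cancelling the common outer application of $p\tri(-)$ and the coproduct injection $\lambda$, the required square becomes the inductive hypothesis at $\alpha'$ (using that $(\alpha'+1)+\beta+\gamma = \alpha'+(\beta+\gamma)+1$ in the successor-shifted positions).

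For the limit case, $p\hoc{\alpha} = \colim_{\alpha' < \alpha} p\hoc{\alpha'}$, and both $p\hoc{\alpha}\tri p\hoc{\beta}\tri p\hoc{\gamma}$ and the targets decompose as colimits over $\alpha' < \alpha$ using \cref{prop:colim-left-distribute} (left-distributivity of $\tri$ over colimits). By the definition of $\mu\hoc{\alpha,-}$ as the map induced from the cocone $(\mu\hoc{\alpha',-})_{\alpha' < \alpha}$, the outer square is the colimit of the squares from the inductive hypothesis applied at each $\alpha' < \alpha$, and hence commutes by the universal property.

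The main obstacle I anticipate is the successor case's second summand: it requires carefully untangling the definitions of three different $\mu\hoc{-,-}$'s, matching up coproduct injections $\lambda\hoc{-}$, and confirming that the residual diagram after stripping off the $p\tri(-)$ really is the inductive hypothesis (as opposed to a shifted variant requiring a further appeal to \cref{lem:def-mu}). The limit case is routine once distributivity of $\tri$ over colimits is in hand, and the base case is trivial.
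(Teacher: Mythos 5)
Your proposal matches the paper's proof essentially step for step: transfinite induction on $\alpha$, with the successor case split over the coproduct $p\hoc{\beta}\tri p\hoc{\gamma} + p\tri p\hoc{\alpha'}\tri p\hoc{\beta}\tri p\hoc{\gamma}$, the first summand handled by the compatibility square \eqref{eq:p-alpha-beta} of \cref{lem:def-mu}, the second by applying $p\tri(-)$ to the inductive hypothesis together with the defining equation of $\mu\hoc{\alpha'+\beta+1,\gamma}$ through $\lambda\hoc{\alpha'+\beta+\gamma}$, and the limit case by the colimit description of $\mu\hoc{\alpha,-}$. The obstacle you flag in the successor case is resolved exactly as you anticipate, so the argument goes through as planned.
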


\begin{proposition}\label{prop:m_p}
    For every polynomial $p: \poly$, there is a $\tri$-monoid structure on $\free_p$, for which the unit and multiplication
    $
        \freeu{p} \colon \yon\to\free_p
    $
    and
    $
        \freem{p} \colon \free_p\tri\free_p\to\free_p
    $
    are defined as above. 
\end{proposition}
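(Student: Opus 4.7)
The plan is to verify the three monoid axioms---left unit, right unit, and associativity---for $(\free_p, \freeu{p}, \freem{p})$, in each case by reducing to an identity between maps out of a cocone diagram of the form $p\hoc{\alpha} \tri p\hoc{\beta}$ (or its three-fold analogue) and then invoking the universal property of the colimits that define $\free_p \tri \free_p$ and $\free_p \tri \free_p \tri \free_p$. The heavy lifting has already been done in \cref{lem:def-mu,lem:mu-alpha-beta,lem:associativity}; what remains is to wire these together.

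For the left unit, I would observe that $\freeu{p} = \iota\hoc{0}$, that $p\hoc{0} = \yon$, and---by the base case of the inductive construction in \cref{lem:def-mu}---that $\mu\hoc{0,\beta}$ is the canonical identification $\yon \tri p\hoc{\beta} \iso p\hoc{\beta}$. Combined with \cref{lem:mu-alpha-beta} applied at $\alpha = 0$, this shows that $\freem{p} \circ (\iota\hoc{0} \tri \iota\hoc{\beta}) = \iota\hoc{\beta}$ for each $\beta < \kappa$. Since $\yon \tri \free_p \iso \colim_\beta (\yon \tri p\hoc{\beta}) \iso \free_p$ as a cocone with components $\iota\hoc{\beta}$, the universal property then forces the composite $\freem{p} \circ (\freeu{p} \tri \free_p)$ to be the canonical isomorphism, establishing the left unit law.

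For the right unit, I would first perform an auxiliary transfinite induction on $\alpha$ to establish that $\mu\hoc{\alpha,0} \colon p\hoc{\alpha} \tri \yon \to p\hoc{\alpha + 0} = p\hoc{\alpha}$ is the canonical identification. At successor stages, the copairing defining $\mu\hoc{\alpha+1, 0}$ reduces, using the inductive hypothesis together with the fact that $\lambda\hoc{\alpha}$ is literally the right coproduct inclusion into $\yon + p \tri p\hoc{\alpha}$, to the identity on $p\hoc{\alpha + 1}$; at limit stages the identification is immediate from the colimit definition. With this fact in hand, \cref{lem:mu-alpha-beta} applied at $\beta = 0$ yields the right unit law by the same cocone argument as for the left unit.

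For associativity, I would expand $\free_p \tri \free_p \tri \free_p$ as $\colim_{\alpha,\beta,\gamma < \kappa} \bigl(p\hoc{\alpha} \tri p\hoc{\beta} \tri p\hoc{\gamma}\bigr)$ using the colimit-distributivity results already cited in the definition of $\freem{p}$. On each such component, both $\bigl(\freem{p} \tri \free_p\bigr) \then \freem{p}$ and $\bigl(\free_p \tri \freem{p}\bigr) \then \freem{p}$ factor through $p\hoc{\alpha + \beta + \gamma}$---via $\mu\hoc{\alpha + \beta, \gamma} \circ \bigl(\mu\hoc{\alpha,\beta} \tri p\hoc{\gamma}\bigr)$ and $\mu\hoc{\alpha, \beta + \gamma} \circ \bigl(p\hoc{\alpha} \tri \mu\hoc{\beta,\gamma}\bigr)$, respectively---by two applications of \cref{lem:mu-alpha-beta}, and \cref{lem:associativity} equates these two factorizations. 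The two composites therefore agree on every component of the cocone and hence globally. The main obstacle is the right unit law: not because it is conceptually subtle, but because the statement ``$\mu\hoc{\alpha,0}$ is the canonical identification'' is not stated in any previous lemma and requires its own short transfinite induction, distinguishing it from the left unit case where $\mu\hoc{0,\beta} = \id$ holds by the explicit base clause of \cref{lem:def-mu}.
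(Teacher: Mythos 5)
Your proof follows the paper's argument exactly: the unit laws via \cref{lem:mu-alpha-beta} at $\alpha=0$ (resp.\ $\beta=0$) together with the cocone presentation of $\yon\tri\free_p$, and associativity by expanding $\free_p\tri\free_p\tri\free_p$ as a triple colimit over $\alpha,\beta,\gamma<\kappa$ and combining \cref{lem:associativity} with two applications of \cref{lem:mu-alpha-beta}. Your auxiliary transfinite induction showing that $\mu\hoc{\alpha,0}$ is the canonical identification is a detail the paper elides with ``likewise,'' but it is correct and makes the right unit law fully explicit.
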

\begin{proof}
    First we show the left unit law. Since $\yon \tri \free_p$ is isomorphic to $\colim_{\beta < \kappa} (\yon \tri p\hoc{\beta})$, the left unit law follows from~\cref{lem:mu-alpha-beta} with $\alpha = 0$. Likewise, for the right unit law.

    Second we show associativity. Due to the isomorphism $$\free_p \tri \free_p \tri \free_p\cong\colim_{\alpha, \beta, \gamma < \kappa} \left(p\hoc{\alpha} \tri p \hoc{\beta} \tri p \hoc{\gamma}\right), $$ it suffices to show that for  $\alpha, \beta, \gamma < \kappa$, the outer diagram in the following commutes. 

    % https://q.uiver.app/#q=WzAsNyxbMCwwLCJwXFxob2N7XFxhbHBoYX0gXFx0cmkgcFxcaG9je1xcYmV0YX0gXFx0cmkgcCBcXGhvY3tcXGdhbW1hfSJdLFswLDEsInBcXGhvY3tcXGFscGhhICsgXFxiZXRhfSBcXHRyaSBwIFxcaG9je1xcZ2FtbWF9Il0sWzEsMSwicFxcaG9je1xcYWxwaGEgKyBcXGJldGEgKyBcXGdhbW1hfSJdLFsxLDAsInBcXGhvY3tcXGFscGhhfSBcXHRyaSBwIFxcaG9je1xcYmV0YSArIFxcZ2FtbWF9Il0sWzAsMiwiXFxmcmVlX3AgXFx0cmkgXFxmcmVlX3AiXSxbMiwyLCJcXGZyZWVfcCJdLFsyLDAsIlxcZnJlZV8gcCBcXHRyaSBcXGZyZWVfcCJdLFswLDEsIlxcbXVcXGhvY3tcXGFscGhhLCBcXGJldGF9IFxcdHJpIHAgXFxob2N7XFxnYW1tYX0iLDJdLFswLDMsInBcXGhvY3tcXGFscGhhfSBcXHRyaSBcXG11XFxob2N7XFxiZXRhLFxcZ2FtbWF9Il0sWzMsMiwiXFxtdVxcaG9je1xcYWxwaGEsIFxcYmV0YSArIFxcZ2FtbWF9Il0sWzEsMiwiXFxtdVxcaG9je1xcYWxwaGEgKyBcXGJldGEsIFxcZ2FtbWF9IiwyXSxbMiw1LCJcXGlvdGFcXGhvY3tcXGFscGhhICsgXFxiZXRhICsgXFxnYW1tYX0iLDJdLFs2LDUsIlxcbXUgIl0sWzQsNSwiXFxtdSIsMl0sWzEsNCwiXFxpb3RhXFxob2N7XFxhbHBoYSArIFxcYmV0YX0gXFx0cmkgXFxpb3RhIFxcaG9je1xcZ2FtbWF9IiwyXSxbMyw2LCJcXGlvdGFcXGhvY3tcXGFscGhhfSBcXHRyaSBcXGlvdGFcXGhvY3tcXGJldGEgKyBcXGdhbW1hfSJdXQ==
\[\begin{tikzcd}[column sep=huge]
	{p\hoc{\alpha} \tri p\hoc{\beta} \tri p \hoc{\gamma}} & {p\hoc{\alpha} \tri p \hoc{\beta + \gamma}} & {\free_ p \tri \free_p} \\
	{p\hoc{\alpha + \beta} \tri p \hoc{\gamma}} & {p\hoc{\alpha + \beta + \gamma}} \\
	{\free_p \tri \free_p} && {\free_p}
	\arrow["{\mu\hoc{\alpha, \beta} \tri p \hoc{\gamma}}"', from=1-1, to=2-1]
	\arrow["{p\hoc{\alpha} \tri \mu\hoc{\beta,\gamma}}", from=1-1, to=1-2]
	\arrow["{\mu\hoc{\alpha, \beta + \gamma}}", from=1-2, to=2-2]
	\arrow["{\mu\hoc{\alpha + \beta, \gamma}}"', from=2-1, to=2-2]
	\arrow["{\iota\hoc{\alpha + \beta + \gamma}}"', from=2-2, to=3-3]
	\arrow["{\freem{p} }", from=1-3, to=3-3]
	\arrow["\freem{p}"', from=3-1, to=3-3]
	\arrow["{\iota\hoc{\alpha + \beta} \tri \iota \hoc{\gamma}}"', from=2-1, to=3-1]
	\arrow["{\iota\hoc{\alpha} \tri \iota\hoc{\beta + \gamma}}", from=1-2, to=1-3]
\end{tikzcd}\]
The upper left-hand square commutes by ~\cref{lem:associativity} and the other squares commute by ~\cref{lem:mu-alpha-beta}.
\end{proof}

\subsection{The Monad $\free_p$ is Free}

Now that we have verified that $\free_p$ is indeed a monad, we want to justify calling it the \textbf{free monad} by giving a left adjoint  $\free_-\colon \poly \to \Mod(\poly)$  to the forgetful functor $U\colon \Mod(\poly) \to \poly$ which takes a $\tri$-monoid $(q, \eta_q, \mu_q)$ to its carrier $q$.

We begin by defining the action of $\free_-$ on morphisms.
Let $f\colon p \to q$ in $\poly$. We will define $\free_f \colon \free_p \to \free_q$  by inductively defining morphisms $f\hoc{\alpha} \colon p\hoc{\alpha} \to q \hoc{\alpha}$ such that for all $\alpha< \beta$ the following diagram commutes:
% https://q.uiver.app/#q=WzAsNCxbMCwwLCJwXFxob2N7XFxhbHBoYX0iXSxbMSwwLCJwXFxob2N7XFxiZXRhfSJdLFswLDEsInFcXGhvY3tcXGFscGhhfSJdLFsxLDEsInFcXGhvY3tcXGJldGF9Il0sWzAsMiwiZlxcaG9je1xcYWxwaGF9IiwyXSxbMSwzLCJmXFxob2N7XFxiZXRhfSJdLFswLDEsIlxcaW90YVxcaG9je1xcYWxwaGF9Il0sWzIsMywiXFxpb3RhXFxob2N7XFxhbHBoYX0iLDJdXQ==
\begin{equation}\label{eq:f-iota-commute}
\begin{tikzcd}
	{p\hoc{\alpha}} & {p\hoc{\beta}} \\
	{q\hoc{\alpha}} & {q\hoc{\beta}}
	\arrow["{f\hoc{\alpha}}"', from=1-1, to=2-1]
	\arrow["{f\hoc{\beta}}", from=1-2, to=2-2]
	\arrow["{\iota\hoc{\alpha}}", from=1-1, to=1-2]
	\arrow["{\iota\hoc{\alpha}}"', from=2-1, to=2-2]
\end{tikzcd}
\end{equation}

Define $f\hoc{0}\colon p\hoc{0} \to q\hoc{0}$ to be the identity on $\yon$. For a successor ordinal $\alpha + 1$, suppose that we have already defined $f\hoc{\alpha} \colon p \hoc{\alpha} \to q \hoc{\alpha}$. Then we define $f\hoc{\alpha + 1} \colon p \hoc{\alpha + 1} \to q \hoc{\alpha + 1}$ to be 
\[
    \yon + p \tri p\hoc{\alpha} \xrightarrow{ \yon + f \tri f\hoc{\alpha}} \yon + q \tri q \hoc{\alpha}.
\]
To show that the diagram in Equation~\eqref{eq:f-iota-commute} commutes for successor ordinals, it suffices to show that for all $\alpha$, the diagram with $\beta = \alpha + 1$ commutes. By induction this is immediate by the definitions of $\iota\hoc{\alpha + 1}$ and $f\hoc{\alpha + 1}$.

Suppose that $\alpha$ is a limit ordinal and that we have defined $f\hoc{\alpha'} \colon p \hoc{\alpha'} \to q \hoc{\alpha'}$ for all $\alpha' < \alpha$. For each $\alpha' < \alpha$ we have maps 
\[
    p\hoc{\alpha'} \xrightarrow{f\hoc{\alpha'}} q\hoc{\alpha'} \xrightarrow{\iota\hoc{ \alpha'}} q\hoc{\alpha}.
\] Since the diagram in Equation~\eqref{eq:f-iota-commute} commutes for all pairs of ordinals less than $\alpha$, these maps form a cocone into $q\hoc{\alpha}$. So, by the universal property of the colimit, $p\hoc{\alpha} = \colim_{\alpha' < \alpha} p \hoc{\alpha'}$, there is an induced map $f\hoc{\alpha}: p \hoc{\alpha} \to q\hoc{\alpha}$. That the diagram in Equation~\eqref{eq:f-iota-commute} commutes when $\beta$ is a limit ordinal follows from the uniqueness of the universal map. 

Let $\kappa$ be such that both $p$ and $q$ are $\kappa$-small. Then we define $\free_{f} \coloneqq f\hoc{\kappa}$. We defer the proof that $\free_{f}$ is a map of $\tri$-monoids to \cref{apx:free-monad}.

\begin{restatable}{proposition}{freef}
\label{prop:m_f}
    For $f \colon p \to q$ in $\poly$, the polynomial map $\free_f: \free_p \to \free_q$ is a map of $\tri$-monoids.
\end{restatable}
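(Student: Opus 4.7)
The plan is to verify the two monoid axioms for $\free_f$, namely unit preservation $\free_f\circ\freeu{p}=\freeu{q}$ and multiplication preservation $\free_f\circ\freem{p}=\freem{q}\circ(\free_f\tri\free_f)$. Fix $\kappa$ large enough that both $p$ and $q$ are $\kappa$-small, so $\free_p = p\hoc{\kappa}$ and $\free_q = q\hoc{\kappa}$.

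Unit preservation is essentially built in: $\freeu{p}$ is the composite $\yon = p\hoc{0} \xrightarrow{\iota\hoc{0}} p\hoc{\kappa}$, and since $f\hoc{0}$ is the identity on $\yon$, the naturality square \eqref{eq:f-iota-commute} applied at $\alpha=0$, $\beta=\kappa$ immediately yields $\free_f\circ\freeu{p}=\freeu{q}$.

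For multiplication preservation, the key lemma I would prove is the \emph{naturality of $\mu\hoc{\alpha,\beta}$ in the polynomial}: for all ordinals $\alpha,\beta$ and $f\colon p\to q$, the square
\[
\begin{tikzcd}[column sep=large]
p\hoc{\alpha}\tri p\hoc{\beta} \ar[r, "\mu\hoc{\alpha,\beta}^{(p)}"] \ar[d, "f\hoc{\alpha}\tri f\hoc{\beta}"'] & p\hoc{\alpha+\beta} \ar[d, "f\hoc{\alpha+\beta}"] \\
q\hoc{\alpha}\tri q\hoc{\beta} \ar[r, "\mu\hoc{\alpha,\beta}^{(q)}"'] & q\hoc{\alpha+\beta}
\end{tikzcd}
\]
commutes. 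I would prove this by transfinite induction on $\alpha$, mirroring the construction of $\mu\hoc{\alpha,\beta}$ in \cref{lem:def-mu}. The $\alpha=0$ case is trivial since $\mu\hoc{0,\beta}$ is the identity. For a successor $\alpha+1$, use that $p\hoc{\alpha+1}\tri p\hoc{\beta}$ splits as a coproduct $p\hoc{\beta}+p\tri p\hoc{\alpha}\tri p\hoc{\beta}$; check commutativity on each summand, using the inductive hypothesis on the second summand together with functoriality of $\tri$ and naturality of $\lambda\hoc{-}$ (which is just the coproduct inclusion). For a limit $\alpha$, both $\mu\hoc{\alpha,\beta}^{(p)}$ and $f\hoc{\alpha}$ are built from colimits over $\alpha'<\alpha$, so the square commutes by the universal property of the colimit applied to the inductively commuting squares at each $\alpha'$.

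With this naturality lemma in hand, the multiplication law follows by taking colimits over $\alpha,\beta<\kappa$: since
\[
\free_p\tri\free_p \cong \colim_{\alpha,\beta<\kappa}(p\hoc{\alpha}\tri p\hoc{\beta})
\]
(using \cref{prop:colim-left-distribute} and \cref{prop:colim-right-distribute}), and both $\free_f\circ\freem{p}$ and $\freem{q}\circ(\free_f\tri\free_f)$ agree on each component of this colimit by \cref{lem:mu-alpha-beta} combined with the naturality square above, the two composites are equal. The main obstacle will be the successor step of the transinductive proof: one has to carefully unpack the copair definition of $\mu\hoc{\alpha+1,\beta}$ and verify that $f\hoc{\alpha+1}=\yon+f\tri f\hoc{\alpha}$ intertwines correctly with both summands of the codomain decomposition; once this is done, the limit step and the passage to the colimit $\kappa$ are routine.
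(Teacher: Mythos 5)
Your proposal is correct and follows essentially the same route as the paper: unit preservation via the $\alpha=0$ instance of the square \eqref{eq:f-iota-commute}, and multiplication preservation via exactly the naturality square of $\mu\hoc{\alpha,\beta}$ with respect to $f$, proved by transfinite induction on $\alpha$ using the coproduct decomposition at successor stages. If anything, you are slightly more explicit than the paper, which omits the limit-ordinal step of that induction and leaves the passage to the colimit over $\alpha,\beta<\kappa$ implicit.
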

    
It is straightforward to show by transfinite induction that the action of $\free_-$ on morphisms is functorial.
Therefore,  \cref{prop:m_p} and \cref{prop:m_f} define a functor $\free_-: \poly \to \Mod(\poly)$. 

\begin{restatable}{theorem}{freeadjunction}
\label{thm:free-adjunction}
    There is an adjunction
    \[
        \adj{\poly}{\free_-}{U}{\Mod(\poly)}.
    \]
\end{restatable}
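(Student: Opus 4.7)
The plan is to exhibit the unit of the adjunction and verify the universal property by transfinite induction. The unit at $p$ is the polynomial map $\eta^{\tn{adj}}_p\colon p\to U(\free_p)$ obtained as the composite $p\cong p\tri p\hoc{0}\xrightarrow{\lambda\hoc{0}} p\hoc{1}\xrightarrow{\iota\hoc{1}}\free_p$, i.e., the right summand of the coproduct inclusion into $p\hoc{1}=\yon+p$ followed by the canonical map into the colimit. Naturality in $p$ is immediate from the inductive construction of $\free_f$. For the universal property I would show that, for every $\tri$-monoid $(q,\eta_q,\mu_q)$ and every polynomial map $f\colon p\to U(q)$, there exists a unique monoid map $\bar f\colon\free_p\to q$ with $U(\bar f)\circ\eta^{\tn{adj}}_p=f$.

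To construct $\bar f$ I would define morphisms $\bar f\hoc{\alpha}\colon p\hoc{\alpha}\to q$ transinductively, setting $\bar f\hoc{0}=\eta_q$; for the successor step using
\[
    p\hoc{\alpha+1}=\yon+p\tri p\hoc{\alpha}\xrightarrow{[\eta_q,\,\mu_q\circ(f\tri\bar f\hoc{\alpha})]} q;
\]
and at limit ordinals using the universal property of the colimit $p\hoc{\alpha}=\colim_{\alpha'<\alpha}p\hoc{\alpha'}$. A side induction, mirroring the arguments used for $\iota\hoc{\alpha}$ and $\mu\hoc{\alpha,\beta}$, shows that the $\bar f\hoc{\alpha}$ are compatible with the inclusions $\iota\hoc{\alpha}$, so they assemble into a map $\bar f\colon\free_p=\colim_{\alpha<\kappa}p\hoc{\alpha}\to q$.

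The main obstacle is checking that $\bar f$ is a $\tri$-monoid morphism. The unit law is trivial since $\bar f\circ\freeu{p}=\bar f\hoc{0}=\eta_q$ by construction. The multiplication law requires establishing the identity
\[
    \bar f\hoc{\alpha+\beta}\circ\mu\hoc{\alpha,\beta}=\mu_q\circ(\bar f\hoc{\alpha}\tri\bar f\hoc{\beta})
\]
for all ordinals $\alpha,\beta<\kappa$, after which the desired equality $\bar f\circ\freem{p}=\mu_q\circ(\bar f\tri\bar f)$ follows by taking colimits along the cocones of \cref{lem:mu-alpha-beta} (using \cref{prop:colim-left-distribute} and \cref{prop:colim-right-distribute} to commute $\tri$ past colimits on both sides). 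I would prove the displayed identity by transfinite induction on $\alpha$, paralleling the definition of $\mu\hoc{\alpha,\beta}$ in \cref{lem:def-mu}: the base case $\alpha=0$ uses the left unit law of $q$; the successor case $\alpha+1$ uses the copairing definition of $\mu\hoc{\alpha+1,\beta}$ together with the successor clause of $\bar f\hoc{\alpha+1}$ and associativity of $\mu_q$; the limit case follows from uniqueness of maps out of a colimit.

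Finally, uniqueness of $\bar f$ would be established by showing that any $\tri$-monoid morphism $g\colon\free_p\to q$ satisfying $U(g)\circ\eta^{\tn{adj}}_p=f$ must agree with $\bar f\hoc{\alpha}$ after precomposition with $\iota\hoc{\alpha}$ for every $\alpha$: the successor step uses that $g$ respects the unit (forcing the $\yon$ summand) and the multiplication (which, together with the hypothesis on $f$, forces the $p\tri p\hoc{\alpha}$ summand), while limit ordinals are handled by the uniqueness clause of the colimit universal property. Since the $p\hoc{\alpha}$ are jointly epic in $\free_p$, this forces $g=\bar f$, completing the proof.
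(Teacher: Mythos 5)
Your proof is correct, but it establishes the adjunction by a genuinely different (though closely related) route from the paper. The paper uses the unit--counit formulation: it constructs a counit $\epsilon_q\colon\free_q\to q$ for each $\tri$-monoid $q$ via a transfinite cocone $\epsilon\hoc{\alpha}\colon q\hoc{\alpha}\to q$ (with $\epsilon\hoc{0}=\eta_q$ and successor step $\yon+q\tri q\hoc{\alpha}\to q+q\tri q\xrightarrow{(1,\mu_q)}q$), proves it is a monoid map via exactly the identity you isolate, namely $\epsilon\hoc{\alpha+\beta}\circ\mu\hoc{\alpha,\beta}=\mu_q\circ(\epsilon\hoc{\alpha}\tri\epsilon\hoc{\beta})$ (its Lemma on $\epsilon$ and $\mu$), checks naturality of $\epsilon$, and then verifies both triangle identities. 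You instead verify the universal-arrow property: your $\bar f\hoc{\alpha}$ is precisely the paper's $\epsilon\hoc{\alpha}$ precomposed with $f\hoc{\alpha}$, so your key multiplication identity is the evident generalization of the paper's lemma and is proved by the same induction. What your route buys is that the triangle identities, the naturality of the counit, and even the functoriality of $\free_-$ come for free from uniqueness; what it costs is that you must argue uniqueness directly (your last paragraph, which is fine: $\iota\hoc{\alpha+1}\circ\lambda\hoc{\alpha}=\freem{p}\circ(\zeta_p\tri\iota\hoc{\alpha})$ by \cref{lem:mu-alpha-beta}, so a monoid map extending $f$ is pinned down on each $p\hoc{\alpha}$). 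Two small points you leave implicit: the one-line check that $U(\bar f)\circ\eta^{\tn{adj}}_p=f$, which needs the right unit law $\mu_q\circ(q\tri\eta_q)=\id_q$; and, since the theorem refers to the functor $\free_-$ already defined in the paper, a short induction showing that the left adjoint produced by your universal property has the same action on morphisms as the given $\free_f$. Neither affects correctness.
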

Therefore $\free_p$ is the \textbf{free monad} on the polynomial $p$ and $\free_-$ is the \textbf{free monad monad}. We defer the proof of \cref{thm:free-adjunction} and its requisite lemmas to \cref{apx:adjunction}.

Essentially we have shown that $p$ is $\kappa$-small implies that $p\hoc{\kappa}$ is \emph{the} free monad on $p$. Hence we have the following Corollary.

\begin{corollary}
    If $\kappa < \kappa'$ and $p$ is $\kappa$-small, then $p\hoc{\kappa}$  is isomorphic to $p \hoc{\kappa'}$.
\end{corollary}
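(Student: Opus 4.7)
The plan is to invoke the uniqueness of the free monad established in \cref{thm:free-adjunction}: once we know that $\free_p$ is characterized by a universal property, any two constructions of it must be canonically isomorphic. So it suffices to show that when $p$ is $\kappa$-small, the recipe $p\hoc{\kappa'}$ also realizes $\free_p$ for every $\kappa' > \kappa$.

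First I would verify that $\kappa$-smallness is monotone in $\kappa$: if $p$ is $\kappa$-small and $\kappa < \kappa'$, then $p$ is also $\kappa'$-small. The point is that every $\kappa'$-filtered category $\ja$ is automatically $\kappa$-filtered (the condition on cocones for subdiagrams of size $< \kappa$ is implied by the same condition for subdiagrams of size $< \kappa'$). Hence the comparison map $\colim_{j: \ja}(p \tri Q_j) \to p \tri (\colim_{j : \ja} Q_j)$ is an isomorphism on the sub-class of $\kappa'$-filtered diagrams, as desired. Consequently, \cref{def.free_monad} applies at both $\kappa$ and $\kappa'$, producing two candidates $p\hoc{\kappa}$ and $p\hoc{\kappa'}$ for $\free_p$, each equipped by \cref{prop:m_p} with a $\tri$-monoid structure.

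By \cref{thm:free-adjunction}, each of these $\tri$-monoids satisfies the universal property of the free monad on $p$: for every $\tri$-monoid $q$ and every $p \to q$ in $\poly$, there is a unique monoid map out of it extending the given map. Applying this universal property on the $\kappa$-side to the composite $p \to p\hoc{\kappa'}$ of the canonical inclusion (namely $\iota\hoc{1} \circ \lambda\hoc{0}\colon p \to p\hoc{1} \to p\hoc{\kappa'}$, pre-composed with the appropriate coprojection) yields a monoid map $p\hoc{\kappa} \to p\hoc{\kappa'}$, and symmetrically one in the other direction. Both composites agree with the unique endomorphism of the free monad extending the canonical inclusion $p \to \free_p$, which must be the identity; hence the two maps are mutually inverse and $p\hoc{\kappa} \cong p\hoc{\kappa'}$ as $\tri$-monoids, and in particular as polynomials.

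The main substantive work---establishing the universal property---is already carried out in \cref{thm:free-adjunction}; once that is granted, the corollary is a purely formal consequence of the uniqueness of left adjoints. The one small subtlety is the monotonicity of $\kappa$-smallness, which as noted above reduces to the standard observation that $\kappa'$-filtered implies $\kappa$-filtered for $\kappa < \kappa'$.
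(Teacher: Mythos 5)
Your proof is correct and takes essentially the same route as the paper, which justifies the corollary by observing that $p\hoc{\kappa}$ is \emph{the} free monad on $p$ by \cref{thm:free-adjunction} (and hence so is $p\hoc{\kappa'}$, since $\kappa$-smallness is preserved when passing to larger ordinals), so the isomorphism follows from uniqueness of the object satisfying the universal property. Your explicit verification that $\kappa$-small implies $\kappa'$-small is a detail the paper leaves implicit; the paper also sketches, as a second route only, a direct transfinite-induction argument that the inclusion $\iota\hoc{\kappa'}\colon p\hoc{\kappa}\to p\hoc{\kappa'}$ is itself an isomorphism.
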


Alternatively, we can show directly via transfinite induction that under these hypotheses, the inclusion $ \iota\hoc{\kappa'} \colon p\hoc{\kappa} \to p \hoc{\kappa'}$ is an isomorphism. If $\kappa$ is a limit ordinal and $\kappa' = \kappa + 1$,  then its inverse is the map
\[
    p\hoc{\kappa + 1} = \yon + p \tri p\hoc{\kappa} = \yon + p \tri \colim_{\alpha < \kappa} p\hoc{\alpha}  \xrightarrow{\iso} \colim_{\alpha < \kappa} \left(\yon + p \tri p \hoc{\alpha}\right) = \colim_{\alpha < \kappa} p \hoc{\alpha + 1} = p \hoc{\kappa}.
\]
The isomorphism follows from Proposition~\ref{prop:colim-right-distribute}. The remainder of the induction  is straightforward. 

This corollary implies that the natural map 
\[\free_p \To{\iso} \yon + p \tri \free_p.\] 
is an isomorphism. In terms of other words, a $p$-shaped interview is equivalent to either no interview or a question in $p(1)$ and for every answer, another $p$-shaped interview.

\section{Interactions Between Free Monad and Cofree comonad}\label{sec:module}
\subsection{The Cofree Comonad Comonad}

It is a beautiful fact that in $\poly$ the $\tri$-comonoids are categories and $\tri$-comonoid maps are cofunctors. Thus, we use $\catsharp$ to denote the category of $\tri$-comonoids and their maps. Dual to the construction of the free monad monad in Section~\ref{sec:free-monad}, here we define the cofree comonad and show that it is right adjoint to the forgetful functor $U\colon\catsharp\to\poly$ given by $U(c,\epsilon,\delta)\coloneqq c$. In \cref{app.cofree} we prove the statements presented in this Section.

\begin{restatable}{proposition}{propcofree}
\label{prop:cofree}
    There is a functor $\cofree_-\colon\poly\to\poly$ such that $\cofree_p$ has the structure of a $\tri$-comonoid for each $p:\poly$,
\[
    \cofree_p\to\yon
    \qqand
    \cofree_p\to\cofree_p\tri\cofree_p.
    \qedhere
\]
\end{restatable}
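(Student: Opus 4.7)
The strategy is to mirror the construction of \cref{def.free_monad}, swapping coproducts with products and colimits with limits throughout. Define a transfinite tower of polynomials $\{p\coh{\alpha}\}_\alpha$ together with projection maps $\pi\coh{\alpha}\colon p\coh{\alpha+1} \to p\coh{\alpha}$, by setting
\[
    p\coh{0} \coloneqq 1,\qquad p\coh{\alpha+1} \coloneqq \yon \times (p \tri p\coh{\alpha}),\qquad p\coh{\alpha} \coloneqq \lim_{\alpha'<\alpha} p\coh{\alpha'} \text{ at limits.}
\]
The projections are built inductively: the unique $p\coh{1}\to 1$, then $\yon\times(p\tri \pi\coh{\alpha})$ at successors, and the canonical maps out of the limit at limits. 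We then take $\cofree_p$ to be the value of this tower at a sufficiently large ordinal at which it stabilizes. Concretely, the positions of $\cofree_p$ are the (possibly infinite) $p$-shaped trees and the directions at a tree are its finite root-to-node paths.

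The counit $\epsilon_p\colon \cofree_p\to\yon$ is the composite $\cofree_p\to p\coh{1} = \yon\times(p\tri 1)\to\yon$ projecting onto the first factor; intuitively it retains only the root. The comultiplication $\delta_p\colon \cofree_p\to\cofree_p\tri\cofree_p$ is constructed via the dual of \cref{lem:def-mu}: for all ordinals $\alpha,\beta$ one produces maps $\delta\coh{\alpha,\beta}\colon p\coh{\alpha+\beta}\to p\coh{\alpha}\tri p\coh{\beta}$ compatible with the projection tower, by transfinite induction on $\alpha$ and using the factorization $p\coh{\alpha+1+\beta} = \yon\times(p\tri p\coh{\alpha+\beta})$. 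Intuitively, $\delta\coh{\alpha,\beta}$ severs a $p$-tree at depth $\alpha$, returning its top truncation together with the forest of subtrees hanging off the frontier. Passing to the limit yields $\delta_p$.

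Functoriality of $\cofree_-$ is handled by the formal dual of the $f\hoc{\alpha}$ construction: given $f\colon p\to q$, define $f\coh{\alpha}\colon p\coh{\alpha}\to q\coh{\alpha}$ transfinitely, using $\yon\times(f\tri f\coh{\alpha})$ at successors and the universal property at limits, then take $\cofree_f$ to be the stabilized map. The comonoid axioms (counit and coassociativity) follow by mirror-imaging \cref{prop:m_p}: they reduce to commuting diagrams at each level $p\coh{\alpha}$, which are checked by induction and then passed to the limit.

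The main obstacle is that whereas the free monad construction interchanges colimits with $\tri$ freely via \cref{prop:colim-left-distribute} and \cref{prop:colim-right-distribute}, the functor $p\tri-$ does not preserve arbitrary limits in $\poly$. One must instead identify the restricted class of limits --- those of cofiltered towers whose transition maps are cartesian --- that $p\tri-$ does preserve, verify that the tower $\{p\coh{\alpha}\}$ lies in this class (so the $\pi\coh{\alpha}$ are cartesian, paralleling the cartesianness of the $\iota\hoc{\alpha}$), and establish a cofree analogue of $\kappa$-smallness under which the tower stabilizes. This cartesian-projection condition is the technical crux of the whole argument.
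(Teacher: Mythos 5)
Your construction is the same in outline as the paper's: an inverse tower $p\coh{\alpha+1}=\yon\times(p\tri p\coh{\alpha})$, counit given by projection to the bottom of the tower, comultiplication assembled from splitting maps $p\coh{\alpha+\beta}\to p\coh{\alpha}\tri p\coh{\beta}$ that sever a tree at a given depth, and functoriality defined levelwise. However, the step you single out as the ``technical crux'' is both unnecessary and, as stated, false. The projections $\pi\coh{\alpha}$ are \emph{not} cartesian: the directions of $p\coh{\alpha}$ at a truncated tree are its root-to-node paths of length at most $\alpha$, so the backward maps on directions are proper inclusions whenever $p$ has any directions at all (already $\pi\coh{1}$ induces the inclusion of the one-element set of root paths into the set $1+p[P]$ of paths of length at most one). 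That strict growth of direction-sets up the tower is exactly what produces the ``finite root-to-node paths'' you correctly identify as the directions of $\cofree_p$, so a cartesianness hypothesis on the transition maps cannot be the mechanism that makes the limit behave, and verifying it for your tower would fail.

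What actually makes the construction work is that $p\tri-$ preserves \emph{all} connected limits, cofiltered ones in particular, with no side condition: $p\tri q=\sum_{P:p(1)}\prod_{p[P]}q$, products commute with all limits, and coproducts commute with connected limits. This is the fact the paper invokes; it is the mirror image of \cref{prop:colim-right-distribute} but with no analogue of $\kappa$-smallness. Consequently $\yon\times(p\tri\lim_i p\coh{i})\cong\lim_i p\coh{1+i}$, so the plain $\omega$-indexed tower already stabilizes and computes $\cofree_p$ for \emph{every} $p$; the transfinite extension and the ``cofree analogue of $\kappa$-smallness'' you posit are not needed. This is a genuine asymmetry with the free monad, where $p\tri-$ preserves only $\kappa$-filtered colimits and transfinite iteration is unavoidable. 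Once your final paragraph is replaced by this observation, the rest of your argument goes through and coincides with the paper's proof (your choice of $p\coh{0}=1$ rather than $\yon$ merely shifts the tower by one step and is harmless).
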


For a polynomial functor $q: \poly$, the positions of the cofree comonad are \textbf{$q$-behavior trees}. These are defined coinductively as a position  $Q: q(1)$ and a map from directions of $q[Q]$ to $q$-behavior trees.

%\begin{example}
%Here is a table of $q$-behavior trees for several polynomials $q$ arising from sets $A,B:\smset$.
%\[
%\begin{array}{l|l|l}
%q:\poly&\cofree_q(1)&\text{Comment}\\\hline
%A&A&\text{$A$-constant trees: $A$-many roots, no branches}\\
%A\yon&A^\nn&\text{$A$-Streams: $A$-many roots, one branch each}\\
%A\yon+B&A^\nn+B\List(A)&\text{$A$-streams that can terminate in a $B$}\\
%A\yon^B&A^{\List(B)}&\text{$B$-branching $A$-trees: print an $A$ and branch by $B$}
%\end{array}
%\]
%\end{example}
%
\begin{restatable}{theorem}{cofreeadjunction}
\label{thm:cofree_comonad_comonad}
    There is an adjunction
\[
\adj{\catsharp}{U}{\cofree_-}{\poly}.
\]
\end{restatable}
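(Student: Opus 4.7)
The plan is to dualize the proof of \cref{thm:free-adjunction} and exhibit, for every $\tri$-comonoid $c$ and polynomial $q$, a natural bijection
\[
    \Hom_{\poly}(U(c), q) \;\cong\; \Hom_{\catsharp}(c, \cofree_q).
\]
The construction of $\cofree_q$ in \cref{prop:cofree}, dual to Section~\ref{sec:free-monad}, expresses it as a limit $\lim_\alpha q\coh{\alpha}$ of a cotower whose successor stages satisfy the coinductive equation $q\coh{\alpha+1}\cong\yon\times(q\tri q\coh{\alpha})$, with the counit $\cofree_q\to\yon$ and comultiplication $\cofree_q\to\cofree_q\tri\cofree_q$ induced stagewise.

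First I would identify the counit $\epsilon_q\colon U(\cofree_q)\to q$: on positions, send a behavior tree $T$ to its root label, and on directions send each direction of $q$ at the root to the corresponding first-level edge of $T$ (a length-one path in $\cofree_q[T]$). Post-composition with $\epsilon_q$ gives $\Phi\colon\Hom_{\catsharp}(c,\cofree_q)\to\Hom_{\poly}(U(c),q)$. For the inverse, given $g\colon U(c)\to q$, coinductively construct $\Psi(g)=\wt g\colon c\to\cofree_q$ as a compatible family $\wt g\coh{\alpha}\colon c\to q\coh{\alpha}$: start from the unique map to the terminal polynomial; at successor stages, pair $\epsilon_c\colon c\to\yon$ with
\[
    c \xrightarrow{\delta_c} c\tri c \xrightarrow{g\tri\wt g\coh{\alpha}} q\tri q\coh{\alpha}
\]
to land in $q\coh{\alpha+1}$; at limits, use the universal property of the cotower. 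That $\Phi\Psi=\id$ is immediate because $\epsilon_q\circ\wt g$ only inspects the first stage, which reproduces $g$; conversely, for a cofunctor $h\colon c\to\cofree_q$ with $g\coloneqq\epsilon_q\circ U(h)$, show by transfinite induction that $\wt g\coh{\alpha}=\pi\coh{\alpha}\circ U(h)$ for the cotower projections $\pi\coh{\alpha}$, using that $h$ intertwines $\delta_c$ with the comultiplication of $\cofree_q$.

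The main obstacle will be verifying that $\wt g$ is itself a cofunctor---that it respects both the counit and the comultiplication of $c$. The counit compatibility is immediate from the $\yon$-factor at each successor stage. The comultiplication compatibility is a transfinite induction mirroring \cref{lem:associativity} and the associativity step in \cref{prop:m_p}: at a successor stage, the square expressing compatibility with $\delta_c$ reduces, via coassociativity of $\delta_c$ and the recursive definition of the comultiplication on $\cofree_q$, to the same square at the previous stage; at limit stages compatibility is detected by the cotower projections and hence follows from the universal property of the limit. Once this is in place, naturality in both arguments follows from functoriality of $\cofree_-$ and uniqueness of the induced maps.
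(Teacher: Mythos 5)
Your proposal matches the paper's proof in all essentials: the counit is the projection of $\cofree_q$ onto its stage-one approximation, and your transpose $\wt{g}$ is precisely the paper's unit $\eta_c$ (recovered at $g=\id$) built stagewise by pairing $\epsilon_c$ with $\delta_c$ followed by $g\tri\wt{g}\coh{i}$, with the cofunctoriality of this map being the main burden in both arguments. The only discrepancies are cosmetic: the paper's cotower for $\cofree_q$ is indexed by $\nn$ rather than by ordinals (no limit stages beyond $\omega$ are needed, since $\tri$ commutes with connected limits), and the paper verifies the triangle identities for an explicit unit and counit rather than the hom-set bijection directly.
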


Hence $\cofree_q$ is the \textbf{cofree comonad} on the polynomial $q$ and $\cofree_-$ is the \textbf{cofree comonad comonad}. 

\subsection{The Module Structure $\free_p \otimes \cofree_q \to \free_{p \otimes q}$}\label{sec.module}

For polynomials $p$ and $q$ consider the map 
\[
    p \otimes \cofree_q \to p \otimes q \to \free_{p \otimes q}
\]
where the first map is induced by the counit of $\cofree_q$ and the second map is the unit of $\free_{p\otimes q}$. This composite induces a map of polynomials $p \to [\cofree_q, \free_{p \otimes q}]$. By duoidality of $\otimes$ and $\tri$, the internal hom $[\cofree_q, \free_{p \otimes q}]$ is a $\tri$-monoid as well. Therefore, by the adjunction in \cref{thm:free-adjunction} the map of polynomials $p \to [\cofree_q, \free_{p \otimes q}]$ induces a map of $\tri$-monoids  $\free_p \to [\cofree_q, \free_{p \otimes q}]$, which is equivalent to a polynomial map
\[
    \modstruct_{p,q}\colon \free_p \otimes \cofree_q \to \free_{p \otimes q}.
\]
This map is the \textbf{free monad-comonad interaction law} described in~\cite[Section 3.3]{Katsumata2020interaction}. In \cref{apx:module} we prove the statements presented in this Section.

\begin{restatable}{proposition}{naturalinteraction}
    The maps $\modstruct_{p,q}$ are natural in $p$ and $q$.
\end{restatable}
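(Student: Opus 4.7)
The plan is to unwind the construction of $\modstruct_{p,q}$ into three adjunction transposes and observe that each step is natural in both $p$ and $q$. Let $\alpha_{p,q}\colon p\otimes\cofree_q\to\free_{p\otimes q}$ denote the seed map obtained by composing the cofree counit $\cofree_q\to q$ with the free unit $p\otimes q\to\free_{p\otimes q}$. Since the cofree counit is natural in $q$, the free unit is natural in $p\otimes q$, and $\otimes$ is bifunctorial, $\alpha$ is natural in both variables. Then $\modstruct_{p,q}$ arises from $\alpha_{p,q}$ by (i) transposing across the tensor-hom adjunction to obtain $\hat\alpha_{p,q}\colon p\to[\cofree_q,\free_{p\otimes q}]$; (ii) extending along the free monad adjunction of \cref{thm:free-adjunction} to a $\tri$-monoid map $\bar\alpha_{p,q}\colon\free_p\to[\cofree_q,\free_{p\otimes q}]$, using the $\tri$-monoid structure on the internal hom induced by duoidality; (iii) transposing back.

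For naturality in $p$, given $f\colon p\to p'$, I would transpose the desired square $\modstruct_{p',q}\circ(\free_f\otimes\cofree_q)=\free_{f\otimes q}\circ\modstruct_{p,q}$ into an equation between two $\tri$-monoid maps $\free_p\to[\cofree_q,\free_{p'\otimes q}]$. By the universal property of $\free_p$, equality reduces to checking agreement after precomposing with $\freeu{p}\colon p\to\free_p$, which by the triangle identity collapses to $\hat\alpha$ composed with the appropriate internal-hom map. Transposing back to the $\otimes$ side, the equation to verify is precisely $\alpha_{p',q}\circ(f\otimes\cofree_q)=\free_{f\otimes q}\circ\alpha_{p,q}$, i.e.\ naturality of $\alpha$ in $p$.

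For naturality in $q$, given $g\colon q\to q'$, the argument is analogous but trickier because $[\cofree_q,\free_{p\otimes q}]$ depends on $q$ through both slots---contravariantly via $\cofree_g$ in the first slot and covariantly via $\free_{p\otimes g}$ in the second. After transposing to the hom side, the two $\tri$-monoid maps $\free_p\to[\cofree_q,\free_{p\otimes q'}]$ in question both come from $\hat\alpha$ by pre- or post-composition with the induced internal-hom arrows. Again invoking the universal property of $\free_p$ and transposing back, the claim reduces to $\alpha_{p,q'}\circ(p\otimes\cofree_g)=\free_{p\otimes g}\circ\alpha_{p,q}$, which is naturality of $\alpha$ in $q$. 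The main obstacle throughout will be the bookkeeping in step (ii): verifying that the maps between internal homs induced by $f$ and $g$ are themselves $\tri$-monoid morphisms, so that the free monad adjunction may be legitimately invoked. This is a formal consequence of the compatibility between duoidality and the free/cofree structures, but it must be stated carefully; once in place, naturality of $\modstruct$ reduces entirely to the naturality of the seed $\alpha$.
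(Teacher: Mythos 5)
Your proposal is correct and takes essentially the same route as the paper: both reduce naturality of $\modstruct_{p,q}$, via the tensor--hom and free-monad adjunctions, to naturality of the seed composite $p\otimes\cofree_q\to p\otimes q\to\free_{p\otimes q}$, which holds by naturality of the cofree counit and the free unit. You merely spell out the transposition bookkeeping (universal property of $\free_p$, the $\tri$-monoid structure on the internal hom) that the paper leaves implicit, so no further comparison is needed.
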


Note that $\cofree$ is lax monoidal (see \cref{prop:cofree-monoidal})
\[
\cofree_p\otimes\cofree_q\to\cofree_{p\otimes q}
\qqand
\yon\to\cofree_{\yon}
\]
so in this sense we would say "matter can also take the place of pattern."

Recall from \cite{nlab:module_over_a_monoidal_functor} the notion of a module over a monoidal functor.

\begin{restatable}{theorem}{module}\label{thm.main}
    There is a left-module over $\cofree_-: (\poly, \otimes, \yon) \to (\poly, \otimes, \yon)$ consisting of:
    \begin{itemize}
        \item $\poly$ as a left module category over $(\poly, \otimes, \yon)$.
        \item The functor $\free_- \colon \poly \to \poly$.
        \item The natural transformation $\modstruct\colon \free_- \otimes \cofree_- \Rightarrow \free_{- \otimes -}$.
    \end{itemize}
\end{restatable}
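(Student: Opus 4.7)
The plan is to verify the two coherence axioms---associativity and unit---for the module action $\modstruct$ by exploiting the same adjunction template used to define it in the first place. The key observation is that by $(\otimes, \tri)$-duoidality of $\poly$, whenever $m$ is a $\tri$-monoid the internal hom $[x, m]$ with respect to $\otimes$ inherits a $\tri$-monoid structure; hence transposing across the closed-structure adjunction $\otimes \dashv [-,-]$ converts equalities of polynomial maps $\free_p \otimes X \to \free_Y$ into equalities of $\tri$-monoid maps $\free_p \to [X, \free_Y]$. By the freeness statement in \cref{thm:free-adjunction}, such an equality of $\tri$-monoid maps holds iff it holds after precomposition with $\freeu{p}\colon p \to \free_p$, which reduces each coherence axiom to an identity between polynomial maps out of $p$ itself.

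For associativity, I would consider the square
\[
\begin{tikzcd}[column sep=large, ampersand replacement=\&]
\free_p \otimes \cofree_q \otimes \cofree_r \ar[r, "{\modstruct_{p,q} \otimes \cofree_r}"] \ar[d] \& \free_{p \otimes q} \otimes \cofree_r \ar[d, "{\modstruct_{p \otimes q, r}}"] \\
\free_p \otimes \cofree_{q \otimes r} \ar[r, "{\modstruct_{p, q \otimes r}}"'] \& \free_{p \otimes q \otimes r}
\end{tikzcd}
\]
whose left-hand vertical is induced by the laxator of $\cofree$ from \cref{prop:cofree-monoidal}. After transposition, both composites become $\tri$-monoid maps $\free_p \to [\cofree_q \otimes \cofree_r, \free_{p \otimes q \otimes r}]$, and freeness reduces their equality to the polynomial identity obtained by precomposing with $\freeu{p}$ and transposing back. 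The resulting identity compares two polynomial maps $p \otimes \cofree_q \otimes \cofree_r \to \free_{p \otimes q \otimes r}$; unfolding the definition of $\modstruct$ shows both composites factor as $p \otimes \cofree_q \otimes \cofree_r \to p \otimes q \otimes r \xrightarrow{\freeu{p \otimes q \otimes r}} \free_{p \otimes q \otimes r}$, where the first leg uses the counits of $\cofree$. The remaining comparison is exactly the compatibility of the laxator of $\cofree$ with its counits, which is part of the data of $\cofree$ being a lax monoidal comonad. The unit axiom is handled identically: the triangle $\free_p \otimes \yon \to \free_p \otimes \cofree_\yon \to \free_{p \otimes \yon} \cong \free_p$ collapses to the polynomial identity $\yon \to \cofree_\yon \to \yon = \id_\yon$, which is another lax monoidal axiom of $\cofree$.

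The main obstacle is the bookkeeping needed to ensure that the transposed maps in each square are genuinely $\tri$-monoid homomorphisms, so that freeness applies. This requires unpacking how duoidality upgrades $[-, \free_Y]$ into a $\tri$-monoid-valued functor and checking that each leg of the associativity square respects the multiplication and unit of $\free_p$---a task made tractable because $\modstruct$ is itself defined from a $\tri$-monoid map, and both laxator- and counit-composition preserve such maps. Once these preservation checks are in place, both module axioms reduce mechanically to the lax monoidal axioms of $\cofree$, with the universal property of $\free_p$ doing the remaining work.
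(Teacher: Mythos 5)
Your proposal is correct and follows essentially the same route as the paper's proof: both reduce each module axiom, via the freeness of $\free_p$ (equivalently, precomposition with the unit $p\to\free_p$ after transposing across $\otimes\dashv[-,-]$ and duoidality), to a diagram of plain polynomial maps out of $p$ tensored with cofree comonads, which then commutes by the definition of the laxator of $\cofree_-$, the identity $\yon\to\cofree_\yon\to\yon=\id_\yon$, and the naturality of the unit of the free-monad adjunction. You in fact make explicit the ``it suffices to show'' step that the paper leaves implicit, namely that both legs are transposes of $\tri$-monoid maps so that freeness applies.
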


\section{Applications}\label{sec:apps}
In this section, we will give four applications of the module structure introduced in Section~\ref{sec:module}. Each consists of:
\begin{itemize}
    \item A pattern of type $p$, represented by a map into $\free_p$.
    \item Matter of type $q$, represented by a map into $\cofree_q$.
    \item A \textit{runs on} map $p \otimes q \to r$.
\end{itemize}
Then the composite $\free_p \otimes \cofree_q \to \free_{p \otimes q} \to \free_r$ represents the interaction \textit{pattern runs on matter}.

\subsection{Interviews Run on People}

\newcommand{\tea}{\mathrm{Tea?}}
\newcommand{\kind}{\mathrm{Kind?}}
\newcommand{\yes}{\mathrm{yes}}
\newcommand{\no}{\mathrm{no}}
\newcommand{\green}{\mathrm{green}}
\newcommand{\black}{\mathrm{black}}
\newcommand{\herbal}{\mathrm{herbal}}

As in Section~\ref{sec:free-monad}, suppose that the positions of $p$ are questions and the directions are possible answers to each question. For example consider the polynomial,
\(
    p = \{\tea\} \yon ^{\{\yes, \no\}} + \{\kind\} \yon ^{\{\green, \black, \herbal\}}
\) which we can view as the corolla forest: 
\[
\begin{tikzpicture}[trees, 
  level distance=10mm,
  level 1/.style={sibling distance=15mm},
  level 2/.style={sibling distance=2.5mm},
  level 3/.style={sibling distance=1.25mm}]
	\node (a) {\small$\tea$}
    	child {edge from parent node[left] {\tiny$\yes\quad $}}
    	child {edge from parent node[right] {\tiny$\quad\no$}}
		;
        \node[right = 1in of a] (b) {\small $\kind$}
		  child {edge from parent node[left] {\tiny $\green\quad $}}
		  child {edge from parent node[left] {\tiny $\black $}}
            child {edge from parent node[right] {\tiny$\quad\herbal $}}
		;
\end{tikzpicture}
\]
Essentially, $p$ consists of the questions:
\begin{itemize}
    \item ``Do you want tea?'', with possible answers ``yes'' or ``no''.
    \item ``What kind of tea do you like?'', with possible answers ``green'', ``black'', or ``herbal''.
\end{itemize}
Consider the pattern $\yon \to \free_p$ which selects the following interview:
\[
\begin{tikzpicture}[trees, 
  level 1/.style={sibling distance=20mm},
  level 2/.style={sibling distance=7.5mm},
  level 3/.style={sibling distance=2.5mm}]
	\node {\small $\tea$}
    	child {node {\small $\kind$}
                child
                child
                child
            } 
    	child {node {\small $\tea$}
                child {node {\small $\kind$}
                    child
                    child
                    child
                }
                child 
            }
		;
\end{tikzpicture}
\]

The polynomial $[p,\yon] = \left(\prod_{P:p(1)} p[P]\right) \yon ^{p(1)}$ is the \textbf{universal answerer} for the polynomial $p$. Its positions are a choice of answer for each question in $p(1)$ and its directions are the questions $p(1)$. A person is a map $\yon \to \cofree_{[p, \yon]}$. In other words, a person is a behavior tree in which a node is an answer for each question and a branch is one such question. Each answer may depend on the sequence of questions asked so far.  Such a map can always be factored as $\yon \to S\yon ^S \to \cofree_{[p, \yon]}$. In other words, a person consists of (1) a dynamical system which outputs answers to $p$ questions and inputs $p$ questions and (2) an initial condition.

Note  \cite[Section 2.5]{Katsumata2020interaction} refers to $[p, \yon]$ as the dual of $p$.  We will see in \cref{subsec.games} that it is also interesting to consider generalizations of this duality to $[p,t]$ for any polynomial monad $t$. Indeed, for any monad $t$, there is a map
\begin{equation}\label{eqn.generalize_dual}
	\free_p\otimes\cofree_{[p,t]}\to\free_t\to t
\end{equation}

The evaluation map $p \otimes [p, \yon] \to \yon$ describes how to run a question on a universal answerer. Given an interview and a person we get the composite
\[
    \yon \to \free_p \otimes \cofree_{[p, \yon]} \to \free_{p \otimes [p, \yon]} \to \free_\yon = \nn \yon, 
\]
which sends the single position of $\yon$ to the number of questions asked when the interview is run on the person. For example, consider Alice who always responds that she does not want tea and she likes herbal tea. Running the tea interview on Alice results in two questions: ``$\tea$'' then ``$\tea$''. Conversely, consider Bob who  at first politely declines  tea and likes black tea. Then after the first question, he responds that he actually does want tea and he likes black tea. Running the tea interview on Bob results in three questions: ``$\tea$'', ``$\tea$'', then ``$\kind$''. 
% The behavior trees for Alice and Bob correspond to the state machines below with intitial conditions indicated. The function $h_N$ (resp. $b_N$ and $b_Y$) maps "$\tea$" to "$\no$" (resp. "$\no$" and "$\yes$") and "$\kind$" to "$\herbal$" (resp. "$\black$" and "$\black$").

% \begin{center}
%     \input{figures/tea-alice}\hspace{1in}
%     \input{figures/tea-bob}
% \end{center}

\subsection{Programs Run on Operating Systems}\label{sec:programs}

Consider the following program.
\begin{minted}{python}
def guessing_game(max_guesses, goal):
    if max_guesses==0: 
        return False
    guess=read()
    if guess==goal:
        return True
    return guessing_game(max_guesses-1, goal)
\end{minted}

We represent the argument/return type with the polynomial $r = \sum_{m: \nn, g: \nn} \yon^{\Bool}$ where $m$ represents the variable \mintinline{python}{max_guesses} and $g$ represents the variable \mintinline{python}{goal}. We represent the effect type that reads in natural numbers with the polynomial $p = \yon^\nn$. 
Finally, we will represent this program with a map $r\to \free_p$ that we  define inductively using the decomposition $r = \sum_{m: \nn} r_m$ with $r_m = \sum_{g: \nn} \yon^\Bool$. We start by defining the following maps:
\begin{itemize}
    \item Define a map $r_0 \to \yon$. Each position of $r_0$ is sent to the single position of $\yon$. The single direction of $\yon$ is sent to $\texttt{False}: \Bool$. 
    \item Define a map $r_{m+1} \to p \tri (\yon + r_{m})$.  On positions, for each position $g: r_{m +1}(1)$, we need a  function from $\nn$ to the positions of $\yon + r_{m}$. In particular, we send $g$ to the map sending $g' \colon \nn$ to $\yon$ if $g = g'$ and to the position $g: r_{m}(1)$ otherwise. Then on directions, if $g = g'$, the single direction of $\yon$ is sent to $\texttt{True}: \Bool$ and if $g \neq g'$, then we use the identity on directions. 
\end{itemize}
Now we will define maps $r_m \to \free_p$ inductively using the isomorphism $\free_p \iso \yon + p \tri \free_p$ and the inclusion $\iota\hoc{0}\colon \yon \to \free_p$. As a base case, we have the composite
\(
    r_0 \to \yon \xrightarrow{\iota\hoc{0}} \free_p.
\)Given $r_m \to \free_p$, we define
\[
    r_{m+1} \to p \tri (\yon + \free_p) \xrightarrow{p\tri (\iota\hoc{0}, \free_p)} p \tri \free_p \to \free_p.
\]
A operating system with effects in $[p, \yon] \cong \nn \yon$ is a map $\yon \to \cofree_{[p, \yon]}\cong(\nn\yon)^\nn$. It consists of a stream of natural numbers, which are the responses it will give to the \mintinline{python}{read()} effect.

Using the interaction $\modstruct_{p, [p, \yon]}$ and the evaluation map $p \otimes [p, \yon] \to \yon$, we get the composite
\[
    r \cong r \otimes \yon \to \free_p \otimes \cofree_{[p,\yon]} \to \free_{p \otimes [p, \yon]} \to \free_{\yon} \cong \nn \yon,
\]
which expresses how the program runs on the chosen operating system. On positions it maps $(m:\nn, g:\nn)$ to the minimum of $m$ and the number of responses  the operating system takes to guess the goal $g$. On directions, it maps the single direction to \texttt{True} if the goal was guessed in at most $m$ guesses and to \texttt{False} otherwise.

\subsection{Voting Schemes Run on Voters}

For a finite set of candidates $X$, consider the polynomial $p = \sum_{A \subseteq X}\yon^A$. A position of $p$ is a ballot consisting of some subset of the candidates.  For such a position, its  directions are possible winners of the ballot. A voting scheme with $M$ voters is a map $p \to \free_{\bigotimes_M p}$ where $\bigotimes_0p\coloneqq \yon$ and $\bigotimes_{M+1}p\coloneqq p\otimes\bigotimes_Mp$. On positions, a subset of candidates $A \subseteq X$ maps to a terminating decision tree in which each node is a personalized ballot given to each of the $M$ voters and each branch corresponds to the tuple  of each voter's selection. On directions, each leaf of this decision tree  maps to an overall winner. 

\textbf{Exhaustive run-off} is a voting scheme in which each voter selects their preference from the $A$ candidates, and then the candidates with the fewest number of votes are eliminated. If only a single candidate remains, then they are elected the winner. Otherwise, another round of voting proceeds with the remaining candidates. This voting scheme can be encoded into a polynomial map $p \to \free_{\bigotimes_M p}$. As in Section~\ref{sec:programs} we define the map inductively. Consider the decomposition $p = \sum_{n = 0}^{\#X} p_n$ where $p_n = \sum_{A \subseteq X, \#A = n} \yon^A$. Then consider the following maps:
\begin{itemize}
    \item Note that $p_0 = 1$ is isomorphic to $\bigotimes_M p_0 = \bigotimes_M 1$. There is an inclusion of $p_0$ into $p$ and hence a map $p_0 \iso \bigotimes_M p_0 \to \bigotimes_M p$.
    \item There is a unique map $p_1 \to \yon$.
    \item Define $p_{n + 1} \to \left(\bigotimes_{M} p\right) \tri \left(\sum_{k = 0}^{n} p_k\right)$ as follows. On positions $A \subseteq X$ maps to the product $\prod_M A: \prod_{M} p(1)$ and the map $\prod_{M} A \to \sum_{k = 0}^n p_k(1)$ defined by \[
    (a_1,\cdots, a_M) \mapsto A' \coloneqq A \setminus \argmin \#_{(a_1, \cdots, a_n)}\] where $\#_{(a_1, \cdots, a_n)}\colon A \to \nn$ counts the number of votes for each candidate and so $\argmin \#_{(a_1, \cdots, a_n)}$ is the \emph{set} of candidates with the fewest votes. On directions, it is the inclusion $A' \to A$.
\end{itemize}
Now, we define $p \to \free_{\bigotimes_M p}$ inductively as follows. As base cases we have the composites
\[
    p_0 = 1 = \bigotimes_M 1 \to \bigotimes_M p \to \free_{\bigotimes_M p}
\qqand
    p_1 \to \yon \to \free_{\bigotimes_M p}.
\]
Given a maps $p_k \to \free_{\bigotimes_M p}$ for $k < n+1$, we define
\[
    p_{n +1} \to \left(\bigotimes_{M} p\right) \tri \left(\sum_{k = 0}^{n} p_k\right) \to \free_{\bigotimes_M p} \tri \left(\sum_{k = 0}^{n} \free_{\bigotimes_M p}\right) \to \free_{\bigotimes_M p} \tri \free_{\bigotimes_M p} \xrightarrow{\mu} \free_{\bigotimes_M p}.
\]

A  voter selects a candidate from every subset of candidates. Such a voter is represented by a polynomial map $\yon \to [p, \yon]$. Running $M$ elections on $M$ voters is represented by the composite 
\(
    \left(\bigotimes_M p\right) \otimes \left(\bigotimes_M [p, \yon] \right)  \iso  \bigotimes_M  (p \otimes [p, \yon]) \to \bigotimes_M \yon = \yon.
\) Therefore, given $M$ voters, we get a composite
\[
    p \to \free_{\bigotimes_M p} \otimes \left(\bigotimes_M [p, \yon]\right) \to \free_{\bigotimes_M p} \otimes \cofree_{\bigotimes_M [p, \yon]} \to \free_\yon = \nn \yon.
\]
On positions it maps a set of candidates $A \subseteq X$ to the number of run-offs required to elect a candidate. On directions, it maps the single direction of $\yon$ to the winner.

It is tempting to expect that the maps $p \to \free_{\bigotimes_M p}$ define an operad enriched in the Kleisli category $\poly_{\free}$ in the sense of~\cite{shapiro2022dynamic}. However exhaustive run-off is gerrymander-able meaning that the division of voters into districts can affect the end-result of the election. This observation suggests the following definition of gerrymandering.

\begin{definition}
    A voting scheme $p \to \free_{\bigotimes_M p}$ can be \textbf{gerrymandered} if and only if it does not extend to an operad enriched in $\poly_{\free}$.
\end{definition}

\subsection{Games Run on Players}\label{subsec.games}

\newcommand{\xs}{\times}
\newcommand{\os}{{\bigcirc}}
\newcommand{\ds}{-}
\newcommand{\ttt}{T}

For a game such as tic-tac-toe, let $p$ be the polynomial whose positions are game states and whose directions are next possible moves. Then we can represent the game play as a position in $\free_p$. In the game tic-tac-toe, a  game state is a placement of $\xs$'s and $\os$'s on a $3\times 3$ grid. In other words, it is a map $b: 9 \to \{\xs, \os, \ds\}$ where $\ds$ represents an open grid position.  For $m = 1,\ldots, 9$, let $B_m$ be the set of valid board states with $m$ open positions. Assuming that $\xs$ always plays first, these are
\[
  B_m \coloneqq \{b: 9 \to \{\xs, \os, \ds\} \mid \#(b\inv(\ds)) = m, \#(b\inv(\os)) \leq \#(b \inv(\xs)) \leq \#(b\inv(\os)) + 1\}.
\] If there are an odd number of open positions, then it is $\xs$'s turn. Otherwise, it is $\os$'s turn.

Given a board state $b: B_m$, the next possible moves are the open positions $b\inv(\ds)$. Therefore,
the polynomial $p_\xs = \sum_{b: B_1 + B_3 + \cdots + B_9} \yon ^{b\inv(\ds)}$ represents the board states and next possible moves for $\xs$. Likewise the polynomial  $p_\os = \sum_{b: B_2 + B_4 + \cdots + B_8} \yon^{b\inv(\ds)}$ represents the board states and next possible moves for $\os$.  

Consider the polynomial $p = p_\xs \tri (\yon + p_\os)$. This polynomial represents an $\xs$ move followed by either game over or an $\os$ move. There exists a polynomial map $\ttt \colon \yon^{\{\xs, \os, \ds\}} \to \free_p$ which selects the decision-tree in $\free_p$ corresponding to the rules of tic-tac-toe, and maps directions to the winner of a completed game or $\ds$ if the game is tied.
We will define the map $\ttt$ inductively. 
For $m = 1,\cdots, 9$, let $r_m = B_m \yon^{\{\xs, \os, \ds\}}$. 
\begin{itemize}
    \item Define $r_1 \to \yon$, as follows. Given a board $b: B_1$ with a single open position, send the single direction in $\yon$ to either the winner or to $\ds$ if the game is tied.
    \item Define $r_{m+1} \to p_\os \tri(\yon + r_m)$ for odd $m$, and define $r_{m+1} \to p_\xs \tri (\yon + r_m)$ for even $m$, as follows. On positions send a board state $b: B_{m+1}$ to itself in $p_\os(1)$ (resp. $p_\xs(1)$). Then for each valid move $m: b\inv(\ds)$ let $b': B_m$ be the updated board state. If $b'$ contains a winner, then send $m$ to $\yon$ and send the single direction of $\yon$ to the winner. Otherwise, send $m$ to $b': r_m(1)$ and let the map on directions be the identity.
\end{itemize}
Using these maps, we can inductively define maps $r_m \to \free_p$ for odd $m$ as follows. As a base case we have the composite $r_1 \to \yon \to \free_p$. Given a map $r_m \to \free_p$, we have the composite
\begin{align*}
    r_{m+2} \to  p_\xs \tri (\yon + p_\os \tri(\yon + r_m)) \to p_\xs \tri (\yon + p_\os \tri(\yon + \free_p)) &\to p_\xs \tri (\yon + p_\os \tri \free_p) \\ 
   & \to p_\xs \tri (\yon + p_\os) \tri \free_p = p \tri \free_p \to \free_p.
\end{align*}
The second to last map is induced by the composite
\[
    \yon + p_\os \tri \free_p = \yon \tri \yon + p_\os \tri \free_p \to \yon \tri \free_p + p_\os \tri \free_p \to (\yon + p_\os ) \tri \free_p.
\]
Below is the image of a board state in $r_{3}(1)$ under the map $r_3 \to \free_p$. From left to right the directions are sent to $\os$, $\ds$, $\ds$, $\ds$, and $\xs$, as these are the win/loss/tie results of the games as shown.
\[
\begin{tikzpicture}[trees, 
  level distance=20mm,
  level 1/.style={sibling distance=60mm},
  level 2/.style={sibling distance=30mm},
  level 3/.style={sibling distance=2.5mm}]
	\node (a) {\begin{tabular}{c|c|c}
                      $\xs$ & $\os$ & $\xs$ \\      \hline
                      $\os$ & $\os$ & $\xs$ \\      \hline
                            &       &  
                    \end{tabular}
                  }
    	child {node {\begin{tabular}{c|c|c}
                          $\xs$ & $\os$ & $\xs$ \\      \hline
                          $\os$ & $\os$ & $\xs$ \\      \hline
                          $\xs$ &       &  
                        \end{tabular}
                    }
                    child {node {\begin{tabular}{c|c|c}
                          $\xs$ & $\os$ & $\xs$ \\      \hline
                          $\os$ & $\os$ & $\xs$ \\      \hline
                          $\xs$ & $\os$ &  
                        \end{tabular}
                    } child[level distance=12mm] } 
                    child {node {\begin{tabular}{c|c|c}
                          $\xs$ & $\os$ & $\xs$ \\      \hline
                          $\os$ & $\os$ & $\xs$ \\      \hline
                          $\xs$ &       & $\os$
                        \end{tabular}
                    } child[level distance=12mm] }} 
    	child {node {\begin{tabular}{c|c|c}
                          $\xs$ & $\os$ & $\xs$ \\      \hline
                          $\os$ & $\os$ & $\xs$ \\      \hline
                                & $\xs$ &  
                        \end{tabular}
                    }
                    child {node {\begin{tabular}{c|c|c}
                          $\xs$ & $\os$ & $\xs$ \\      \hline
                          $\os$ & $\os$ & $\xs$ \\      \hline
                          $\os$ & $\xs$ &  
                        \end{tabular}
                    } child[level distance=12mm] } 
                    child {node {\begin{tabular}{c|c|c}
                          $\xs$ & $\os$ & $\xs$ \\      \hline
                          $\os$ & $\os$ & $\xs$ \\      \hline
                                & $\xs$ & $\os$
                        \end{tabular}
                    } child[level distance=12mm] } }
            child {node {\begin{tabular}{c|c|c}
                          $\xs$ & $\os$ & $\xs$ \\      \hline
                          $\os$ & $\os$ & $\xs$ \\      \hline
                                &       & $\xs$
                        \end{tabular}
                    }
                    child[level distance=12mm]} 
		;
\end{tikzpicture}
\]
Define $\ttt$ to be the map $r_9 = \yon ^{\{\xs, \os, \ds\}} \to \free_p$.

\newcommand{\lott}{\mathtt{lott}}

Let $(t, \eta, \mu)$ be a $\tri$-monad. A polynomial map $\varphi_\xs\colon\yon \to [p_\xs, t]$ represents an $\xs$ player and a polynomial map $\varphi_\os\colon\yon \to [p_\os, t]$ represents an $\os$ player. If $t$ is the trivial monad $\yon$, then for each board state, the player's next move is deterministic. If $t$ is the lottery monad $\lott = \sum_{M: \nn} \sum_{P: \Delta M} \yon ^M$, then the players' next moves are stochastic. 

By duoidality we have 
\[
    p \otimes [p_\xs, t] \otimes [p_\os, t] \to p \otimes [p_\xs, t] \otimes [\yon + p_\os, t] \to (p_x \otimes [p_x , t]) \tri ((\yon + p_\os) \otimes [\yon + p_\os, t]) \to t \tri t \to t
\] where the first map is induced by the monoidal unit of $t$. Given an $\xs$ player and an $\os$ player, we have a map $\yon^{\{\xs, \os, \ds\}}\cong 
    \yon^{\{\xs, \os, \ds\}}\otimes\yon\otimes\yon \To{T\otimes!\otimes!} 
    \free_{p} \otimes \cofree_\yon\otimes\cofree_\yon$.
Then, the game play is represented by the composite
\begin{equation}\label{eqn.game_play}
    \yon^{\{\xs, \os, \ds\}}\to 
    \free_{p} \otimes \cofree_\yon\otimes\cofree_\yon \To{\free_p\otimes\cofree_{\varphi_\xs}\otimes\cofree_{\varphi_\os}} 
    \free_{p} \otimes \cofree_{[p_\xs, t]} \otimes \cofree_{[p_\os, t]} \to 
    \free_{p \otimes [p_\xs, t] \otimes [p_\os, t]} \to 
    \free_t \to t
\end{equation}
which maps directions in $t$ to winners of completed games.

We can promote this setup to players which \emph{learn}, in other words  players whose strategy dynamically changes after each completed game. A dynamic $\xs$ player consists of a set of states $S_\xs$ and a polynomial map $S_\xs \yon^{S_\xs} \to \yon ^{\{\xs, \os, \ds\}} \otimes \cofree_{[p_\xs, t]} $. Likewise for a dynamic $\os$ player. Such a player consists of the following:
\begin{itemize}
    \item For each state, a behavior tree describing the player's strategy.
    \item For a winner or tied game and each finite path of the behavior tree, a new state.
\end{itemize}
Creating maps like $\varphi$ is the subject of reinforcement learning \cite{sutton2018reinforcement}. As a typical such algorithm, take $S_\xs$ to be the set of functions that assign a score in $\nn$ to each move (direction in $p_\xs$). Let $S_\xs\yon^{S_\xs} \to \yon ^{\{\xs, \os, \ds\}} \otimes \cofree_{[p_\xs, \lott]}$ be defined as follows:
\begin{itemize}
    \item On positions, it takes a score for each move and assigns a (stochastic) strategy which selects moves based on their relative scores.
    \item On directions, note that a direction of $\cofree_{[p_\xs, \lott]}$ contains a finite number of moves played by $\xs$ in the game. If the winner is $\xs$, then we add $1$ to the score for each played move. Otherwise, we keep the original scores.
\end{itemize}

Using the copy-on-directions map $\yon^{\{\xs, \os, \ds\}} \otimes \yon^{\{\xs, \os, \ds\}} \to \yon^{\{\xs, \os, \ds\}}$, we have the composite
\begin{multline*}
    S_\xs \yon^{S_\xs} \otimes S_\os \yon^{S_\os} \to  \yon ^{\{\xs, \os, \ds\}}\otimes \yon ^{\{\xs, \os, \ds\}}  \otimes \cofree_{[p_\xs, t]} \otimes\cofree_{[p_\os, t]} 
    \\
    \to \yon ^{\{\xs, \os, \ds\}}  \otimes \cofree_{[p_\xs, t]} \otimes\cofree_{[p_\os, t]} 
    \To{T} \free_p \otimes \cofree_{[p_\xs, t]} \otimes\cofree_{[p_\os, t]} 
    \to t.
\end{multline*}
This map takes a state for each player, runs the game using each player's strategy, and returns an updated state for each player based on the winner and the player's strategy.

\section{Conclusion}

In this paper, we constructed the free monad $\free_p$ and cofree comonad $\cofree_q$ on arbitrary polynomial functors $p,q:\poly$, and defined a module structure $\free_p\otimes\cofree_q\to\free_{p\otimes q}$. We also gave a series of examples to explain how this models the intuition ``pattern runs on matter.''

From here, it is not hard to show that $\free$ and $\cofree$ respectively extend to a monad and a comonad on $\oorg$, the double category which serves as the base of enrichment for dynamic categorical structures for deep learning, prediction markets, etc., as defined in \cite{shapiro2022dynamic}. We (or others) may show in future work that for any polynomial monad $t$, there is a functor $[-,t]\colon\oorg_\free\op\to\oorg^\cofree$ from opposite of the $\free$-Kleisli category to the $\cofree$-coKleisli category. The latter offers the ability for different machines to operate at different rates in wiring diagrams, and the former offers the ability to call multiple subprocesses before returning, though we have not found compelling examples of either; this again is future work.

The constructions in this paper should generalize straightforwardly to free monads and cofree comonads for familial functors between copresheaf categories, as in \cite{lynch2023concepts, weber2007familial}. One should check that there is again a module structure of the same form in that setting.

\bibliographystyle{eptcs}
\bibliography{references}

\begin{thebibliography}{10}
\providecommand{\bibitemdeclare}[2]{}
\providecommand{\surnamestart}{}
\providecommand{\surnameend}{}
\providecommand{\urlprefix}{Available at }
\providecommand{\url}[1]{\texttt{#1}}
\providecommand{\href}[2]{\texttt{#2}}
\providecommand{\urlalt}[2]{\href{#1}{#2}}
\providecommand{\doi}[1]{doi:\urlalt{https://doi.org/#1}{#1}}
\providecommand{\eprint}[1]{arXiv:\urlalt{https://arxiv.org/abs/#1}{#1}}
\providecommand{\bibinfo}[2]{#2}

\bibitemdeclare{book}{abramsky2001handbook}
\bibitem{abramsky2001handbook}
\bibinfo{author}{Samson \surnamestart Abramsky\surnameend}, \bibinfo{author}{Dov~M \surnamestart Gabbay\surnameend} \& \bibinfo{author}{Thomas~SE \surnamestart Maibaum\surnameend} (\bibinfo{year}{2001}): \emph{\bibinfo{title}{Handbook of Logic in Computer Science: Volume 5. Algebraic and Logical Structures}}.
\newblock \bibinfo{publisher}{OUP Oxford}, \doi{10.1093/oso/9780198537816.001.0001}.

\bibitemdeclare{book}{Adamek.Rosicky:1994a}
\bibitem{Adamek.Rosicky:1994a}
\bibinfo{author}{Jiri \surnamestart Adamek\surnameend} \& \bibinfo{author}{Jiri \surnamestart Rosicky\surnameend}: \emph{\bibinfo{title}{Locally presentable and accessible categories}}.
\newblock {\slshape \bibinfo{series}{London Mathematical Society Lecture Note Series}} \bibinfo{volume}{189}, \bibinfo{publisher}{Cambridge University Press}, \doi{10.1017/CBO9780511600579}.

\bibitemdeclare{article}{kock2012polynomial}
\bibitem{kock2012polynomial}
\bibinfo{author}{Nicola \surnamestart Gambino\surnameend} \& \bibinfo{author}{Joachim \surnamestart Kock\surnameend} (\bibinfo{year}{2012}): \emph{\bibinfo{title}{Polynomial functors and polynomial monads}}.
\newblock {\slshape \bibinfo{journal}{Mathematical Proceedings of the Cambridge Philosophical Society}} \bibinfo{volume}{154}(\bibinfo{number}{1}), p. \bibinfo{pages}{153–192}, \doi{10.1017/S0305004112000394}.

\bibitemdeclare{inproceedings}{Katsumata2020interaction}
\bibitem{Katsumata2020interaction}
\bibinfo{author}{Shin-ya \surnamestart Katsumata\surnameend}, \bibinfo{author}{Exequiel \surnamestart Rivas\surnameend} \& \bibinfo{author}{Tarmo \surnamestart Uustalu\surnameend} (\bibinfo{year}{2020}): \emph{\bibinfo{title}{Interaction Laws of Monads and Comonads}}.
\newblock \doi{10.1145/3373718.3394808}.

\bibitemdeclare{article}{kelly1980unified}
\bibitem{kelly1980unified}
\bibinfo{author}{G~Max \surnamestart Kelly\surnameend} (\bibinfo{year}{1980}): \emph{\bibinfo{title}{A unified treatment of transfinite constructions for free algebras, free monoids, colimits, associated sheaves, and so on}}.
\newblock {\slshape \bibinfo{journal}{Bulletin of the Australian Mathematical Society}} \bibinfo{volume}{22}(\bibinfo{number}{1}), pp. \bibinfo{pages}{1--83}, \doi{10.1017/S0004972700006353}.

\bibitemdeclare{misc}{lynch2023concepts}
\bibitem{lynch2023concepts}
\bibinfo{author}{Owen \surnamestart Lynch\surnameend}, \bibinfo{author}{Brandon~T. \surnamestart Shapiro\surnameend} \& \bibinfo{author}{David~I. \surnamestart Spivak\surnameend} (\bibinfo{year}{2023}): \emph{\bibinfo{title}{All Concepts are $\mathbb{C}\mathbf{at}^\#$}}, \doi{10.48550/arXiv.2305.02571}.

\bibitemdeclare{book}{niu2022poly}
\bibitem{niu2022poly}
\bibinfo{author}{Nelson \surnamestart Niu\surnameend} \& \bibinfo{author}{David~I.\ \surnamestart Spivak\surnameend} (\bibinfo{year}{2022}): \emph{\bibinfo{title}{Polynomial functors: A mathematical theory of interaction}}.
\newblock \doi{10.48550/arXiv.2312.00990}.
\newblock \bibinfo{note}{\emph{In preparation}}.

\bibitemdeclare{misc}{nlab:exact_square}
\bibitem{nlab:exact_square}
\bibinfo{author}{\surnamestart {nLab authors}\surnameend} (\bibinfo{year}{2024}): \emph{\bibinfo{title}{Exact square}}.
\newblock \bibinfo{howpublished}{\url{https://ncatlab.org/nlab/show/exact+square}}.
\newblock \bibinfo{note}{\href{https://ncatlab.org/nlab/revision/exact+square/17}{Revision 17}}.

\bibitemdeclare{misc}{nlab:module_over_a_monoidal_functor}
\bibitem{nlab:module_over_a_monoidal_functor}
\bibinfo{author}{\surnamestart {nLab authors}\surnameend} (\bibinfo{year}{2024}): \emph{\bibinfo{title}{Module over a monoidal functor}}.
\newblock \bibinfo{howpublished}{\url{https://ncatlab.org/nlab/show/module+over+a+monoidal+functor}}.
\newblock \bibinfo{note}{\href{https://ncatlab.org/nlab/revision/module+over+a+monoidal+functor/4}{Revision 4}}.

\bibitemdeclare{misc}{nlab:transfinite_construction_of_free_algebras}
\bibitem{nlab:transfinite_construction_of_free_algebras}
\bibinfo{author}{\surnamestart {nLab authors}\surnameend} (\bibinfo{year}{2024}): \emph{\bibinfo{title}{Transfinite construction of free algebras}}.
\newblock \bibinfo{howpublished}{\url{https://ncatlab.org/nlab/show/transfinite+construction+of+free+algebras}}.
\newblock \bibinfo{note}{\href{https://ncatlab.org/nlab/revision/transfinite+construction+of+free+algebras/13}{Revision 13}}.

\bibitemdeclare{article}{shapiro2022dynamic}
\bibitem{shapiro2022dynamic}
\bibinfo{author}{Brandon~T. \surnamestart Shapiro\surnameend} \& \bibinfo{author}{David~I. \surnamestart Spivak\surnameend} (\bibinfo{year}{2022}): \emph{\bibinfo{title}{Dynamic categories, dynamic operads: From deep learning to prediction markets}}.
\newblock {\slshape \bibinfo{journal}{Electronic Proceedings in Theoretical Computer Science}}, \doi{10.4204/EPTCS.380.11}.

\bibitemdeclare{book}{sutton2018reinforcement}
\bibitem{sutton2018reinforcement}
\bibinfo{author}{Richard~S \surnamestart Sutton\surnameend} \& \bibinfo{author}{Andrew~G \surnamestart Barto\surnameend} (\bibinfo{year}{2018}): \emph{\bibinfo{title}{Reinforcement learning: An introduction}}.
\newblock \bibinfo{publisher}{MIT press}.

\bibitemdeclare{article}{weber2007familial}
\bibitem{weber2007familial}
\bibinfo{author}{Mark \surnamestart Weber\surnameend}: \emph{\bibinfo{title}{Familial 2-Functors and Parametric Right Adjoints}}.
\newblock {\slshape \bibinfo{journal}{Theory and Applications of Categories}} \bibinfo{volume}{18}(\bibinfo{number}{22}), pp. \bibinfo{pages}{Paper No. 22, 665--732}.

\end{thebibliography}

\appendix

\section{Distributing Colimits over $\tri$}

In general, colimits do not distribute over the subsitution product $\tri$. However, in this section we give hypotheses under which the natural maps
\[
    \colim_{j \in \ja} (p \tri q_j) \to p \tri (\colim_{j \in \ja} q_j)
    \qqand
    \colim_{j \in \ja} (p_j \tri q) \to (\colim_{j \in \ja} p_j) \tri q
\] are in fact isomorphisms.

\subsection{Right Distribution}
We begin with hypotheses under which colimits right distribute over $\tri$.

\begin{proposition}\label{prop:colim-right-distribute}
    If a polynomial $p$ is $\kappa$-small, then for all $\kappa$-filtered categories $\ja$ and diagrams $q\colon \ja \to \poly$, the natural map 
    \[
        \colim_{j \colon \ja} \left(p \tri q_j\right) \to p \tri \left( \colim_{j \colon \ja} q_j \right)
    \]
    is an isomorphism.
\end{proposition}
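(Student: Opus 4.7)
The plan is to verify the isomorphism by directly comparing positions and directions of both sides, using an explicit description of filtered colimits in $\poly$. Specifically, for a $\kappa$-filtered diagram $r\colon \ja \to \poly$, the colimit $R = \colim_j r_j$ has positions $R(1) = \colim_j r_j(1)$ (the filtered colimit of sets) and, for a position $c \in R(1)$ represented by some $R_j \in r_j(1)$, directions given by the cofiltered limit $R[c] = \lim_{j' \geq j} r_{j'}[R_{j'}]$ along the backwards direction-maps induced by morphisms in $\ja$. I would first establish this description from the universal property of the colimit in $\poly$.

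Granting it, the positions of the left-hand side simplify as
\[
    \bigl(\colim_j (p \tri q_j)\bigr)(1) = \colim_j \sum_P q_j(1)^{p[P]} = \sum_P \bigl(\colim_j q_j(1)\bigr)^{p[P]},
\]
using that coproducts commute with colimits together with $|p[P]| < \kappa$ (since $p$ is $\kappa$-small) and the standard fact that $\kappa$-filtered colimits in $\smset$ commute with $<\kappa$-indexed products. This matches the positions $\sum_P C(1)^{p[P]}$ of the right-hand side $p \tri \colim_j q_j$, where $C = \colim_j q_j$. For directions at a position $(P, \phi\colon p[P] \to C(1))$, $\kappa$-smallness lets us lift $\phi$ to some $\phi_{j_0}\colon p[P] \to q_{j_0}(1)$, inducing compatible lifts $\phi_j$ for $j \geq j_0$. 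The directions on the left- and right-hand sides become
\[
    \lim_j \sum_{d : p[P]} q_j[\phi_j(d)] \qqand \sum_{d : p[P]} \lim_j q_j[\phi_j(d)],
\]
respectively, and these agree because any compatible family in the cofiltered limit must be supported in a single summand: each morphism of $\ja$ preserves the summand index $d$ naturally, and $\ja$ is connected (being filtered).

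The main obstacle is handling the fact that filtered colimits in $\poly$ are \emph{not} computed pointwise in $[\smset, \smset]$, so one really does need the explicit positions/directions description above. A quick way to see non-pointwiseness is that $\colim_n \yon^{A_n}$ in $\poly$ for a cofiltered system of direction-sets computes to $\yon^{\lim_n A_n}$, which on evaluation at $X$ gives $X^{\lim A_n}$, not $\colim_n X^{A_n}$. Once the positions/directions description is in place, together with the standard commutation of $<\kappa$-sized products with $\kappa$-filtered colimits in $\smset$ and the connectedness argument for the coproduct-limit exchange, the proposition follows by routine unwinding.
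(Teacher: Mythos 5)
Your proposal is correct and follows essentially the same route as the paper's (very terse) proof: positions are handled by $\kappa$-smallness applied to the diagram of position-sets, and directions by the explicit colimit formula in $\poly$ (positions as colimits, directions as connected limits) together with the fact that connected limits commute with coproducts, which is exactly your ``supported in a single summand'' argument. The only cosmetic difference is that you route the position calculation through the cardinality characterization of $\kappa$-smallness and the commutation of $\kappa$-filtered colimits with $<\kappa$-indexed products, where the paper just invokes the definition of $\kappa$-small directly on the set-valued diagram $j\mapsto q_j(1)$.
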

\begin{proof}
    That this map is an isomorphism on positions follows directly from the definition of $\kappa$-small. That this map is an isomorphism on directions follows from the definition of colimits in $\poly$ and the fact that connected limits preserve coproducts.
\end{proof}

\subsection{Left Distribution} \label{app:left-dist}

Next we consider hypotheses under which colimits left distribute over $\tri$.
Let $\polycart$ be subcategory of $\poly$ consisting of cartesian maps. If $\phi$ is cartesian, then so is $p\tri\phi$ and $p+\phi$ for any $p:\poly$.

First we give a series of Lemmas which allow us to prove Proposition~\ref{prop:colim-left-distribute}.

\begin{lemma}\label{lemma.ext_ff}
A polynomial map $p\to q$ is an isomorphism if and only if for all sets $X$, the induced function $p\tri X\to q\tri X$ is a bijection.
\end{lemma}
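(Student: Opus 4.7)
The forward direction is immediate: $-\tri X$ is a functor, so any isomorphism $p\to q$ in $\poly$ yields a polynomial isomorphism $p\tri X\to q\tri X$, and the underlying function is a bijection.

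For the reverse direction, my plan is to recover the two pieces of a polynomial map from the functions $p\tri X\to q\tri X$ by probing with carefully chosen test sets. Viewing a set $X$ as the constant polynomial $X$, we have $p\tri X\iso\sum_{P:p(1)}X^{p[P]}$, and a map $\phi\colon p\to q$ induces on $(P,f)\in X^{p[P]}$ the element $(\phi(1)(P),\,f\circ\phi^\#_P)\in q\tri X$. Taking $X=1$ collapses each $X^{p[P]}$ to a point, so $p\tri 1\iso p(1)$ and the induced bijection is precisely $\phi(1)\colon p(1)\to q(1)$. Hence the position-map of $\phi$ is a bijection.

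Next I fix an arbitrary position $P:p(1)$. Because $\phi(1)$ is a bijection, for any set $X$ the bijection $p\tri X\to q\tri X$ is a disjoint union of bijections on summands, giving in particular a bijection $X^{p[P]}\to X^{q[\phi(1)(P)]}$ that is precomposition with $\phi^\#_P\colon q[\phi(1)(P)]\to p[P]$. The key step is then to argue that precomposition by $\phi^\#_P$ being bijective \emph{for all $X$} forces $\phi^\#_P$ itself to be bijective. I will do this directly with $X=\{0,1\}$: if $\phi^\#_P$ misses a point $b\in p[P]$, then two functions $p[P]\to 2$ that agree everywhere except at $b$ precompose to the same element, violating injectivity of the restriction; and if $\phi^\#_P(a)=\phi^\#_P(b)$ for distinct $a,b\in q[\phi(1)(P)]$, then the indicator of $\{a\}$ in $2^{q[\phi(1)(P)]}$ is not of the form $f\circ\phi^\#_P$, violating surjectivity.

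Combining these observations, $\phi(1)$ and every $\phi^\#_P$ are bijections, so an inverse in $\poly$ can be built and $\phi$ is an isomorphism. The only mildly delicate point is the reduction to individual summands, for which we need to have established bijectivity on positions \emph{first}; after that, the rest is a routine two-element test.
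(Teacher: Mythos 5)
Your proof is correct, but it takes a genuinely different route from the paper's. The paper disposes of the lemma in one line by citing the fact that the extension functor $\ext\colon\poly\to\smset^\smset$, with $\ext(p)(X)\cong p\tri X$, is fully faithful: a fully faithful functor reflects isomorphisms, and an isomorphism in $\smset^\smset$ is precisely a natural transformation all of whose components are bijections. You instead unwind this into a self-contained elementary verification: probing with $X=1$ collapses each summand $X^{p[P]}$ to a point and recovers the position map $\phi(1)$; once that is known to be a bijection, the given bijection on each $X$ splits into summand-wise bijections $X^{p[P]}\to X^{q[\phi(1)(P)]}$ given by precomposition with $\phi^\#_P$, and your two-element tests correctly show that bijectivity of precomposition at $X=2$ forces $\phi^\#_P$ to be a bijection (non-surjectivity of $\phi^\#_P$ kills injectivity of precomposition, non-injectivity kills surjectivity, and the degenerate case of empty direction-sets is handled vacuously). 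What your approach buys is self-containedness and the sharper observation that the two test sets $1$ and $2$ already suffice rather than the full quantification over all $X$; what the paper's buys is brevity, at the cost of invoking full faithfulness of $\ext$ as a known structural fact. Your closing remark is also the right one: the reduction to individual summands is only legitimate after $\phi(1)$ is known to be injective, so positions must indeed be handled before directions.
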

\begin{proof}
This is just a restatement of the fact that $\ext\colon\poly\to\smset^\smset$ is fully faithful; indeed, $\ext(p)(X)\cong p\tri X$.
\end{proof}

The coproduct completion $\Sigma\cat{C}$ of a category $\cat{C}$ has as objects pairs $(S,C)$ where $S:\smset$ and $C\colon S\to\cat{C}$ is a discrete diagram, and it has as morphisms lax triangles
\[
\begin{tikzcd}[column sep=10pt]
	S_1\ar[rr, "f"]\ar[dr, bend right=15pt, "C_1"', "" name=C1]&&
	S_2\ar[dl, bend left=15pt, "C_2", ""' name=C2]\\&
	\cat{C}
	\ar[from=C1,to=C2|-C1, shorten=5pt, Rightarrow, "f^\flat"]
\end{tikzcd}
\]
A diagram $\cat{I}\to\Sigma\cat{C}$ consists of a pair $(S,C)$, where $S\colon\cat{I}\to\smset$ is a diagram and $C\colon\el S\to\cat{C}$ is a functor  from the category of elements of $S$ to $\cat{C}$. 

\begin{lemma}\label{lemma.coprod_comp_colimits}
Let $\cat{C}$ be a category and let $\Sigma\cat{C}$ be its coproduct completion. If $\cat{C}$ has $\cat{I}$-shaped colimits, then so does $\Sigma\cat{C}$, and they are computed as follows.

Given a diagram $(S,C)\colon\cat{I}\to\Sigma\cat{C}$, consider the Kan extension
\[
\begin{tikzcd}
	\el S\ar[r, "C", ""' name=C]\ar[d]&
	\cat{C}\\
	\pi_0(\el S)\ar[ur, bend right=20pt, "\fun{Lan}"', "" name=colim]
	\ar[from=C, to=C|-colim, Rightarrow]
\end{tikzcd}
\]
where $\pi_0\colon\smcat\to\smset$ is the connected components reflection. Then the colimit is given by
\begin{equation}\label{eqn.colim_pi0_lan}
\colim(S,C)\cong(\pi_0(\el S),\fun{Lan}).
\end{equation}
For each element $s_0:\pi_0(\el S)$, its image object $\fun{Lan}(s_0):\ob\cat{C}$ is given by the colimit over all $(i,s_i)\mapsto s$ of $C(s_i)$.
\end{lemma}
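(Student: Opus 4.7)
The plan is to unpack what a cocone on $(S,C)\colon\cat{I}\to\Sigma\cat{C}$ looks like and exhibit a natural bijection with morphisms out of the proposed object $(\pi_0(\el S),\fun{Lan})$, using the universal property of the left Kan extension.

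First, I would describe the data of a morphism $(S',C')\to(T,D)$ in $\Sigma\cat{C}$: a function $f\colon S'\to T$ together with, for each $s\in S'$, a $\cat{C}$-morphism $C'(s)\to D(f(s))$. Applied to a cocone from $(S,C)$ with apex $(T,D)$, the first-coordinate data assembles into a cocone on $S\colon\cat{I}\to\smset$, equivalently a single function $\bar f\colon\pi_0(\el S)\to T$, since $\pi_0(\el S)\cong\colim_{\cat{I}}S$. The second-coordinate data, once compatibility under the arrows of $\cat{I}$ is imposed, is precisely a natural transformation $\alpha\colon C\Rightarrow D\circ\bar f\circ\pi$ of functors $\el S\to\cat{C}$; the two compatibilities are just the two components of the morphism condition in $\Sigma\cat{C}$ spelled out levelwise in $\el S$.

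Next I would invoke the defining universal property of $\fun{Lan}\coloneqq\fun{Lan}_\pi C$: natural transformations $\alpha\colon C\Rightarrow D\bar f\pi$ biject naturally with natural transformations $\tilde\alpha\colon\fun{Lan}\Rightarrow D\bar f$ of functors $\pi_0(\el S)\to\cat{C}$. Since a morphism $(\pi_0(\el S),\fun{Lan})\to(T,D)$ in $\Sigma\cat{C}$ is exactly a pair $(\bar f,\tilde\alpha)$, this yields a natural bijection between cocones on $(S,C)$ and morphisms out of $(\pi_0(\el S),\fun{Lan})$, so by Yoneda the latter is the colimit. The formula for $\fun{Lan}(s_0)$ is then the pointwise Kan extension formula applied to the \emph{discrete} category $\pi_0(\el S)$: the comma category $\pi/s_0$ collapses to the full fiber $\pi^{-1}(s_0)$, giving $\fun{Lan}(s_0)\cong\colim_{(i,s_i)\mapsto s_0}C(s_i)$ as stated.

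The main obstacle is existence of the Kan extension, i.e., showing that each fiber colimit lives in $\cat{C}$ under only the hypothesis of $\cat{I}$-shaped colimits. Each fiber $\pi^{-1}(s_0)$ is a subcategory of $\el S$ whose shape is controlled by $\cat{I}$ and $S$; in the filtered regime relevant to the subsequent proof of left distribution, $\el S$ and hence each of its connected components is filtered, so the $\cat{I}$-filtered colimit hypothesis on $\cat{C}$ is enough. Outside the filtered case one either strengthens the hypothesis or observes that the relevant colimits can be constructed from coproducts and coequalizers of $\cat{I}$-shaped pieces. The remainder of the argument is pure bookkeeping with Yoneda and the Kan extension universal property.
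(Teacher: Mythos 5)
Your proof is correct and follows essentially the same route as the paper's: identify $\colim S$ with $\pi_0(\el S)$, obtain the colimit cone from the universal property of the left Kan extension along $\el S\to\pi_0(\el S)$, and read off the pointwise formula from the fact that the comma category over a discrete base is just the fiber (the paper phrases this last step via exactness of the pullback/comma square). Your closing remark on existence of the fiberwise colimits addresses a point the paper glosses over, and your observation that each connected component of $\el S$ is filtered whenever $\cat{I}$ is covers every use the paper actually makes of this lemma.
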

\begin{proof}
First consider the case $\cat{C}=1$, so that $\Sigma\cat{C}\cong\smset$. We need to check that for any $S\colon\cat{I}\to\smset$, there is a bijection
\begin{equation}\label{eqn.colim_pi0}
\colim S\cong\pi_0(\el S).
\end{equation}
This follows from the fact that the inclusion of diagrams of sets into diagrams of categories is a left adjoint, the Grothendieck construction taking diagrams of categories to categories is a left adjoint, and $\pi_0$ taking categories to sets is a left adjoint.

Given $T:\smset$, a discrete diagram $D\colon T\to\cat{C}$, and diagrams
$(S_i,C_i)\to(T,D)$, coherently over $i:\cat{I}$, one obtains a diagram as left, which factors uniquely as right:
\[
\begin{tikzcd}[column sep=10pt]
	\el S\ar[rr]\ar[dr, bend right=15pt, "C"', "" name=C]&&
	T\ar[dl, bend left=15pt, "D", ""' name=D]\\&
	\cat{C}
	\ar[from=C, to=C-|D, shorten=5pt, Rightarrow]
\end{tikzcd}
\hspace{.6in}
\begin{tikzcd}
	\el S\ar[r]\ar[dr, bend right=15pt, "C"', "" name=C]&
	\pi_0(\el S)\ar[d, "\fun{Lan}" description, "" name=L, ""' name=L']\ar[r]&
	T\ar[dl, bend left=15pt, "D", ""' name=D]\\&
	\cat{C}
	\ar[from=C|-L', to=L', shorten=5pt, Rightarrow]
	\ar[from=L, to=L-|D, shorten=5pt, Rightarrow]
\end{tikzcd}
\]
This gives \eqref{eqn.colim_pi0_lan}. For the last statement, note that since $1\to\pi_0(\el S)$ is a map of sets, the pullback of categories
\[
\begin{tikzcd}
  \bullet\ar[r]\ar[d]\ar[dr, phantom, very near start, "\lrcorner"]&
  \el S\ar[d]\\
  1\ar[r, "s_0"']&
  \pi_0(\el S)
\end{tikzcd}
\]
is also a comma square. Hence it is exact in the sense of \cite{nlab:exact_square}, completing the proof.
\end{proof}

Lemma~\ref{lemma.coprod_comp_colimits} provides a formula for computing colimits in $\poly$, since $\poly\cong\Sigma\smset\op$ is the coproduct completion of $\smset\op$. Namely, the position-set of the colimit of $p\colon\cat{I}\to\poly$ is given by the colimit $\colim_ip_i(1)$ of the position-sets, and the directions at a position $P:\colim_ip_i(1)$ are given by the limit (taken in $\smset$, i.e.\ the colimit taken in $\smset\op$) of the associated connected diagram $\cat{E}_P$ of direction sets.
\[
\begin{tikzcd}
	\cat{E}_P\ar[r]\ar[d]&
	\el p(1)\ar[r, "{p[-]}", ""' name=C]\ar[d]&
	\smset\op\\
	1\ar[r, "P"']&
	\colim_ip_i(1)\ar[ur, bend right=20pt, "\fun{Lan}"', "" name=colim]
	\ar[from=C, to=C|-colim, Rightarrow]\ar[ul, phantom, very near end, "\lrcorner"]
\end{tikzcd}
\]

\begin{proposition}\label{prop.polycart_reflect}
The inclusion $\polycart\to\poly$ reflects all colimits.
\end{proposition}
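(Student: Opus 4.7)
The plan is to verify the reflection property directly from the definition. Let $p\colon\cat{I}\to\polycart$ be a diagram and let $(c,\{\phi_i\colon p_i\to c\})$ be a cocone in $\polycart$ whose image in $\poly$ is a colimit cocone; I need to show that it is already a colimit cocone in $\polycart$. So take any competing cocone $(c',\{\phi'_i\})$ in $\polycart$. Regarded as a cocone in $\poly$, it factors uniquely through $c$ by some map $u\colon c\to c'$ in $\poly$. The entire content of the proof is to check that this $u$ is cartesian, for then $u$ witnesses the required universal property in $\polycart$ (uniqueness being inherited from uniqueness in $\poly$).

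The first step is to extract joint surjectivity of the cocone on positions. By the discussion following \cref{lemma.coprod_comp_colimits}, the position-set of the $\poly$-colimit is $c(1)\cong\pi_0(\el p(1))$, and every element of $\pi_0(\el p(1))$ is represented by some pair $(i,P_i)$ with $P_i\colon p_i(1)$. Thus each $P\colon c(1)$ can be written as $\phi_{i,1}(P_i)$ for some $i$ and $P_i$.

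Next I exploit this to prove cartesianness of $u$. Fix $P\colon c(1)$ and a representative $(i,P_i)$ as above. Applying the composition rule for the direction maps to the equation $u\circ\phi_i=\phi'_i$ yields
\[
    (\phi'_i)^\#_{P_i} \;=\; (\phi_i)^\#_{P_i}\circ u^\#_P.
\]
By hypothesis, both $\phi_i$ and $\phi'_i$ are cartesian, so the left-hand side and the first factor of the right-hand side are isomorphisms of direction sets. Hence $u^\#_P$ is an isomorphism. Since $P$ was arbitrary, $u$ is cartesian, so it is a morphism of $\polycart$, and the cocone $(c,\{\phi_i\})$ has the required universal property in $\polycart$.

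The only potential obstacle is the joint surjectivity step, but the coproduct-completion description of $\poly$ makes it essentially immediate. Everything else is a one-line diagram chase on direction sets. No additional machinery beyond \cref{lemma.coprod_comp_colimits} is required.
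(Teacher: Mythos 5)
Your proof is correct and follows essentially the same route as the paper's: reduce to showing the induced comparison map is cartesian, use the coproduct-completion description of colimits (\cref{lemma.coprod_comp_colimits}) to represent each position of the colimit by a position of some $p_i$, and conclude by a two-out-of-three argument on direction maps. No gaps.
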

\begin{proof}
Suppose given a category $I$, a functor $p\colon I\to\polycart$, a polynomial $\ol{p}:\poly$, and cartesian maps $\varphi_i\colon p_i\to \ol{p}$ forming a cone. Suppose further that in $\poly$ it is a colimit cone. To see that it is a colimit cone in $\polycart$ we need only check that for any $q$ and cone $\psi_i\colon p_i\to q$ in $\polycart$, the induced map $\psi\colon\ol{p}\to q$ in $\poly$ is in fact in $\polycart$, i.e.\ for any position $P:\ol{p}(1)$, the induced function $\psi^\sharp_P\colon q[\varphi(P)]\to\ol{p}[P]$ is bijective. 

By Lemma~\ref{lemma.coprod_comp_colimits}, the position-set of a colimit is the colimit of the position-sets for any diagram of polynomials, meaning there exists some $i:I$ and $P_i:p_i(1)$ representing $P$, i.e.\ with $\varphi_i(P_i)=P$. Since $p_i\to\ol{p}$ is cartesian, we have a commuting triangle of functions $q[\varphi(P)]\to\ol{p}[P]\to p_i[P_i]$ of which two are bijections; it follows that the required one is as well.
\end{proof}

\begin{proposition}\label{prop.polycart_colimits}
The inclusion $\polycart\to\poly$ creates coproducts and filtered colimits. Moreover, the composite
\[
\polycart\to\poly\To{\ext}\smset^\smset
\]
creates coproducts and filtered colimits.
\end{proposition}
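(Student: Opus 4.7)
The plan is to combine three ingredients already in hand: the explicit colimit formula for $\poly\cong\Sigma\smset\op$ provided by \cref{lemma.coprod_comp_colimits}, the reflection result \cref{prop.polycart_reflect}, and the full faithfulness of $\ext$ from \cref{lemma.ext_ff}. Recall that for any diagram $p\colon I\to\poly$, the positions of $\colim_i p_i$ are given by $\colim_i p_i(1)$, and the directions at a position $P$ are computed as the limit in $\smset$ of the connected diagram $\cat{E}_P\to\smset\op$ consisting of the direction sets $p_i[P_i]$ over all representatives $(i,P_i)$ of $P$.

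For coproducts the indexing category is discrete, so each position $P$ of $\coprod_i p_i$ has a unique representative, and the direction set is simply $p_i[P_i]$. Hence each coprojection $p_i\to\coprod_i p_i$ is cartesian. For a filtered diagram with cartesian transition maps, the category $\cat{E}_P$ is itself filtered (hence connected), and cartesianness forces every morphism in $\cat{E}_P$ to map to an isomorphism in $\smset\op$. Therefore its limit in $\smset$ collapses to each $p_i[P_i]$ via a bijection, so each cocone map $p_i\to\ol{p}$ is cartesian. In both cases, \cref{prop.polycart_reflect} then promotes the colimit cone in $\poly$ to a colimit cone in $\polycart$, establishing that $\polycart\to\poly$ creates coproducts and filtered colimits.

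For the composite to $\smset^\smset$, since coproducts and filtered colimits there are computed pointwise and $\ext$ is fully faithful, it suffices to verify that $\ext$ sends the colimits constructed above to pointwise colimits. For coproducts this is the distributivity identity $(\coprod_i p_i)\tri X\cong\coprod_i(p_i\tri X)$. For filtered cartesian diagrams one computes $(\colim_i p_i)\tri X=\sum_{[P]\colon\colim_i p_i(1)}X^{\ol{p}[P]}$; using the identification $\ol{p}[P]\cong p_{i_0}[P_{i_0}]$ above, this matches the pointwise formula for $\colim_i(p_i\tri X)$, because with direction sets constant up to cartesian bijection the filtered colimit reduces to a colimit over positions with fixed exponents. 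Full faithfulness of $\ext$ then supplies the uniqueness of the lift required by the definition of \emph{creates}.

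The main technical hurdle is the filtered case: verifying that $\cat{E}_P$ is connected and consists entirely of isomorphisms, so that its limit in $\smset$ stabilizes to a single direction set. This observation is what makes cartesianness preserve both the cone structure in $\poly$ and the pointwise colimit under $\ext$; once it is in hand, the remaining claims follow routinely from distributivity of $\tri$ over coproducts and from the full faithfulness of $\ext$.
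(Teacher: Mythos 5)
Your proof is correct and follows essentially the same route as the paper's: show the colimit cone in $\poly$ consists of cartesian maps (using the explicit position/direction formula and the fact that each $\cat{E}_P$ is filtered with all maps sent to bijections), invoke \cref{prop.polycart_reflect} to reflect the colimit into $\polycart$, and handle $\ext$ via distributivity over coproducts together with the collapse of the direction-set limit. The only cosmetic difference is that the paper first reduces filtered colimits to directed ones via a citation, whereas you argue directly with filtered diagrams, and you spell out the cartesianness of the structure maps that the paper dismisses as ``easy to check.''
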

\begin{proof}
The functor $\ext\colon\poly\to\smset^\smset$ is fully faithful and hence reflects all colimits; by Proposition~\ref{prop.polycart_reflect}, $\polycart\to\poly$ does too. It suffices to show that $\polycart$ has coproducts and filtered colimits, because and that the two functors $\polycart\to\poly$ and $\polycart\to\smset^\smset$ preserve them.

Coproduct inclusions of polynomials are cartesian, so the coproduct cone exists in $\polycart$, and hence it is a coproduct. Since $\ext$ preserves coproducts, this concludes the case for coproducts.

By \cite{Adamek.Rosicky:1994a}, a category has (resp.\ a functor preserves) all filtered colimits iff it has (resp.\ preserves) all directed colimits, so let $\cat{I}$ be a directed category and $p\colon\cat{I}\to\polycart$ a directed sequence of polynomials and cartesian maps. Then the induced map $\cat{I}\to\poly$ has a colimit, say $\ol{p}$, and it is easy to check that all the structure maps $p_i\to\ol{p}$ are cartesian. Hence $\ol{p}\cong\colim_ip_i$ is a colimit in $\polycart$, and it is preserved by the inclusion.

It remains to check that $\ol{p}$ is a colimit in $\smset^\smset$. That is, we need to check that for any $X:\smset$, the function
\[
\sum_{P:\pi_0(\el p)}\colim_{e:\cat{E}_P}X^{p[e]}\too
\sum_{P:\pi_0(\el p)}X^{\lim_{e:\cat{E}_P}p[e]}
\]
is a bijection. Choose any $P$; it suffices to show that $\colim_{e:\cat{E}_P} X^{p[e]}\to X^{\lim_{e:\cat{E}_P}p[e]}$ is a bijection. But $\cat{E}_P$ is filtered and for any $e\to e'$ in $\cat{E}_P$, the map $p[e']\to p[e]$ is a bijection. Hence we can pick any object $e':\cat{E}_P$, replace all $p_e$ with $p_{e'}$, and have an isomorphic diagram. Now the colimit and limit are both constant, meaning that both sides of the desired map are isomorphic to $X^{p[e']}$, and hence to each other.
\end{proof}

\begin{restatable}{proposition}{colimleftdistribute}
\label{prop:colim-left-distribute}
For any polynomial $q$, filtered category $\cat{I}$, and diagram $p\colon\cat{I}\to\polycart$, the natural map
\[
\colim_{i:I}(p_i\tri q)\to(\colim_{i:I}p_i)\tri q
\]
is an isomorphism in $\poly$.
\end{restatable}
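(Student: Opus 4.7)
The plan is to reduce the claim to a statement about colimits in $\smset^\smset$ by applying the fully faithful embedding $\ext\colon\poly\to\smset^\smset$. By \cref{lemma.ext_ff}, a polynomial map is an isomorphism if and only if it becomes a bijection after tensoring on the right with every set $X$. So it suffices to check that for each $X:\smset$, the induced function
\[
\colim_{i:I}(p_i\tri q)\tri X \too \bigl((\colim_{i:I}p_i)\tri q\bigr)\tri X
\]
is a bijection. Using associativity of $\tri$, both sides can be rewritten using $q \tri X$ in place of $q$, so the target becomes the comparison
\[
\colim_{i:I}\bigl(p_i\tri(q\tri X)\bigr)\too(\colim_{i:I}p_i)\tri(q\tri X).
\]

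Next I would verify that the diagram $i\mapsto p_i\tri q$ still lives in $\polycart$, i.e.\ that tensoring on the right with any polynomial preserves cartesian maps. This is a direct computation: if $\phi\colon p\to p'$ is cartesian, then the backward-on-directions map $(\phi\tri q)^\sharp$ at a position $(P,f)$ of $p\tri q$ acts as $(d',e)\mapsto(\phi_P^\sharp(d'),e)$, which is a bijection because $\phi_P^\sharp$ is. Hence the transition maps of $i \mapsto p_i \tri q$ are cartesian.

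With that in hand, I would invoke \cref{prop.polycart_colimits}, which states that the composite $\polycart\to\poly\xrightarrow{\ext}\smset^\smset$ creates filtered colimits. Applied to the diagram $i\mapsto p_i$, evaluated at the set $q\tri X$, this gives
\[
\ext(\colim_ip_i)(q\tri X)\iso\colim_i\bigl(p_i\tri(q\tri X)\bigr),
\]
identifying the right-hand side of our target map with the colimit. Applied to $i\mapsto p_i\tri q$ evaluated at $X$ (together with the associativity isomorphism already noted), it identifies the left-hand side with the same colimit. The two isomorphisms are natural and compatible with the canonical comparison, so the comparison is a bijection, and then \cref{lemma.ext_ff} concludes that the original map is an isomorphism in $\poly$.

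The only real work is the cartesianness observation and keeping track of naturality of the comparison map under the chain of isomorphisms; neither is subtle, and the substantive content has already been packaged into \cref{prop.polycart_colimits}. The key point, conceptually, is that once one restricts to cartesian transitions, filtered colimits of polynomials are computed ``pointwise after $\tri X$,'' which is exactly what is needed for left distributivity.
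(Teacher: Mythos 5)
Your proposal is correct and follows essentially the same route as the paper: reduce to bijectivity after $-\tri X$ via \cref{lemma.ext_ff}, then apply \cref{prop.polycart_colimits} twice, once to the diagram $i\mapsto p_i\tri q$ at the set $X$ and once to $i\mapsto p_i$ at the set $q\tri X$. You are in fact slightly more careful than the paper, which leaves implicit the check that right-whiskering $\phi\tri q$ preserves cartesian maps (it only records the left-whiskering case $p\tri\phi$); your explicit verification of this point is a welcome addition.
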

\begin{proof}\label{pf:colim-left-distribute}
By Lemma~\ref{lemma.ext_ff} it suffices to show that the function $\colim_{i:I}(p_i\tri q)\tri X\to(\colim_{i:I}p_i)\tri q\tri X$ is a bijection for any $X:\smset$. This is just two uses of Proposition~\ref{prop.polycart_colimits}:
\[
\colim_{i:I}(p_i\tri q)\tri X\cong\colim_{i:I}(p_i\tri q\tri X)\cong(\colim_{i:I}p_i)\tri q\tri X.
\qedhere
\]
\end{proof}

\section{Proofs for the Free Monad Monad}\label{apx:free-monad}

\associativity*
\begin{proof}
    We prove this lemma by transfinite induction on $\alpha$. First, for $\alpha = 0$ the result is immediate.

    Next, suppose that the result holds for $\alpha$, and we will show that it holds for the successor ordinal $\alpha + 1$. By definition of $p\hoc{\alpha + 1}$, it suffices to show that the outer square in the following diagram commutes. 

    % https://q.uiver.app/#q=WzAsNixbMCwwLCJwXFxob2N7XFxiZXRhfSBcXHRyaSBwXFxob2N7XFxnYW1tYX0gKyBwIFxcdHJpIHAgXFxob2N7XFxhbHBoYX0gXFx0cmkgcCBcXGhvY3tcXGJldGF9IFxcdHJpIHAgXFxob2N7XFxnYW1tYX0iXSxbMSwwLCJwIFxcaG9je1xcYmV0YSArIFxcZ2FtbWF9ICsgcCBcXHRyaSBwIFxcaG9je1xcYWxwaGF9IFxcdHJpIHAgXFxob2N7XFxiZXRhICsgXFxnYW1tYX0iXSxbMCwxLCJwXFxob2N7XFxiZXRhfSBcXHRyaSBwIFxcaG9je1xcZ2FtbWF9ICsgcCBcXHRyaSBwIFxcaG9je1xcYWxwaGEgKyBcXGJldGF9IFxcdHJpIHAgXFxob2N7XFxnYW1tYX0iXSxbMSwxLCJwXFxob2N7XFxiZXRhICsgXFxnYW1tYX0gKyBwIFxcaG9jIHtcXGFscGhhICsgXFxiZXRhICsgXFxnYW1tYX0iXSxbMCwyLCJwXFxob2N7XFxhbHBoYSArIFxcYmV0YSArIDF9IFxcdHJpIHAgXFxob2N7XFxnYW1tYX0iXSxbMSwyLCJwXFxob2N7XFxhbHBoYSArIFxcYmV0YSArIFxcZ2FtbWEgKyAxfSJdLFswLDEsIlxcbXVcXGhvY3tcXGJldGEsIFxcZ2FtbWF9ICsgcCBcXHRyaSBwXFxob2N7XFxhbHBoYX0gXFx0cmkgXFxtdVxcaG9je1xcYmV0YSwgXFxnYW1tYX0iXSxbMCwyLCJwXFxob2N7XFxiZXRhfSBcXHRyaSBwXFxob2N7XFxnYW1tYX0gKyBwIFxcdHJpIFxcbXVcXGhvY3tcXGFscGhhLCBcXGJldGF9XFx0cmkgcCBcXGhvY3tcXGdhbW1hfSIsMl0sWzIsNCwiW1xcaW90YVxcaG9je1xcYmV0YX1cXHRyaSBwXFxob2N7XFxnYW1tYX0sIFxcbGFtYmRhXFxob2N7XFxhbHBoYSArIFxcYmV0YX1cXHRyaSBwXFxob2N7XFxnYW1tYX1dIiwyXSxbMSwzLCJwXFxob2N7XFxiZXRhICsgXFxnYW1tYX0gKyBwIFxcdHJpIFxcbXVcXGhvY3tcXGFscGhhLCBcXGJldGEgKyBcXGdhbW1hfSJdLFszLDUsIltcXGlvdGFcXGhvY3tcXGJldGEgKyBcXGdhbW1hfSwgXFxsYW1iZGFcXGhvY3tcXGFscGhhICsgXFxiZXRhICsgXFxnYW1tYX1dIl0sWzIsMywiXFxtdVxcaG9je1xcYmV0YSwgXFxnYW1tYX0gKyBwIFxcaG9je1xcYWxwaGF9IFxcbXVcXGhvY3tcXGFscGhhICsgXFxiZXRhLCBcXGdhbW1hfSJdLFs0LDUsIlxcbXVcXGhvY3tcXGFscGhhICsgXFxiZXRhICsgMSwgXFxnYW1tYX0iLDJdXQ==
\[\begin{tikzcd}[column sep=75pt, row sep=large]
	{p\hoc{\beta} \tri p\hoc{\gamma} + p \tri p \hoc{\alpha} \tri p \hoc{\beta} \tri p \hoc{\gamma}} & {p \hoc{\beta + \gamma} + p \tri p \hoc{\alpha} \tri p \hoc{\beta + \gamma}} \\
	{p\hoc{\beta} \tri p \hoc{\gamma} + p \tri p \hoc{\alpha + \beta} \tri p \hoc{\gamma}} & {p\hoc{\beta + \gamma} + p \hoc {\alpha + \beta + \gamma}} \\
	{p\hoc{\alpha + \beta + 1} \tri p \hoc{\gamma}} & {p\hoc{\alpha + \beta + \gamma + 1}}
	\arrow["{\mu\hoc{\beta, \gamma} + p \tri p\hoc{\alpha} \tri \mu\hoc{\beta, \gamma}}", from=1-1, to=1-2]
	\arrow["{p\hoc{\beta} \tri p\hoc{\gamma} + p \tri \mu\hoc{\alpha, \beta}\tri p \hoc{\gamma}}"', from=1-1, to=2-1]
	\arrow["{(\iota\hoc{\beta}\tri p\hoc{\gamma}, \lambda\hoc{\alpha + \beta}\tri p\hoc{\gamma})}"', from=2-1, to=3-1]
	\arrow["{p\hoc{\beta + \gamma} + p \tri \mu\hoc{\alpha, \beta + \gamma}}", from=1-2, to=2-2]
	\arrow["{(\iota\hoc{\beta + \gamma}, \lambda\hoc{\alpha + \beta + \gamma})}", from=2-2, to=3-2]
	\arrow["{\mu\hoc{\beta, \gamma} + p \hoc{\alpha} \mu\hoc{\alpha + \beta, \gamma}}", from=2-1, to=2-2]
	\arrow["{\mu\hoc{\alpha + \beta + 1, \gamma}}"', from=3-1, to=3-2]
\end{tikzcd}\]

The top square commutes by the induction hypothesis. To check that the bottom squares commute, it suffices to show that the following diagrams commute as well.

% https://q.uiver.app/#q=WzAsNCxbMCwwLCJwXFxob2N7XFxiZXRhfVxcdHJpIHBcXGhvY3tcXGdhbW1hfSJdLFsxLDAsInBcXGhvY3tcXGJldGEgKyBcXGdhbW1hfSJdLFswLDEsInBcXGhvY3tcXGFscGhhICsgXFxiZXRhICsgMX0gXFx0cmkgcCBcXGhvY3tcXGdhbW1hfSJdLFsxLDEsInBcXGhvY3tcXGFscGhhICsgXFxiZXRhICsgXFxnYW1tYSArIDF9Il0sWzAsMSwiXFxtdVxcaG9je1xcYmV0YSwgXFxnYW1tYX0iXSxbMiwzLCJcXG11XFxob2N7XFxhbHBoYSArIFxcYmV0YSArIDEsIFxcZ2FtbWF9IiwyXSxbMCwyLCJcXGlvdGFcXGhvY3tcXGJldGF9IFxcdHJpIHAgXFxob2N7XFxnYW1tYX0iLDJdLFsxLDMsIlxcaW90YVxcaG9je1xcYmV0YSArIFxcZ2FtbWF9Il1d
\[\begin{tikzcd}[column sep=50pt, row sep = large]
	{p\hoc{\beta}\tri p\hoc{\gamma}} & {p\hoc{\beta + \gamma}} \\
	{p\hoc{\alpha + \beta + 1} \tri p \hoc{\gamma}} & {p\hoc{\alpha + \beta + \gamma + 1}}
	\arrow["{\mu\hoc{\beta, \gamma}}", from=1-1, to=1-2]
	\arrow["{\mu\hoc{\alpha + \beta + 1, \gamma}}"', from=2-1, to=2-2]
	\arrow["{\iota\hoc{\beta} \tri p \hoc{\gamma}}"', from=1-1, to=2-1]
	\arrow["{\iota\hoc{\beta + \gamma}}", from=1-2, to=2-2]
\end{tikzcd}\qqand
% https://q.uiver.app/#q=WzAsNCxbMCwwLCJwIFxcdHJpIHBcXGhvY3tcXGFscGhhICsgXFxiZXRhfSBcXHRyaSBwIFxcaG9je1xcZ2FtbWF9Il0sWzEsMCwicCBcXHRyaSBwIFxcaG9je1xcYWxwaGEgKyBcXGJldGEgKyBcXGdhbW1hfSJdLFswLDEsInAgXFxob2N7XFxhbHBoYSArIFxcYmV0YSArIDF9IFxcdHJpIHAgXFxob2N7XFxnYW1tYX0iXSxbMSwxLCJwIFxcaG9je1xcYWxwaGEgKyBcXGJldGEgKyBcXGdhbW1hICsgMX0iXSxbMCwxLCJwIFxcdHJpIFxcbXUgXFxob2N7XFxhbHBoYSArIFxcYmV0YSwgXFxnYW1tYX0iXSxbMiwzLCJcXG11IFxcaG9je1xcYWxwaGEgKyBcXGJldGEgKyAxLCBcXGdhbW1hfSIsMl0sWzEsMywiXFxsYW1iZGFcXGhvY3tcXGFscGhhICsgXFxiZXRhICsgXFxnYW1tYX0iXSxbMCwyLCJcXGxhbWJkYVxcaG9je1xcYWxwaGEgKyBcXGJldGF9IFxcdHJpIHAgXFxob2Mge1xcZ2FtbWF9IiwyXV0=
\begin{tikzcd}[column sep=50pt, row sep = large]
	{p \tri p\hoc{\alpha + \beta} \tri p \hoc{\gamma}} & {p \tri p \hoc{\alpha + \beta + \gamma}} \\
	{p \hoc{\alpha + \beta + 1} \tri p \hoc{\gamma}} & {p \hoc{\alpha + \beta + \gamma + 1}}
	\arrow["{p \tri \mu \hoc{\alpha + \beta, \gamma}}", from=1-1, to=1-2]
	\arrow["{\mu \hoc{\alpha + \beta + 1, \gamma}}"', from=2-1, to=2-2]
	\arrow["{\lambda\hoc{\alpha + \beta + \gamma}}", from=1-2, to=2-2]
	\arrow["{\lambda\hoc{\alpha + \beta} \tri p \hoc {\gamma}}"', from=1-1, to=2-1]
\end{tikzcd}\]
The diagram on the left commutes by Equation~\eqref{eq:p-alpha-beta}.
The diagram on the right commutes by the inductive definition of $\mu\hoc{\alpha + \beta + 1, \gamma}$.

Finally, suppose that $\alpha$ is a limit ordinal and suppose that the diagram commutes for all $\alpha' < \alpha$. Recall that $p\hoc \alpha = \colim_{\alpha' < \alpha} p \hoc{\alpha'}$. So it suffices to show that the outer diagram in the following diagram commutes. 

% https://q.uiver.app/#q=WzAsNyxbMCwwLCJwXFxob2N7XFxhbHBoYSd9IFxcdHJpIHAgXFxob2N7XFxiZXRhfSBcXHRyaSBwIFxcaG9jIHtcXGdhbW1hfSJdLFsxLDAsInBcXGhvY3tcXGFscGhhJ30gXFx0cmkgcFxcaG9je1xcYmV0YSArIFxcZ2FtbWF9Il0sWzIsMCwiXFxsZWZ0KFxcY29saW1fe1xcYWxwaGEnIDwgXFxhbHBoYX0gcCBcXGhvY3tcXGFscGhhJ31cXHJpZ2h0KSBcXHRyaSBwXFxob2N7XFxiZXRhICsgXFxnYW1tYX0iXSxbMCwxLCJwXFxob2N7XFxhbHBoYScgKyBcXGJldGF9IFxcdHJpIHAgXFxob2N7XFxnYW1tYX0iXSxbMSwxLCJwIFxcaG9je1xcYWxwaGEnICsgXFxiZXRhICsgXFxnYW1tYX0iXSxbMCwyLCJcXGxlZnQoXFxjb2xpbV97XFxhbHBoYScgPCBcXGFscGhhIH0gcCBcXGhvY3tcXGFscGhhJyArIFxcYmV0YX1cXHJpZ2h0KSBcXHRyaSBwIFxcaG9jIHtcXGdhbW1hfSJdLFsyLDIsIlxcY29saW1fe1xcYWxwaGEnIDwgXFxhbHBoYX0gcCBcXGhvY3tcXGFscGhhJyArIFxcYmV0YSArIFxcZ2FtbWF9ICJdLFs1LDYsIlxcbXVcXGhvY3tcXGFscGhhICsgXFxiZXRhLCBcXGdhbW1hfSIsMl0sWzIsNiwiXFxtdSBcXGhvY3tcXGFscGhhLCBcXGJldGEgKyBcXGdhbW1hfSJdLFs0LDZdLFszLDQsIlxcbXUgXFxob2N7XFxhbHBoYScgKyBcXGJldGEsIFxcZ2FtbWF9IiwyXSxbMSw0LCJcXG11XFxob2N7XFxhbHBoYScsIFxcYmV0YSArIFxcZ2FtbWF9Il0sWzAsMywiXFxtdVxcaG9je1xcYWxwaGEnLCBcXGJldGF9IFxcdHJpIHAgXFxob2N7XFxnYW1tYX0iLDJdLFszLDVdLFsxLDJdLFswLDEsInAgXFxob2N7XFxhbHBoYSd9IFxcdHJpIFxcbXVcXGhvY3tcXGJldGEgLCBcXGdhbW1hfSJdXQ==
\[\begin{tikzcd}[sep=huge]
	{p\hoc{\alpha'} \tri p \hoc{\beta} \tri p \hoc {\gamma}} & {p\hoc{\alpha'} \tri p\hoc{\beta + \gamma}} & {\left(\colim_{\alpha' < \alpha} p \hoc{\alpha'}\right) \tri p\hoc{\beta + \gamma}} \\
	{p\hoc{\alpha' + \beta} \tri p \hoc{\gamma}} & {p \hoc{\alpha' + \beta + \gamma}} \\
	{\left(\colim_{\alpha' < \alpha } p \hoc{\alpha' + \beta}\right) \tri p \hoc {\gamma}} && {\colim_{\alpha' < \alpha} p \hoc{\alpha' + \beta + \gamma} }
	\arrow["{\mu\hoc{\alpha + \beta, \gamma}}"', from=3-1, to=3-3]
	\arrow["{\mu \hoc{\alpha, \beta + \gamma}}", from=1-3, to=3-3]
	\arrow[from=2-2, to=3-3]
	\arrow["{\mu \hoc{\alpha' + \beta, \gamma}}"', from=2-1, to=2-2]
	\arrow["{\mu\hoc{\alpha', \beta + \gamma}}", from=1-2, to=2-2]
	\arrow["{\mu\hoc{\alpha', \beta} \tri p \hoc{\gamma}}"', from=1-1, to=2-1]
	\arrow[from=2-1, to=3-1]
	\arrow[from=1-2, to=1-3]
	\arrow["{p \hoc{\alpha'} \tri \mu\hoc{\beta , \gamma}}", from=1-1, to=1-2]
\end{tikzcd}\]
The upper left square commutes by the induction hypothesis and the remaining diagrams commute by the definition of $\freem{p}$ for limit ordinals. 
\end{proof}

\freef*
\label{pf:freef}
\begin{proof}
    To show that $\free_f$ respects the identity it suffices to show that the outer diagram in the following commutes: 
% https://q.uiver.app/#q=WzAsNSxbMCwwLCJcXHlvbiJdLFsxLDAsInBcXGhvY3swfSJdLFsxLDEsInFcXGhvY3swfSJdLFsyLDAsIlxcZnJlZV9wIl0sWzIsMSwiXFxmcmVlX3EiXSxbMCwxLCJcXGV0YSJdLFsxLDIsImZcXGhvY3swfSJdLFswLDIsIlxcZXRhIiwyXSxbMSwzLCJcXGlvdGFcXGhvY3swfSJdLFsyLDQsIlxcaW90YVxcaG9jezB9IiwyXSxbMyw0LCJcXGZyZWVfZiJdXQ==
\[\begin{tikzcd}
	\yon & {p\hoc{0}} & {\free_p} \\
	& {q\hoc{0}} & {\free_q}
	\arrow["\yon", from=1-1, to=1-2]
	\arrow["{f\hoc{0}}", from=1-2, to=2-2]
	\arrow["\yon"', from=1-1, to=2-2]
	\arrow["{\iota\hoc{0}}", from=1-2, to=1-3]
	\arrow["{\iota\hoc{0}}"', from=2-2, to=2-3]
	\arrow["{\free_f}", from=1-3, to=2-3]
\end{tikzcd}\]
The left triangle commutes because each map is the identity on $\yon$. The square commutes as it is the diagram in Equation~\eqref{eq:f-iota-commute}.

To show that $\free_f$ respects multiplication, it suffices to show that the following diagram commutes for all $\alpha$ and $\beta$.
% https://q.uiver.app/#q=WzAsNCxbMCwwLCJwXFxob2N7XFxhbHBoYX0gXFx0cmkgcCBcXGhvY3tcXGJldGF9Il0sWzEsMCwicFxcaG9je1xcYWxwaGEgKyBcXGJldGF9Il0sWzAsMSwicVxcaG9je1xcYWxwaGF9IFxcdHJpIHFcXGhvY3tcXGJldGF9Il0sWzEsMSwicVxcaG9je1xcYWxwaGEgKyBcXGJldGF9Il0sWzAsMSwiXFxtdVxcaG9je1xcYWxwaGEsIFxcYmV0YX0iXSxbMiwzLCJcXG11XFxob2N7XFxhbHBoYSwgXFxiZXRhfSIsMl0sWzEsMywiZlxcaG9je1xcYWxwaGEgKyBcXGJldGF9Il0sWzAsMiwiZlxcaG9je1xcYWxwaGF9IFxcdHJpIGZcXGhvY3tcXGJldGF9IiwyXV0=
\[\begin{tikzcd}
	{p\hoc{\alpha} \tri p \hoc{\beta}} & {p\hoc{\alpha + \beta}} \\
	{q\hoc{\alpha} \tri q\hoc{\beta}} & {q\hoc{\alpha + \beta}}
	\arrow["{\mu\hoc{\alpha, \beta}}", from=1-1, to=1-2]
	\arrow["{\mu\hoc{\alpha, \beta}}"', from=2-1, to=2-2]
	\arrow["{f\hoc{\alpha + \beta}}", from=1-2, to=2-2]
	\arrow["{f\hoc{\alpha} \tri f\hoc{\beta}}"', from=1-1, to=2-1]
\end{tikzcd}\]
We show this by induction on $\alpha$. For $\alpha = 0$ this diagram commutes because the horizontal maps are the identity and the vertical maps are $f\hoc{\beta}$. Suppose that the diagram commutes for $\alpha$. Then for the successor ordinal $\alpha + 1$, following the definitions of $\mu\hoc{\alpha + 1, \beta}$ and $f\hoc{\alpha + 1, \beta}$ we want to show that the following diagram commutes.
% https://q.uiver.app/#q=WzAsNCxbMCwwLCIgcCBcXGhvY3tcXGJldGF9ICsgcCBcXHRyaSBwIFxcaG9je1xcYWxwaGF9IFxcdHJpIHAgXFxob2N7XFxiZXRhfSJdLFsxLDAsInBcXGhvY3tcXGFscGhhICsgXFxiZXRhICsgMX0iXSxbMCwxLCIgcSBcXGhvY3tcXGJldGF9ICsgcSBcXHRyaSBxIFxcaG9je1xcYWxwaGF9IFxcdHJpIHEgXFxob2N7XFxiZXRhfSJdLFsxLDEsInFcXGhvY3tcXGFscGhhICsgXFxiZXRhICsgMX0iXSxbMCwxLCJbXFxpb3RhXFxob2N7XFxiZXRhfSwocCBcXHRyaSBcXG11IFxcaG9je1xcYWxwaGEsIFxcYmV0YX0pIFxcdGhlbiBcXGxhbWJkYVxcaG9je1xcYWxwaGEgKyBcXGJldGF9XSJdLFsyLDMsIltcXGlvdGFcXGhvY3tcXGJldGF9LCAocCBcXHRyaSBcXG11IFxcaG9je1xcYWxwaGEsIFxcYmV0YX0pIFxcdGhlbiBcXGxhbWJkYVxcaG9je1xcYWxwaGEgKyBcXGJldGF9XSIsMl0sWzEsMywiZlxcaG9je1xcYWxwaGEgKyBcXGJldGEgKyAxfSJdLFswLDIsImZcXGhvY3tcXGJldGF9ICsgZiBcXHRyaSBmXFxob2N7XFxhbHBoYX0gXFx0cmkgZlxcaG9je1xcYmV0YX0iLDJdXQ==
\[\begin{tikzcd}[column sep = 9em, row sep = large]
	{ p \hoc{\beta} + p \tri p \hoc{\alpha} \tri p \hoc{\beta}} & {p\hoc{\alpha + \beta + 1}} \\
	{ q \hoc{\beta} + q \tri q \hoc{\alpha} \tri q \hoc{\beta}} & {q\hoc{\alpha + \beta + 1}}
	\arrow["{(\iota\hoc{\beta},(p \tri \mu \hoc{\alpha, \beta}) \then \lambda\hoc{\alpha + \beta})}", from=1-1, to=1-2]
	\arrow["{(\iota\hoc{\beta}, (p \tri \mu \hoc{\alpha, \beta}) \then \lambda\hoc{\alpha + \beta})}"', from=2-1, to=2-2]
	\arrow["{f\hoc{\alpha + \beta + 1}}", from=1-2, to=2-2]
	\arrow["{f\hoc{\beta} + f \tri f\hoc{\alpha} \tri f\hoc{\beta}}"', from=1-1, to=2-1]
\end{tikzcd}\]
The diagram commutes on the first term of the coproduct because the diagram in Equation~\eqref{eq:f-iota-commute} commutes with the pair $\beta < \alpha + \beta$. To show that the diagram commutes on the second term, it suffices to show that the outer diagram in the following commutes.
% https://q.uiver.app/#q=WzAsNixbMCwwLCJwIFxcdHJpIHAgXFxob2N7XFxhbHBoYX0gXFx0cmkgcCBcXGhvY3tcXGJldGF9Il0sWzEsMCwicCBcXHRyaSBwIFxcaG9je1xcYWxwaGEgKyBcXGJldGF9Il0sWzIsMCwicCBcXGhvY3tcXGFscGhhICsgXFxiZXRhICsgMX0iXSxbMCwxLCJxXFx0cmkgcVxcaG9je1xcYWxwaGF9IFxcdHJpIHEgXFxob2N7XFxiZXRhfSJdLFsxLDEsInFcXHRyaSBxXFxob2N7XFxhbHBoYSArIFxcYmV0YX0iXSxbMiwxLCJxXFxob2N7XFxhbHBoYSArIFxcYmV0YSArIDF9Il0sWzAsMywiZiBcXHRyaSBmXFxob2N7XFxhbHBoYX0gXFx0cmkgZlxcaG9je1xcYmV0YX0iLDJdLFswLDEsInAgXFx0cmkgXFxtdVxcaG9je1xcYWxwaGEsIFxcYmV0YX0iXSxbMyw0LCJxXFx0cmkgXFxtdVxcaG9je1xcYWxwaGEsIFxcYmV0YX0iLDJdLFsxLDQsImYgXFx0cmkgZlxcaG9je1xcYWxwaGEgKyBcXGJldGF9Il0sWzIsNSwiZlxcaG9je1xcYWxwaGEgKyBcXGJldGEgKyAxfSJdLFsxLDIsIlxcbGFtYmRhXFxob2N7XFxhbHBoYSArIFxcYmV0YX0iXSxbNCw1LCJcXGxhbWJkYSBcXGhvY3tcXGFscGhhICsgXFxiZXRhfSIsMl1d
\[\begin{tikzcd}[sep=huge]
	{p \tri p \hoc{\alpha} \tri p \hoc{\beta}} & {p \tri p \hoc{\alpha + \beta}} & {p \hoc{\alpha + \beta + 1}} \\
	{q\tri q\hoc{\alpha} \tri q \hoc{\beta}} & {q\tri q\hoc{\alpha + \beta}} & {q\hoc{\alpha + \beta + 1}}
	\arrow["{f \tri f\hoc{\alpha} \tri f\hoc{\beta}}"', from=1-1, to=2-1]
	\arrow["{p \tri \mu\hoc{\alpha, \beta}}", from=1-1, to=1-2]
	\arrow["{q\tri \mu\hoc{\alpha, \beta}}"', from=2-1, to=2-2]
	\arrow["{f \tri f\hoc{\alpha + \beta}}", from=1-2, to=2-2]
	\arrow["{f\hoc{\alpha + \beta + 1}}", from=1-3, to=2-3]
	\arrow["{\lambda\hoc{\alpha + \beta}}", from=1-2, to=1-3]
	\arrow["{\lambda \hoc{\alpha + \beta}}"', from=2-2, to=2-3]
\end{tikzcd}\]
The left-hand square commutes by the induction hypothesis and the right-hand square commutes by definition of $f\hoc{\alpha + \beta + 1}$.
\end{proof}

\subsection{Proof of Theorem~\ref{thm:free-adjunction}}
\label{apx:adjunction}
The trickiest part of proving the adjunction in Theorem~\ref{thm:free-adjunction} is defining the co-unit so we do it separately in the following series of Lemmas.

\newcommand{\cou}[2]{\epsilon\hoc{#2}}  % counit
\newcommand{\couq}[1]{\cou{q}{#1}}      % counit-q
\begin{lemma}
    Let $(q, \eta_q, \mu_q)$ be a $\tri$-monoid. For each ordinal $\alpha$, there exist a cocone of maps
    \[
        \couq{\alpha} \colon q\hoc{\alpha} \to q.
    \]
\end{lemma}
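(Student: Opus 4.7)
The plan is to define the maps $\couq{\alpha}$ by transfinite induction on $\alpha$, using the monad structure $(\eta_q,\mu_q)$ on $q$ essentially, and then to verify the cocone condition $\couq{\beta}\circ\iota\hoc{\alpha}=\couq{\alpha}$ for $\alpha<\beta$ by a second transfinite induction running in parallel.

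For the construction, the base case sets $\couq{0}\coloneqq\eta_q\colon\yon\to q$. In the successor step, given $\couq{\alpha}$, recall that $q\hoc{\alpha+1}=\yon+q\tri q\hoc{\alpha}$, and define $\couq{\alpha+1}$ to be the copairing of $\eta_q\colon\yon\to q$ with the composite
\[
q\tri q\hoc{\alpha}\xrightarrow{q\tri\couq{\alpha}}q\tri q\xrightarrow{\mu_q}q.
\]
For a limit $\alpha$, the maps $\couq{\alpha'}$ for $\alpha'<\alpha$ will already form a cocone on $\alpha'\mapsto q\hoc{\alpha'}$ by the inductive hypothesis on the cocone property, and the universal property of $q\hoc{\alpha}=\colim_{\alpha'<\alpha}q\hoc{\alpha'}$ induces $\couq{\alpha}$.

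For the cocone property, since any $\iota\hoc{\alpha}\colon q\hoc{\alpha}\to q\hoc{\beta}$ factors as a composite of one-step inclusions (the case of limit $\beta$ being immediate from the very definition of $\couq{\beta}$ via the colimit), it suffices to prove $\couq{\alpha+1}\circ\iota\hoc{\alpha}=\couq{\alpha}$ for every ordinal $\alpha$, by a further transfinite induction on $\alpha$. When $\alpha=0$, $\iota\hoc{0}$ is the first coproduct injection $\yon\hookrightarrow\yon+q$, which picks out the $\eta_q$ leg of $\couq{1}$. For a successor $\alpha+1$, using $\iota\hoc{\alpha+1}=\yon+q\tri\iota\hoc{\alpha}$ and applying the inductive hypothesis inside the $q\tri-$ slot gives the compatibility. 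For limit $\alpha$, the map $\iota\hoc{\alpha}\colon q\hoc{\alpha}\to q\hoc{\alpha+1}$ is defined via the cocone in Eq.~\eqref{eq:limit-to-limit-plus-one}, whose leg at $\alpha'<\alpha$ factors through $\iota\hoc{\alpha'}\colon q\hoc{\alpha'}\to q\hoc{\alpha'+1}$; postcomposing with $\couq{\alpha+1}$ and applying the already-established successor-step compatibility at $\alpha'$ reduces the claim at each leg to $\couq{\alpha'}=\couq{\alpha}\circ\iota\hoc{\alpha'}$, which holds by the defining universal property of $\couq{\alpha}$ on the colimit.

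The main obstacle will be this last limit-to-successor case, since $\iota\hoc{\alpha}$ at a limit $\alpha$ is constructed indirectly through a cocone rather than as a simple coprojection; the point is that the cocone defining $\iota\hoc{\alpha}$ and the cocone defining $\couq{\alpha}$ are assembled out of compatible one-step data, so that the two universal properties align. Notably, neither the associativity of $\mu_q$ nor the unit laws are needed for this lemma; those will only come into play later, when proving that the induced map $\free_q\to q$ is itself a map of $\tri$-monoids.
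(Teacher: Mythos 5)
Your proposal is correct and follows essentially the same route as the paper: the same transfinite definition ($\eta_q$ at $0$, copairing of $\eta_q$ with $\mu_q\circ(q\tri\couq{\alpha})$ at successors, the colimit's universal property at limits), the same reduction of the cocone condition to the one-step compatibilities $\couq{\alpha+1}\circ\iota\hoc{\alpha}=\couq{\alpha}$, and the same three-way case analysis including the delicate limit-to-successor step via the cocone of Equation~\eqref{eq:limit-to-limit-plus-one}. Your closing observation that the monoid axioms for $(q,\eta_q,\mu_q)$ are not yet needed here is accurate and consistent with the paper, which only invokes unitality and associativity in the subsequent Lemma~\ref{lem:cou-mu-commute} and in the proof of Theorem~\ref{thm:free-adjunction}.
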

\begin{proof}
    We will define the cocone of maps $\couq{\alpha}$ inductively.
    First, define $\couq{0} \colon q \hoc{0} = \yon \to q$ to be the unit $\eta_q$.

    Next, suppose that we have a cocone of maps $\couq{\alpha'}: q\hoc{\alpha'} \to q$ for all $\alpha' < \alpha$. 
    If $\alpha$ is a limit ordinal then we define $\couq{\alpha}\colon q\hoc{\alpha} \to q$ to be the universal map induced by the cocone. If $\alpha + 1$ is a successor ordinal, then we define $\couq{\alpha + 1}$ to be the composite
    \[
        q\hoc{\alpha + 1} = \yon + q \tri q \hoc{\alpha} \xrightarrow{\eta_q + q \tri \couq{\alpha}} q + q \tri q \xrightarrow{(1, \mu_q)}q.
    \]

    To show that these maps form a cocone, it suffices to show that the following diagram commutes for all $\alpha' < \alpha$.
    % https://q.uiver.app/#q=WzAsMyxbMCwwLCJxXFxob2N7XFxhbHBoYSd9Il0sWzAsMSwicVxcaG9je1xcYWxwaGF9Il0sWzEsMSwicSJdLFswLDIsIlxcY291cXtcXGFscGhhJ30iXSxbMSwyLCJcXGNvdXF7XFxhbHBoYX0iLDJdLFswLDEsIlxcaW90YVxcaG9je1xcYWxwaGEnfSIsMl1d
    \[\begin{tikzcd}
    	{q\hoc{\alpha'}} \\
    	{q\hoc{\alpha}} & q
    	\arrow["{\couq{\alpha'}}", from=1-1, to=2-2]
    	\arrow["{\couq{\alpha}}"', from=2-1, to=2-2]
    	\arrow["{\iota\hoc{\alpha'}}"', from=1-1, to=2-1]
    \end{tikzcd}\]

    If $\alpha$ is a limit ordinal, then this is immediate. Otherwise, $\alpha$ is a successor ordinal, say $\alpha = \beta + 1$. It suffices to show that the following diagram commutes. 
    % https://q.uiver.app/#q=WzAsMyxbMCwwLCJxXFxob2N7XFxiZXRhfSJdLFswLDEsInFcXGhvY3tcXGJldGEgKyAxfSJdLFsxLDEsInEiXSxbMCwyLCJcXGNvdXF7XFxiZXRhfSJdLFsxLDIsIlxcY291cXtcXGJldGEgKyAxfSIsMl0sWzAsMSwiXFxpb3RhXFxob2N7XFxiZXRhfSIsMl1d
    \[\begin{tikzcd}
    	{q\hoc{\beta}} \\
    	{q\hoc{\beta + 1}} & q
    	\arrow["{\couq{\beta}}", from=1-1, to=2-2]
    	\arrow["{\couq{\beta + 1}}"', from=2-1, to=2-2]
    	\arrow["{\iota\hoc{\beta}}"', from=1-1, to=2-1]
    \end{tikzcd}\]
    
    Consider the following three cases.
    \begin{itemize}
        \item Suppose that $\beta = 0$, then it is immediate that the desired diagram commutes as we see below.
        % https://q.uiver.app/#q=WzAsNCxbMCwwLCJxXFxob2N7MH0gPSBcXHlvbiJdLFswLDEsInFcXGhvY3sxfSA9IFxceW9uICsgcSBcXHRyaSBcXHlvbiJdLFsxLDEsInEgKyBxIl0sWzIsMSwicSJdLFswLDEsIlxcaW90YVxcaG9jezB9IiwyXSxbMiwzLCJbMSwxXSIsMl0sWzEsMiwiXFxldGFfcSArIFxcbXVfcSBcXGNpcmMocSBcXHRyaSBcXGV0YV9xKSIsMl0sWzAsMywiXFxldGFfcSJdXQ==
        \[\begin{tikzcd}[column sep = 60pt]
        	{q\hoc{0} = \yon} \\
        	{q\hoc{1} = \yon + q \tri \yon} & {q + q} & q
        	\arrow["{\iota\hoc{0}}"', from=1-1, to=2-1]
        	\arrow["{(1,1)}"', from=2-2, to=2-3]
        	\arrow["{\eta_q + \mu_q \circ(q \tri \eta_q)}"', from=2-1, to=2-2]
        	\arrow["{\eta_q}", bend left=10pt, from=1-1, to=2-3]
        \end{tikzcd}\]

        \item Next suppose that $\beta$ is a successor ordinal. In particular $\beta = \gamma + 1$. 
        To show that 
        % https://q.uiver.app/#q=WzAsMyxbMCwwLCJxXFxob2N7XFxhbHBoYSArIDF9Il0sWzAsMSwicVxcaG9je1xcYWxwaGEgKyAyfSJdLFsxLDEsInEiXSxbMCwxLCJcXGlvdGFcXGhvY3tcXGFscGhhICsgMX0iLDJdLFswLDIsIlxcY291cXtcXGFscGhhICsgMX0iXSxbMSwyLCJcXGNvdXF7XFxhbHBoYSArIDJ9IiwyXV0=
        \[\begin{tikzcd}[sep = large]
        	{q\hoc{\gamma + 1}} \\
        	{q\hoc{\gamma + 2}} & q
        	\arrow["{\iota\hoc{\gamma + 1}}"', from=1-1, to=2-1]
        	\arrow["{\couq{\gamma + 1}}", from=1-1, to=2-2]
        	\arrow["{\couq{\gamma + 2}}"', from=2-1, to=2-2]
        \end{tikzcd}\]
        commutes, it suffices to show that 
        % https://q.uiver.app/#q=WzAsMyxbMCwwLCJcXHlvbiArIHEgXFx0cmkgcSBcXGhvY3tcXGFscGhhfSJdLFswLDEsIlxceW9uICsgcSBcXHRyaSBxIFxcaG9je1xcYWxwaGEgKyAxfSJdLFsxLDEsInEgKyBxIFxcdHJpIHEiXSxbMSwyLCJcXGV0YSArIHEgXFx0cmkgXFxjb3Vxe1xcYWxwaGEgKyAxfSIsMl0sWzAsMSwiXFx5b24gKyBxIFxcdHJpIFxcaW90YVxcaG9je1xcYWxwaGF9IiwyXSxbMCwyLCJcXGV0YV9xICsgcSBcXHRyaSBcXGNvdXF7XFxhbHBoYX0iXV0=
        \[\begin{tikzcd}[sep = huge]
        	{\yon + q \tri q \hoc{\gamma}} \\
        	{\yon + q \tri q \hoc{\gamma + 1}} & {q + q \tri q}
        	\arrow["{\eta_q + q \tri \couq{\gamma + 1}}"', from=2-1, to=2-2]
        	\arrow["{\yon + q \tri \iota\hoc{\gamma}}"', from=1-1, to=2-1]
        	\arrow["{\eta_q + q \tri \couq{\gamma}}", from=1-1, to=2-2]
        \end{tikzcd}\]
        commutes. This is immediate from the induction hypothesis. 
        \item Finally, suppose that $\beta$ is a limit ordinal. Then we want to show that the outer diagram in the following commutes. 

        % https://q.uiver.app/#q=WzAsNSxbMCwwLCJxXFxob2N7XFxiZXRhJ30iXSxbMCwxLCJxXFxob2N7XFxiZXRhJyArIDF9ID0gXFx5b24gKyBxIFxcdHJpIHFcXGhvY3tcXGJldGEnfSJdLFswLDIsIlxceW9uICsgcSBcXHRyaSBxXFxob2N7XFxiZXRhfSJdLFsxLDIsInEgKyBxXFx0cmkgcSJdLFsyLDIsInEiXSxbMCwxLCJcXGlvdGFcXGhvY3tcXGJldGEnfSIsMl0sWzEsMiwiXFx5b24gKyBxIFxcdHJpIFxcaW90YVxcaG9je1xcYmV0YSd9IiwyXSxbMyw0LCJbMSwgXFxtdV9xXSIsMl0sWzAsNCwiXFxjb3Vxe1xcYmV0YSd9Il0sWzIsMywiXFxldGFfcSArIHEgXFx0cmkgXFxjb3Vxe1xcYmV0YX0iLDJdLFsxLDQsIlxcY291cXtcXGJldGEnICsgMX0iLDFdLFsxLDMsIlxcZXRhX3EgKyBxIFxcdHJpIFxcY291cXtcXGJldGEnfSIsMV1d
        \[\begin{tikzcd}[sep = huge]
        	{q\hoc{\beta'}} \\
        	{q\hoc{\beta' + 1} = \yon + q \tri q\hoc{\beta'}} \\
        	{\yon + q \tri q\hoc{\beta}} & {q + q\tri q} & q
        	\arrow["{\iota\hoc{\beta'}}"', from=1-1, to=2-1]
        	\arrow["{\yon + q \tri \iota\hoc{\beta'}}"', from=2-1, to=3-1]
        	\arrow["{(1, \mu_q)}"', from=3-2, to=3-3]
        	\arrow["{\couq{\beta'}}", from=1-1, to=3-3]
        	\arrow["{\eta_q + q \tri \couq{\beta}}"', from=3-1, to=3-2]
        	\arrow["{\couq{\beta' + 1}}"{description}, from=2-1, to=3-3]
        	\arrow["{\eta_q + q \tri \couq{\beta'}}"{description}, from=2-1, to=3-2]
        \end{tikzcd}\]
        The top and bottom triangles commute by the induction hypothesis and the middle triangle commutes by definition of $\couq{\beta' + 1}$.
    \end{itemize}

\end{proof}

\begin{lemma}\label{lem:cou-mu-commute}
    For all ordinals $\alpha, \beta$ the following diagram commutes.
    % https://q.uiver.app/#q=WzAsNCxbMCwwLCJxXFxob2N7XFxhbHBoYX0gXFx0cmkgcVxcaG9je1xcYmV0YX0iXSxbMSwwLCJxXFxob2N7XFxhbHBoYSArIFxcYmV0YX0iXSxbMCwxLCJxXFx0cmkgcSJdLFsxLDEsInEiXSxbMiwzLCJcXG11X3EiLDJdLFsxLDMsIlxcY291cXtcXGFscGhhICsgXFxiZXRhfSJdLFswLDIsIlxcY291cXtcXGFscGhhfSBcXHRyaSBcXGNvdXF7XFxiZXRhfSIsMl0sWzAsMSwiXFxtdVxcaG9je1xcYWxwaGEsIFxcYmV0YX0iXV0=
    \[\begin{tikzcd}
    	{q\hoc{\alpha} \tri q\hoc{\beta}} & {q\hoc{\alpha + \beta}} \\
    	{q\tri q} & q
    	\arrow["{\mu_q}"', from=2-1, to=2-2]
    	\arrow["{\couq{\alpha + \beta}}", from=1-2, to=2-2]
    	\arrow["{\couq{\alpha} \tri \couq{\beta}}"', from=1-1, to=2-1]
    	\arrow["{\mu\hoc{\alpha, \beta}}", from=1-1, to=1-2]
    \end{tikzcd}\]
\end{lemma}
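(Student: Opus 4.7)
The plan is to prove the lemma by transfinite induction on $\alpha$, closely paralleling the transfinite construction of $\mu\hoc{\alpha,\beta}$ in \cref{lem:def-mu} and the construction of $\couq{\alpha}$ itself. The base case $\alpha=0$ is a straightforward application of the left unit law of the monad $q$: the map $\mu\hoc{0,\beta}$ is the identity on $q\hoc{\beta}$, and $\couq{0}=\eta_q$, so the diagram collapses to $\yon\tri q\hoc{\beta}\xrightarrow{\eta_q\tri\couq{\beta}}q\tri q\xrightarrow{\mu_q}q$, which equals $\couq{\beta}$.

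For the successor case $\alpha+1$, I would decompose $q\hoc{\alpha+1}\tri q\hoc{\beta}\cong q\hoc{\beta}+q\tri q\hoc{\alpha}\tri q\hoc{\beta}$ and verify that each component of the coproduct makes the outer diagram commute separately. On the $q\hoc{\beta}$ summand, the upper-right route becomes $\iota\hoc{\beta}$ followed by $\couq{\alpha+1+\beta}$, which by the cocone property of $\couq{-}$ equals $\couq{\beta}$; the lower-left route restricts $\couq{\alpha+1}$ to its $\yon$-component (which is $\eta_q$) and then uses left unitality of $\mu_q$ to also produce $\couq{\beta}$. On the $q\tri q\hoc{\alpha}\tri q\hoc{\beta}$ summand, the lower-left route uses the $q\tri q\hoc{\alpha}$-component of $\couq{\alpha+1}$, namely $(q\tri\couq{\alpha})\then\mu_q$; I would then apply associativity of $\mu_q$ to move it past $\couq{\beta}$, at which point the remaining equation becomes precisely the inductive hypothesis for the pair $(\alpha,\beta)$.

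For the limit case, I would use \cref{prop:colim-left-distribute} (applicable since each $\iota\hoc{\alpha'}$ is cartesian) to write $q\hoc{\alpha}\tri q\hoc{\beta}\cong\colim_{\alpha'<\alpha}(q\hoc{\alpha'}\tri q\hoc{\beta})$, and then verify commutativity after precomposition with each coprojection $\iota\hoc{\alpha'}\tri q\hoc{\beta}$. On the lower-left side, $\couq{\alpha}\circ\iota\hoc{\alpha'}=\couq{\alpha'}$ by the cocone property, reducing to $(\couq{\alpha'}\tri\couq{\beta})\then\mu_q$. On the upper-right side, the defining factorization of $\mu\hoc{\alpha,\beta}$ through $\colim_{\alpha'<\alpha}q\hoc{\alpha'+\beta}$ gives $\mu\hoc{\alpha,\beta}\circ(\iota\hoc{\alpha'}\tri q\hoc{\beta})=\iota\hoc{\alpha'+\beta}\circ\mu\hoc{\alpha',\beta}$, which when followed by $\couq{\alpha+\beta}$ telescopes to $\couq{\alpha'+\beta}\circ\mu\hoc{\alpha',\beta}$; by the inductive hypothesis this matches the lower-left side, and the universal property of the colimit concludes the argument.

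The main obstacle I expect is the successor step: the coproduct decomposition and the careful bookkeeping of which components of $\couq{\alpha+1}$ appear where are where the proof can become notationally unwieldy. The crucial move is spotting that associativity of $\mu_q$ is precisely what converts the pair $(\couq{\alpha}\tri\couq{\beta})\then\mu_q$, applied inside a $q\tri-$ context followed by an outer $\mu_q$, into a form where the inductive hypothesis directly applies. Once this move is isolated, the limit case is essentially formal.
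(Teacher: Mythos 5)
Your proposal is correct and follows essentially the same route as the paper: transfinite induction on $\alpha$, with the base case and the $\yon$-summand of the successor case handled by the unit law of $(q,\eta_q,\mu_q)$ together with the cocone property of $\couq{-}$, the second summand handled by associativity of $\mu_q$ plus the inductive hypothesis applied inside $q\tri(-)$ and the defining formula for $\couq{\alpha+\beta+1}$, and the limit case handled by comparing the two induced cocone maps and invoking uniqueness. No gaps worth flagging.
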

\begin{proof}
    We induct on $\alpha$.  Suppose that for all $\alpha' < \alpha$ the diagram commutes. Consider the following cases.

    \begin{itemize}
        \item For $\alpha = 0$, we have that 
        % https://q.uiver.app/#q=WzAsNCxbMCwwLCJcXHlvbiBcXHRyaSBxXFxob2N7XFxiZXRhfSJdLFsxLDAsInFcXGhvY3tcXGJldGF9Il0sWzAsMSwicVxcdHJpIHEiXSxbMSwxLCJxIl0sWzIsMywiXFxtdV9xIiwyXSxbMCwxLCIxX3txXFxob2N7XFxiZXRhfX0iXSxbMCwyLCJcXGV0YV9xIFxcdHJpIFxcY291cXtcXGJldGF9IiwyXSxbMSwzLCJcXGNvdXF7XFxiZXRhfSJdXQ==
        \[\begin{tikzcd}
        	{\yon \tri q\hoc{\beta}} & {q\hoc{\beta}} \\
        	{q\tri q} & q
        	\arrow["{\mu_q}"', from=2-1, to=2-2]
        	\arrow["{1_{q\hoc{\beta}}}", from=1-1, to=1-2]
        	\arrow["{\eta_q \tri \couq{\beta}}"', from=1-1, to=2-1]
        	\arrow["{\couq{\beta}}", from=1-2, to=2-2]
        \end{tikzcd}\]
        commutes by the unit law of the monoid $(q, \eta_q, \mu_q)$.

        \item Suppose that $\alpha$ is a successor ordinal, say $\alpha = \alpha' + 1$. We must show that the following diagram commutes. 
        % https://q.uiver.app/#q=WzAsNCxbMCwwLCJcXHlvbiBcXHRyaSBxXFxob2N7XFxiZXRhfSArIHEgXFx0cmkgcVxcaG9je1xcYWxwaGEnfSBcXHRyaSBxXFxob2N7XFxiZXRhfSJdLFswLDEsInFcXHRyaSBxIl0sWzEsMCwicVxcaG9je1xcYWxwaGEnICsgXFxiZXRhICsgMX0iXSxbMSwxLCJxIl0sWzAsMSwiW1xcZXRhIFxcdHJpIFxcY291cXtcXGJldGF9LCAoXFxtdSBfcSBcXGNpcmMocSBcXHRyaSBcXGNvdXF7XFxhbHBoYSd9KSkgXFx0cmkgXFxjb3Vxe1xcYmV0YX1dIiwyXSxbMCwyLCJbXFxpb3RhX1xcYmV0YSwgXFxsYW1iZGFcXGhvY3tcXGFscGhhJyArIFxcYmV0YX0gXFxjaXJjIChxIFxcdHJpIFxcbXVcXGhvY3tcXGFscGhhJywgXFxiZXRhfSkiXSxbMiwzLCJcXGNvdXF7XFxhbHBoYScgKyBcXGJldGEgKyAxfSJdLFsxLDMsIlxcbXVfcSIsMl1d
        \[\begin{tikzcd}[column sep = 9em, row sep = huge]
        	{\yon \tri q\hoc{\beta} + q \tri q\hoc{\alpha'} \tri q\hoc{\beta}} & {q\hoc{\alpha' + \beta + 1}} \\
        	{q\tri q} & q
        	\arrow["{(\eta \tri \couq{\beta}, (\mu _q \circ(q \tri \couq{\alpha'})) \tri \couq{\beta})}"', from=1-1, to=2-1]
        	\arrow["{(\iota_\beta, \lambda\hoc{\alpha' + \beta} \circ (q \tri \mu\hoc{\alpha', \beta}))}", from=1-1, to=1-2]
        	\arrow["{\couq{\alpha' + \beta + 1}}", from=1-2, to=2-2]
        	\arrow["{\mu_q}"', from=2-1, to=2-2]
        \end{tikzcd}\]
        We will show that the diagram commutes on each component of the coproduct independently. For the first term observe  that 
        % https://q.uiver.app/#q=WzAsNSxbMCwwLCJcXHlvbiBcXHRyaSBxXFxob2N7XFxiZXRhfSAiXSxbMCwxLCJxXFx0cmkgcSJdLFsxLDAsInFcXGhvY3tcXGJldGF9Il0sWzIsMCwicVxcaG9je1xcYWxwaGEnICsgXFxiZXRhICsgMX0iXSxbMSwxLCJxIl0sWzAsMSwiXFxldGEgXFx0cmkgXFxjb3Vxe1xcYmV0YX0iLDJdLFswLDIsIjFfe3FcXGhvY1xcYmV0YX0iXSxbMiwzLCJcXGlvdGFfXFxiZXRhIl0sWzIsNCwiXFxjb3Vxe1xcYmV0YX0iXSxbMSw0LCJcXG11X3EiLDJdLFszLDQsIlxcY291cXtcXGFscGhhJyArIFxcYmV0YSArIDF9Il1d
        \[\begin{tikzcd}
        	{\yon \tri q\hoc{\beta} } & {q\hoc{\beta}} & {q\hoc{\alpha' + \beta + 1}} \\
        	{q\tri q} & q
        	\arrow["{\eta \tri \couq{\beta}}"', from=1-1, to=2-1]
        	\arrow["{1_{q\hoc\beta}}", from=1-1, to=1-2]
        	\arrow["{\iota_\beta}", from=1-2, to=1-3]
        	\arrow["{\couq{\beta}}", from=1-2, to=2-2]
        	\arrow["{\mu_q}"', from=2-1, to=2-2]
        	\arrow["{\couq{\alpha' + \beta + 1}}", from=1-3, to=2-2]
        \end{tikzcd}\]
        commutes because of the unit law of the monoid $(q, \eta_q, \mu_q)$ and because the maps $\couq{\beta}$ form a cocone.

        For the second term consider the following diagram.

        % https://q.uiver.app/#q=WzAsNyxbMCwwLCJxXFx0cmkgcVxcaG9je1xcYWxwaGEnfSBcXHRyaSBxIFxcaG9je1xcYmV0YX0iXSxbMCwxLCJxXFx0cmkgcSBcXHRyaSBxIl0sWzAsMiwicVxcdHJpIHEiXSxbMSwxLCJxXFx0cmkgcSJdLFsyLDIsInEiXSxbMSwwLCJxIFxcdHJpIHFcXGhvY3tcXGFscGhhJyArIFxcYmV0YX0iXSxbMiwwLCJxXFxob2N7XFxhbHBoYScgKyBcXGJldGEgKyAxfSJdLFswLDEsInFcXHRyaSBcXGNvdXF7XFxhbHBoYSd9IFxcdHJpIFxcY291cXtcXGJldGF9IiwyXSxbMSwyLCJcXG11X3EgXFx0cmkgcSIsMl0sWzEsMywicSBcXHRyaSBcXG11X3EiLDJdLFszLDQsIlxcbXVfcSIsMl0sWzIsNCwiXFxtdV9xIiwyXSxbMCw1LCJxXFx0cmkgXFxtdVxcaG9je1xcYWxwaGEnLCBcXGJldGF9Il0sWzUsMywicVxcdHJpIFxcY291cXtcXGFscGhhJyArIFxcYmV0YX0iXSxbNSw2LCJcXGxhbWJkYVxcaG9je1xcYWxwaGEnICsgXFxiZXRhfSJdLFs2LDQsIlxcY291cXtcXGFscGhhJyArIFxcYmV0YSArIDF9Il1d
        \[\begin{tikzcd}[sep = huge]
        	{q\tri q\hoc{\alpha'} \tri q \hoc{\beta}} & {q \tri q\hoc{\alpha' + \beta}} & {q\hoc{\alpha' + \beta + 1}} \\
        	{q\tri q \tri q} & {q\tri q} \\
        	{q\tri q} && q
        	\arrow["{q\tri \couq{\alpha'} \tri \couq{\beta}}"', from=1-1, to=2-1]
        	\arrow["{\mu_q \tri q}"', from=2-1, to=3-1]
        	\arrow["{q \tri \mu_q}"', from=2-1, to=2-2]
        	\arrow["{\mu_q}"', from=2-2, to=3-3]
        	\arrow["{\mu_q}"', from=3-1, to=3-3]
        	\arrow["{q\tri \mu\hoc{\alpha', \beta}}", from=1-1, to=1-2]
        	\arrow["{q\tri \couq{\alpha' + \beta}}", from=1-2, to=2-2]
        	\arrow["{\lambda\hoc{\alpha' + \beta}}", from=1-2, to=1-3]
        	\arrow["{\couq{\alpha' + \beta + 1}}", from=1-3, to=3-3]
        \end{tikzcd}\]
        The upper left square commutes by the induction hypothesis. The bottom square commutes by associativity of $\mu_q$, and the right-most square commutes by definition of $\couq{\alpha' + \beta + 1}$. Therefore the outer square commutes as desired.

        \item Suppose that $\alpha$ is a limit ordinal. Recall that 
        \[
            q\hoc{\alpha} \tri q\hoc{\beta} \iso \colim_{\alpha' < \alpha} q\hoc{\alpha' + \beta}.
        \] 
        The composite $\couq{\alpha + \beta} \circ \mu\hoc{\alpha, \beta}$ is induced by the cocone defined for $\alpha' < \alpha$ by
        \[
            q\hoc{\alpha'} \tri q \hoc{\beta} \xrightarrow{\mu\hoc{\alpha', \beta}} q\hoc{\alpha' + \beta}\xrightarrow{\iota\hoc{\alpha' + \beta}} q \hoc{\alpha + \beta} \xrightarrow{\couq{\alpha + \beta}} q
        \] while the composite $\mu_q \circ (\couq{\alpha} \tri \couq{\beta})$ is induced by the cocone defined for $\alpha' < \alpha$ by 
        \[
            q\hoc{\alpha'} \tri q \hoc{\beta }\xrightarrow{\couq{\alpha'} \tri \couq{\beta}} q \tri q \xrightarrow{\mu_q}q.
        \] For each $\alpha' < \alpha$ these composites are identical by the induction hypothesis. Therefore by uniqueness of the induced maps \[\couq{\alpha + \beta} \circ \mu\hoc{\alpha, \beta} = \mu_q \circ (\couq{\alpha} \tri \couq{\beta})\] as desired.
    \end{itemize}
\end{proof}

\newcommand{\mcoun}{\epsilon}
\newcommand{\mun}{\zeta}

\freeadjunction*
\begin{proof}
    For $p: \poly$ define the unit of the adjunction, $\mun_p \colon p \to \free_p$, to be the composite
    \[
        p \xrightarrow{\lambda\hoc{0}} p\hoc{1} \xrightarrow{\iota\hoc{1}} \free_p.
    \]
    For $f\colon p \to q$ in $\poly$, consider the following diagram.
    % https://q.uiver.app/#q=WzAsNixbMCwwLCJwIl0sWzAsMSwicSJdLFsxLDEsIlxceW9uICsgcSA9IHFcXGhvY3sxfSJdLFsxLDAsIlxceW9uICsgcCAgPSBwXFxob2N7MX0iXSxbMiwwLCJcXGZyZWVfcCJdLFsyLDEsIlxcZnJlZV9xIl0sWzAsMSwiZiIsMl0sWzMsMiwiXFx5b24gKyBmID0gZlxcaG9jezF9Il0sWzAsMywiXFxsYW1iZGFcXGhvY3swfSJdLFsxLDIsIlxcbGFtYmRhXFxob2N7MH0iLDJdLFs0LDUsIlxcZnJlZV9mIl0sWzIsNSwiXFxpb3RhXFxob2N7MX0iLDJdLFszLDQsIlxcaW90YVxcaG9jezF9Il1d
    \[\begin{tikzcd}
    	p & {\yon + p  = p\hoc{1}} & {\free_p} \\
    	q & {\yon + q = q\hoc{1}} & {\free_q}
    	\arrow["f"', from=1-1, to=2-1]
    	\arrow["{\yon + f = f\hoc{1}}", from=1-2, to=2-2]
    	\arrow["{\lambda\hoc{0}}", from=1-1, to=1-2]
    	\arrow["{\lambda\hoc{0}}"', from=2-1, to=2-2]
    	\arrow["{\free_f}", from=1-3, to=2-3]
    	\arrow["{\iota\hoc{1}}"', from=2-2, to=2-3]
    	\arrow["{\iota\hoc{1}}", from=1-2, to=1-3]
    \end{tikzcd}\]
    The square on the left commutes by definition of $f\hoc{1}$ while the square on the right commutes by the commuting diagram in Equation~\eqref{eq:f-iota-commute}. Therefore, the outer square commutes and $\mun$ is natural.

    For the  $\tri$-monoid $(q, \eta_q, \mu_q)$, we define the counit of the adjunction $\mcoun_q \colon (\free_q, \freeu{q}, \freem{q}) \to (q, \eta_q, \mu_q)$ to be $\epsilon\hoc{\kappa} \colon \free_q \to q$ for $\kappa$ such that $q$ is $\kappa$-small. 
    The map $\mcoun_q$ preserves the unit because the maps $\couq{\alpha}$ form a cocone. 
    That $\mcoun_q$ preserves multiplication is a direct result of Lemma~\ref{lem:cou-mu-commute}. Therefore $\mcoun_q$ is indeed a map of $\tri$-monoids. 

    Next we want to show that $\mcoun$ is natural. Let $f\colon (p, \eta_p, \mu_p) \to (q, \eta_q, \mu_q)$ in $\Mod(\poly)$. It suffices to show for all $\alpha$ the following diagram commutes. 
    % https://q.uiver.app/#q=WzAsNCxbMCwwLCJwXFxob2N7XFxhbHBoYX0iXSxbMSwwLCJxXFxob2N7XFxhbHBoYX0iXSxbMCwxLCJwIl0sWzEsMSwicSJdLFswLDIsIlxcY291e3B9e1xcYWxwaGF9IiwyXSxbMSwzLCJcXGNvdXF7XFxhbHBoYX0iXSxbMCwxLCJmXFxob2N7XFxhbHBoYX0iXSxbMiwzLCJmIiwyXV0=
    \[\begin{tikzcd}
    	{p\hoc{\alpha}} & {q\hoc{\alpha}} \\
    	p & q
    	\arrow["{\cou{p}{\alpha}}"', from=1-1, to=2-1]
    	\arrow["{\couq{\alpha}}", from=1-2, to=2-2]
    	\arrow["{f\hoc{\alpha}}", from=1-1, to=1-2]
    	\arrow["f"', from=2-1, to=2-2]
    \end{tikzcd}\]

    We show this by induction on $\alpha$. For $\alpha = 0$, $f\hoc{\alpha}$ is the identity on $\yon$ and the diagram commutes because $f$ preserves the unit.

    Suppose that the diagram commutes for all $\alpha' < \alpha$. If $\alpha$ is a successor ordinal --- say $\alpha = \alpha' + 1$ --- then we want to show that the following diagram commutes.
    % https://q.uiver.app/#q=WzAsNCxbMCwwLCJcXHlvbiArIHAgXFx0cmkgcFxcaG9je1xcYWxwaGEnfSJdLFswLDEsInAiXSxbMSwwLCJcXHlvbiArIHEgXFx0cmkgcVxcaG9je1xcYWxwaGEnfSJdLFsxLDEsInEiXSxbMCwyLCJcXHlvbiArIGYgXFx0cmkgZlxcaG9je1xcYWxwaGEnfSJdLFsxLDMsImYiXSxbMCwxLCJbXFxmcmVldV9wLCBcXGZyZWVtX3AgXFxjaXJjIChwIFxcdHJpIFxcY291e3B9e1xcYWxwaGEnfSldIiwyXSxbMiwzLCJbXFxmcmVldV9xLCBcXGZyZWVtX3FcXGNpcmMocSBcXHRyaSBcXGNvdXF7XFxhbHBoYSd9KV0gIl1d
    \[\begin{tikzcd}
    	{\yon + p \tri p\hoc{\alpha'}} & {\yon + q \tri q\hoc{\alpha'}} \\
    	p & q
    	\arrow["{\yon + f \tri f\hoc{\alpha'}}", from=1-1, to=1-2]
    	\arrow["f", from=2-1, to=2-2]
    	\arrow["{(\eta_p, \mu_p \circ (p \tri \cou{p}{\alpha'}))}"', from=1-1, to=2-1]
    	\arrow["{(\eta_q, \mu_q\circ(q \tri \couq{\alpha'})) }", from=1-2, to=2-2]
    \end{tikzcd}\]

    It commutes on the first term of the coproduct, again because $f$ preserves the unit. To show that it commutes on the second term, we want to show that the outer diagram in the following commutes. 

    % https://q.uiver.app/#q=WzAsNixbMSwwLCJxXFx0cmkgcVxcaG9je1xcYWxwaGEnfSJdLFswLDAsInAgXFx0cmkgcCBcXGhvY3tcXGFscGhhJ30iXSxbMCwxLCJwIFxcdHJpIHAiXSxbMSwxLCJxXFx0cmkgcSJdLFswLDIsInAiXSxbMSwyLCJxIl0sWzEsMCwiZlxcdHJpIGZcXGhvY3tcXGFscGhhJ30iXSxbMSwyLCJwIFxcdHJpIFxcY291e3B9e1xcYWxwaGEnfSIsMl0sWzAsMywicVxcdHJpIFxcY291cXtcXGFscGhhJ30iXSxbMiwzLCJmXFx0cmkgZiIsMl0sWzIsNCwiXFxtdV9wIiwyXSxbMyw1LCJcXG11X3EiXSxbNCw1LCJmIiwyXV0=
    \[\begin{tikzcd}[column sep = large]
    	{p \tri p \hoc{\alpha'}} & {q\tri q\hoc{\alpha'}} \\
    	{p \tri p} & {q\tri q} \\
    	p & q
    	\arrow["{f\tri f\hoc{\alpha'}}", from=1-1, to=1-2]
    	\arrow["{p \tri \cou{p}{\alpha'}}"', from=1-1, to=2-1]
    	\arrow["{q\tri \couq{\alpha'}}", from=1-2, to=2-2]
    	\arrow["{f\tri f}"', from=2-1, to=2-2]
    	\arrow["{\mu_p}"', from=2-1, to=3-1]
    	\arrow["{\mu_q}", from=2-2, to=3-2]
    	\arrow["f", from=3-1, to=3-2]
    \end{tikzcd}\]
    The top square commutes by the induction hypothesis and the bottom square commutes because $f$ preserves multiplication.

%     If $\alpha$ is a limit ordinal, consider the following diagram 
%     % https://q.uiver.app/#q=WzAsNixbMCwwLCJwXFxob2N7XFxhbHBoYSd9Il0sWzEsMSwicFxcaG9je1xcYWxwaGF9Il0sWzIsMSwicVxcaG9je1xcYWxwaGF9Il0sWzEsMiwicCJdLFsyLDIsInEiXSxbMiwwLCJxXFxob2N7XFxhbHBoYSd9Il0sWzAsNSwiZlxcaG9je1xcYWxwaGEnfSJdLFs1LDJdLFsyLDQsIlxcZXBzaWxvblxcaG9je1xcYWxwaGF9Il0sWzEsMywiXFxlcHNpbG9uXFxob2N7XFxhbHBoYX0iXSxbMSwyLCJmXFxob2N7XFxhbHBoYX0iXSxbMyw0LCJmIiwxXSxbMCwxXSxbMCwzLCJcXGVwc2lsb25cXGhvY3tcXGFscGhhJ30iLDIseyJjdXJ2ZSI6M31dLFs1LDQsIlxcZXBzaWxvblxcaG9je1xcYWxwaGEnfSIsMCx7ImN1cnZlIjotNX1dXQ==
% \[\begin{tikzcd}
% 	{p\hoc{\alpha'}} && {q\hoc{\alpha'}} \\
% 	& {p\hoc{\alpha}} & {q\hoc{\alpha}} \\
% 	& p & q
% 	\arrow["{f\hoc{\alpha'}}", from=1-1, to=1-3]
% 	\arrow[from=1-3, to=2-3]
% 	\arrow["{\epsilon\hoc{\alpha}}", from=2-3, to=3-3]
% 	\arrow["{\epsilon\hoc{\alpha}}", from=2-2, to=3-2]
% 	\arrow["{f\hoc{\alpha}}", from=2-2, to=2-3]
% 	\arrow["f"{description}, from=3-2, to=3-3]
% 	\arrow[from=1-1, to=2-2]
% 	\arrow["{\epsilon\hoc{\alpha'}}"', curve={height=18pt}, from=1-1, to=3-2]
% 	\arrow["{\epsilon\hoc{\alpha'}}", curve={height=-30pt}, from=1-3, to=3-3]
% \end{tikzcd}\]

If $\alpha$ is a limit ordinal, consider the cocones
\[
    p\hoc{\alpha'} \To{f\hoc{\alpha'}} q\hoc{\alpha'} \To{\epsilon\hoc{\alpha'}} q
    \qqand
    p\hoc{\alpha'} \To{\epsilon\hoc{\alpha'}} p \To{f} q.
\]
By uniqueness, the cocone on the left induces the map $p\hoc{\alpha} \To{f\hoc{\alpha}} q\hoc{\alpha} \To{\epsilon\hoc{\alpha}} q$. Also by uniqueness, the cocone on the right induces the map $p\hoc{\alpha} \To{\epsilon\hoc{\alpha}} p \To{f} q$. Furthermore, these cocones are identical by the induction hypothesis and so their induced maps are equal, as desired.

Finally, to show that the unit and counit form an adjunction we must show that for a polynomial $p$
\[
    \free_p \xrightarrow{\free_{\mun_p}} \free_{\free_p} \xrightarrow{\mcoun_{\free_p}} \free_p
\] is the identity. 

It suffices to show that for all $\alpha$, \[p\hoc{\alpha} \xrightarrow{(\mun_p)\hoc{\alpha}} (\free_p)\hoc{\alpha} \xrightarrow{\mcoun\hoc{\alpha}} \free_p\] is the inclusion $\iota\hoc{\alpha}$. We do this by transfinite induction. The base case and the induction step for limit ordinals follow directly from definitions. For successor ordinals $\alpha + 1$, it suffices to show that the following diagram commutes:

% https://q.uiver.app/#q=WzAsOCxbMCwwLCJcXHlvbiArIHAgXFx0cmkgcFxcaG9je1xcYWxwaGF9Il0sWzAsMSwiXFx5b24gKyBwIFxcdHJpIHAgXFxob2N7XFxhbHBoYX0iXSxbMSwxLCJcXHlvbiArIHAgXFx0cmkgKFxcZnJlZV9wKVxcaG9je1xcYWxwaGF9Il0sWzIsMSwiXFx5b24gKyBwIFxcdHJpIFxcZnJlZV9wIl0sWzIsMCwiXFx5b24rIHAgXFx0cmkgXFxmcmVlX3AiXSxbMCwyLCJwXFxob2N7XFxhbHBoYSsxfSJdLFsxLDIsIlxceW9uICsgXFxmcmVlX3AgXFx0cmkgKFxcZnJlZV9wKVxcaG9je1xcYWxwaGF9Il0sWzIsMiwiXFxmcmVlX3AiXSxbMCwxLCIiLDAseyJsZXZlbCI6Miwic3R5bGUiOnsiaGVhZCI6eyJuYW1lIjoibm9uZSJ9fX1dLFswLDQsIlxceW9uICsgcCBcXHRyaSBcXGlvdGFcXGhvY3tcXGFscGhhfSJdLFsxLDIsIlxceW9uICsgcCBcXHRyaSAoXFxtdW5fcClcXGhvY3tcXGFscGhhfSJdLFsyLDMsIlxceW9uICsgcCBcXHRyaSBcXG1jb3VuXFxob2N7XFxhbHBoYX0iXSxbNSw2LCIoXFxtdW5fcClcXGhvY3tcXGFscGhhKzF9IiwyXSxbMiw2LCJcXHlvbiArIFxcbXVuX3AgXFx0cmkgKFxcZnJlZV9wKVxcaG9je1xcYWxwaGF9Il0sWzYsNywiXFxtY291blxcaG9je1xcYWxwaGErMX0iLDJdLFs3LDMsIlxcaXNvIiwyXSxbMSw1LCIiLDAseyJsZXZlbCI6Miwic3R5bGUiOnsiaGVhZCI6eyJuYW1lIjoibm9uZSJ9fX1dLFs0LDMsIiIsMCx7ImxldmVsIjoyLCJzdHlsZSI6eyJoZWFkIjp7Im5hbWUiOiJub25lIn19fV1d
\[\begin{tikzcd}[column sep=huge]
	{\yon + p \tri p\hoc{\alpha}} && {\yon+ p \tri \free_p} \\
	{\yon + p \tri p \hoc{\alpha}} & {\yon + p \tri (\free_p)\hoc{\alpha}} & {\yon + p \tri \free_p} \\
	{p\hoc{\alpha+1}} & {\yon + \free_p \tri (\free_p)\hoc{\alpha}} & {\free_p}
	\arrow["{\yon + p \tri \iota\hoc{\alpha}}", from=1-1, to=1-3]
	\arrow[Rightarrow, no head, from=1-1, to=2-1]
	\arrow[Rightarrow, no head, from=1-3, to=2-3]
	\arrow["{\yon + p \tri (\mun_p)\hoc{\alpha}}", from=2-1, to=2-2]
	\arrow[Rightarrow, no head, from=2-1, to=3-1]
	\arrow["{\yon + p \tri \mcoun\hoc{\alpha}}", from=2-2, to=2-3]
	\arrow["{\yon + \mun_p \tri (\free_p)\hoc{\alpha}}", from=2-2, to=3-2]
	\arrow["{(\mun_p)\hoc{\alpha+1}}"', from=3-1, to=3-2]
	\arrow["{\mcoun\hoc{\alpha+1}}"', from=3-2, to=3-3]
	\arrow["\iso"', from=3-3, to=2-3]
\end{tikzcd}\]

The top square commutes by the induction hypothesis. The bottom left and right squares commute by definitions of $(\mun_p)\hoc{\alpha+1}$ and $\mcoun\hoc{\alpha+1}$, respectively.

We must also show that for a $\tri$-monoid $(q, \eta_q, \mu_q)$
\[
    q \xrightarrow{\mun_q} \free_q \xrightarrow{\mcoun_q} q
\] is the identity as well. 

Since the diagrams below commutes, it suffices to show that $\couq{1} \circ \lambda\hoc{0}$ is the identity. This is immediate from the definition of $\couq{1}$ and the unit law for $(q, \eta_q, \mu_q)$.

% https://q.uiver.app/#q=WzAsNCxbMCwwLCJxIl0sWzEsMCwicVxcaG9jezF9Il0sWzIsMCwiXFxmcmVlX3EiXSxbMywwLCJxIl0sWzIsMywiXFxlcHNpbG9uX3EiXSxbMSwyLCJcXGlvdGFcXGhvY3sxfSJdLFswLDEsIlxcbGFtYmRhXFxob2N7MH0iXSxbMSwzLCJcXGVwc2lsb25cXGhvY3sxfSIsMix7ImN1cnZlIjozfV1d
\[\begin{tikzcd}
	q & {q\hoc{1}} & {\free_q} & q
	\arrow["{\epsilon_q}", from=1-3, to=1-4]
	\arrow["{\iota\hoc{1}}", from=1-2, to=1-3]
	\arrow["{\lambda\hoc{0}}", from=1-1, to=1-2]
	\arrow["{\epsilon\hoc{1}}"', curve={height=18pt}, from=1-2, to=1-4]
\end{tikzcd}\]    
\end{proof}

\section{Proofs for the Cofree Comonad Monad}\label{app.cofree}

\propcofree*

\begin{proof}
Given a polynomial $p$, define polynomials $p^{(i)}$ for $i\in\nn$ by
\[
  p\coh{0}\coloneqq\yon
  \qqand
  p\coh{1+i}\coloneqq\yon\times\left(p\tri p\coh{i}\right)
\]
There is a projection map $\pi\coh{0}\colon p\coh{1}\to p\coh{0}$, and if $\pi\coh{i}\colon p\coh{1+i}\to p\coh{i}$ has been defined, then we can define $\pi\coh{1+i}\coloneqq \yon\times(p\tri\pi\coh{i})$. Now define the polynomial
\begin{equation}\label{eqn.construct_cofree}
\cofree_p\coloneqq\lim\big(\cdots\To{\pi\coh{2}}p\coh{2}\To{\pi\coh{1}}p\coh{1}\To{\pi\coh{0}}p\coh{0}\big)
\end{equation}
and we note that this construction $p\mapsto \cofree_p$ is natural in $p:\poly$.

This polynomial comes equipped with a counit $\epsilon\colon\cofree_p\to\yon=p\coh{0}$ given by the projection. We next construct the comultiplication $\delta\colon\cofree_p\to\cofree_p\tri\cofree_p$. Since $\tri$ commutes with connected limits, we have
\[
  \cofree_p\tri\cofree_p=
  \left(\lim_{i_1}p\coh{i_1}\right)\tri\left(\lim_{i_2}p\coh{i_2}\right)\cong
  \lim_{i_1,i_2}\left(p\coh{i_1}\tri p\coh{i_2}\right)
\]
To obtain the comultiplication $\lim_ip\coh{i}\to\lim_{i_1,i_2}(p\coh{i_1}\tri p\coh{i_2})$, it suffices to produce a natural choice of polynomial map $\varphi_{i_1,i_2}\colon p\coh{i_1+i_2}\to p\coh{i_1}\tri p\coh{i_2}$ for any $i_1,i_2:\nn$. When $i_1=0$ or $i_2=0$, we use the unit identity for $\tri$. By induction, assume given $\varphi_{i_1,1+i_2}$; we construct $\varphi_{1+i_1,1+i_2}$ as follows:
\begin{align}
\nonumber
  p\coh{1+i_1+1+i_2}&=
  \yon\times \left(p\tri p\coh{i_1+1+i_2}\right)\\&\to
\label{eqn.induction}
  \yon\times \left(p\tri p\coh{i_1}\tri p\coh{1+i_2}\right)\\&\to
\label{eqn.special}
  \left(\yon\times p\tri p\coh{i_1}\right)\tri p\coh{1+i_2}\\&=
\nonumber
  p\coh{1+i_1}\tri p\coh{1+i_2}
\end{align}
where \eqref{eqn.induction} is $\varphi_{i_1,1+i_2}$ and it remains to construct \eqref{eqn.special}. Since $-\tri q$ preserves products for any $q$, constructing \eqref{eqn.special} is equivalent to constructing two maps
\[
\yon\times \left(p\tri p\coh{i_1}\tri p\coh{1+i_2}\right)\To{\phi\coh{i_1,i_2}} p\coh{1+i_2}
\qqand
\yon\times \left(p\tri p\coh{i_1}\tri p\coh{1+i_2}\right)\to p\tri p\coh{i_1}\tri p\coh{1+i_2}.
\]
For the latter we use the second projection. The former, $\phi\coh{i_1,i_2}\colon p\coh{1+i_1+1+i_2}\to p\coh{1+i_2}$, is the more interesting one; for it we also use projections $p\coh{i_1}\to p\coh{0}=\yon$ and $\pi\coh{i_2}\colon p\coh{i_2+1}\to p\coh{i_2}$ to obtain:
\[
\yon\times \left(p\tri p\coh{i_1}\tri p\coh{1+i_2}\right)\to
\yon\times \left(p\tri\yon\tri p\coh{i_2}\right)\cong p\coh{1+i_2}
\]
We leave the naturality of this to the reader.

It remains to check that $\epsilon$ and $\delta$ satisfy unitality and coassociativity. The base cases above imply unitality. Proving coassociativity amounts to proving that the following diagram commutes:
\[
\begin{tikzcd}[column sep=50pt]
	p\coh{1+i_1+1+i_2+1+i_3}\ar[r, "\phi\coh{i_1,i_2+1+i_3}"]\ar[d, "\phi\coh{i_1+1+i_2,i_3}"']&
	p\coh{1+i_2+1+i_3}\ar[d, "\phi\coh{i_2,i_3}"]\\
	p\coh{1+i_3}\ar[r,equal]&p\coh{1+i_3}
\end{tikzcd}
\]
This can be shown by induction on $i_3$.
\end{proof}

\begin{proposition}
\label{prop:cofree-monoidal}
There is a monoidal structure on $\cofree\colon\poly\to\poly$
\[
  \yon\to\cofree_\yon
  \qqand
  \cofree_p\otimes\cofree_q\to\cofree_{p\otimes q}.
\]
\end{proposition}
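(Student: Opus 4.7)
The plan is to use the adjunction $U \dashv \cofree_-$ of \cref{thm:cofree_comonad_comonad} to reduce the construction of the lax monoidal structure maps to specifying polynomial maps into $\yon$ and $p \otimes q$.

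For the unit $\yon \to \cofree_\yon$, I would equip $\yon$ with its canonical $\tri$-comonoid structure (the identity counit together with the unit isomorphism $\yon \cong \yon \tri \yon$ as comultiplication). Transposing the identity map $\yon \to \yon$ across the adjunction then yields a $\tri$-comonoid map $\yon \to \cofree_\yon$, which we take as the required unit. Explicitly, since $\yon\coh{i} \cong \yon$ for every $i$, one can verify directly that $\cofree_\yon \cong \yon$, so this map is in fact an isomorphism.

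For the tensorator $\cofree_p \otimes \cofree_q \to \cofree_{p \otimes q}$, I would first equip $\cofree_p \otimes \cofree_q$ with a $\tri$-comonoid structure using the duoidality of $(\poly, \otimes, \tri)$, which provides natural maps $\yon \to \yon \otimes \yon$ and $(a \tri b) \otimes (c \tri d) \to (a \otimes c) \tri (b \otimes d)$. Take the counit to be the composite $\cofree_p \otimes \cofree_q \to \yon \otimes \yon \cong \yon$ built from the counits of $\cofree_p$ and $\cofree_q$, and take the comultiplication to be
\[
\cofree_p \otimes \cofree_q \to (\cofree_p \tri \cofree_p) \otimes (\cofree_q \tri \cofree_q) \to (\cofree_p \otimes \cofree_q) \tri (\cofree_p \otimes \cofree_q).
\]
Then, writing $\varepsilon_p \colon \cofree_p \to p$ for the adjunction counit, tensoring gives a polynomial map $\varepsilon_p \otimes \varepsilon_q \colon \cofree_p \otimes \cofree_q \to p \otimes q$. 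Transposing under $U \dashv \cofree_-$ produces the required $\tri$-comonoid map $\cofree_p \otimes \cofree_q \to \cofree_{p \otimes q}$.

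The main obstacle is verifying that $\cofree_p \otimes \cofree_q$ really is a $\tri$-comonoid under the counit and comultiplication above. This is a diagram chase combining the comonoid axioms of $\cofree_p$ and $\cofree_q$ with the coherence axioms of duoidality; it is in fact an instance of the general fact that in a duoidal category, the $\otimes$-tensor of two $\tri$-comonoids is again a $\tri$-comonoid. Once this is in place, the lax monoidal axioms for $\cofree_-$, namely unitality and the hexagon for associativity, reduce via the adjunction to checking the commutativity of diagrams of polynomial maps into $p$ and $p \otimes q \otimes r$; each such diagram commutes by naturality of the counits $\varepsilon_p, \varepsilon_q, \varepsilon_r$ together with the coherence of the duoidal structure.
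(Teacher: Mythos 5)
Your construction is essentially the paper's: both obtain the $\tri$-comonoid structure on $\yon$ and on $\cofree_p\otimes\cofree_q$ from duoidality of $(\otimes,\tri)$ and then transpose the evident polynomial maps $\yon\to\yon$ and $\epsilon_p\otimes\epsilon_q\colon\cofree_p\otimes\cofree_q\to p\otimes q$ across the adjunction $U\dashv\cofree_-$, with unitality and associativity checked by the same reduction. One correction to a side remark: since $\yon\coh{1+i}=\yon\times(\yon\tri\yon\coh{i})\cong\yon^{i+2}$, the stages are not $\yon$, and in fact $\cofree_\yon\cong\yon^{\nn}$; hence the unit map $\yon\to\cofree_\yon$ is \emph{not} an isomorphism (it selects the unique position and collapses the $\nn$-many directions), though this does not affect your argument, which only needs the map to exist.
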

\begin{proof}
The polynomial $\cofree_\yon\cong\yon^\nn$ has a unique position, and this defines the first map. However, it is conceptually cleaner to realize that comonads are closed under $\otimes$ by duoidality, and hence both $\yon$ and $\cofree_p\otimes\cofree_q$ carry comonad structures. Thus the desired maps are induced by the obvious polynomial maps $\yon\cong\yon$ and $\cofree_p\otimes\cofree_q\to p\otimes q$. It is straightforward to check that these are unital and associative.
\end{proof}

\cofreeadjunction*

\begin{proof}
We will abuse notation and denote the comonoid $(c,\epsilon,\delta):\catsharp$ simply by its carrier $c$. We first provide the counit and unit of the desired adjunction. The counit
\[
\epsilon_p\colon\cofree_p\to p
\]
is given by composing the projection map $\cofree_p\to p\coh{1}$ from construction \eqref{eqn.construct_cofree} with the projection $p\coh{1}\cong\yon\times p\to p$. Since $\cofree_c$ is defined as a limit, the unit
\[
\eta_c\colon c\coto\cofree_c
\]
will be given by defining maps $\eta\coh{i}\colon c\to c\coh{i}$ commuting with the projections $\pi\coh{i}\colon c\coh{1+i}\to c\coh{i}$, for each $i:\nn$, and then showing that the resulting polynomial map $\eta_c$ is indeed a cofunctor. Noting that $c\coh{0}=\yon$, we define
\[
\eta\coh{0}\coloneqq\epsilon
\]
Given $\eta\coh{i}\colon c\to c\coh{i}$, we define $\eta\coh{1+i}$ as the composite
\[
c\To{(\epsilon,\delta)}\yon\times(c\tri c)\To{\yon\times(c\tri\eta\coh{i})}\yon\times\left(c\tri c\coh{i}\right)=c\coh{1+i}.
\]
Clearly, we have $\eta\coh{0}=\pi\coh{0}\circ\eta\coh{1}$. It is easy to check that if $\eta\coh{i}=\pi\coh{i}\circ\eta\coh{1+i}$ then $\eta\coh{1+i}=\pi\coh{1+i}\circ\eta\coh{2+i}$. Thus we have constructed a polynomial map $\eta\colon c\to \cofree_c$. It clearly commutes with the counit, so it suffices to show that $\eta$ commutes with the comultiplication, which amounts to showing that the following diagram commutes
\[
\begin{tikzcd}
  c\ar[r, "\delta"]\ar[d, "{(\epsilon,\delta)}"']&[27pt]
  c\tri c\ar[r, "{(\epsilon,\delta)\tri(\epsilon,\delta)}"]&[15pt]
  \big(\yon\times(c\tri c)\big)\tri\big(\yon\times(c\tri c)\big)\ar[d, "{(\yon\times c\tri\eta\coh{i_1})\tri(\yon\times c\tri\eta\coh{i_2})}"]\\
  \yon\times(c\tri c)\ar[r, "{\yon\times c\tri\eta\coh{i_1+1+i_2}}"']&
  \yon\times(c\tri c\coh{i_1+1+i_2})\ar[r, "\varphi_{1+i_1,1+i_2}"']&
  \big(\yon\times(c\tri c\coh{i_1})\big)\tri\big(\yon\times(c\tri c\coh{i_2})\big)
\end{tikzcd}
\]
for all $i_1,i_2:\nn$, where $\varphi_{1+i_1,1+i_2}$ is the map constructed in Equation~\eqref{eqn.induction} and Equation~\eqref{eqn.special}. Commutativity follows from the counitality and coassociativity of the comonoid $c$.

The triangle identities are straightforward as well. Indeed, for any comonoid $c:\catsharp$, the composite $c\To{U\circ\eta_c} \cofree_c\To{\epsilon_{Uc}} c$ is equal to the composite of $c\To{(\epsilon,c)}c\coh{1}=\yon\times c$, with the projection $c\coh{1}\to c$, the result of which is the identity. Finally, for any polynomial $p:\poly$, the composite $\cofree_p\To{\eta_{\cofree_p}}\cofree_{\cofree_p}\To{\cofree_{\epsilon_p}}\cofree_p$ is given by taking a limit of maps of the form
\[
	\cofree_p\To{(\epsilon,\delta)}
	\yon\times(\cofree_p\tri\cofree_p)\To{\yon\times(\cofree_p\tri\eta\coh{i})}
	\yon\times(\cofree_p\tri\cofree_p\coh{i})\To{\yon\times(\epsilon_p\tri\epsilon_p\coh{i})}
	\yon\times(p\tri p\coh{i})
\]
Each one is in fact the projection $\cofree_p\to p\coh{i+1}$, so the resulting map is the identity on $\cofree_p$, completing the proof.
\end{proof}

\section{Proofs for the Module Structure $\free_p \otimes \cofree_q \to \free_{p \otimes q}$}
\label{apx:module}
\naturalinteraction*\label{pf:natural-interaction}
\begin{proof}
    To show that $\psi$ is natural in $p$, it suffices to show that for a maps $ p \to p'$ and $q \to q'$ in $\poly$ the following diagram commutes:
    % https://q.uiver.app/#q=WzAsNCxbMCwwLCJwIl0sWzAsMSwicCciXSxbMSwwLCJbXFxjb2ZyZWVfcSwgXFxmcmVlX3twIFxcb3RpbWVzIHF9XSJdLFsxLDEsIltcXGNvZnJlZV97cSd9LCBcXGZyZWVfe3AnIFxcb3RpbWVzIHEnfV0iXSxbMCwxXSxbMCwyXSxbMiwzXSxbMSwzXV0=
    \[\begin{tikzcd}
    	p & {[\cofree_q, \free_{p \otimes q}]} \\
    	{p'} & {[\cofree_{q'}, \free_{p' \otimes q'}]}
    	\arrow[from=1-1, to=2-1]
    	\arrow[from=1-1, to=1-2]
    	\arrow[from=1-2, to=2-2]
    	\arrow[from=2-1, to=2-2]
    \end{tikzcd}\]
    This follows immediately from the commutativity of the following diagram.
    % https://q.uiver.app/#q=WzAsNixbMCwwLCJwIFxcb3RpbWVzIFxcY29mcmVlX3EiXSxbMCwxLCJwJyBcXG90aW1lcyBcXGNvZnJlZV97cSd9Il0sWzEsMCwicCBcXG90aW1lcyBxIl0sWzEsMSwicCdcXG90aW1lcyBxJyJdLFsyLDAsIlxcZnJlZV97cCBcXG90aW1lcyBxfSJdLFsyLDEsIlxcZnJlZV97cCdcXG90aW1lcyBxJ30iXSxbMCwxXSxbMCwyXSxbMiw0XSxbMSwzXSxbMiwzXSxbMyw1XSxbNCw1XV0=
    \[\begin{tikzcd}
    	{p \otimes \cofree_q} & {p \otimes q} & {\free_{p \otimes q}} \\
    	{p' \otimes \cofree_{q'}} & {p'\otimes q'} & {\free_{p'\otimes q'}}
    	\arrow[from=1-1, to=2-1]
    	\arrow[from=1-1, to=1-2]
    	\arrow[from=1-2, to=1-3]
    	\arrow[from=2-1, to=2-2]
    	\arrow[from=1-2, to=2-2]
    	\arrow[from=2-2, to=2-3]
    	\arrow[from=1-3, to=2-3]
    \end{tikzcd}
    \]
    Note that the square on the left commutes by naturality of the counit of the adjunction in \cref{thm:cofree_comonad_comonad}. The square on the right commutes by naturality of unit of the adjunction in \cref{thm:free-adjunction}.
\end{proof}

\module* \label{pf:module}

\begin{proof}
    We must show that two diagrams commute. First, we will  show that the following diagram commutes.
    % https://q.uiver.app/#q=WzAsNCxbMCwwLCJcXHlvbiBcXG90aW1lcyBcXGZyZWVfciJdLFswLDEsIlxcY29mcmVlX1xceW9uIFxcb3RpbWVzIFxcZnJlZV9yIl0sWzEsMCwiXFxmcmVlX3IiXSxbMSwxLCJcXGZyZWVfe1xceW9uIFxcb3RpbWVzIHJ9Il0sWzAsMiwiXFxpc28iXSxbMCwxXSxbMSwzXSxbMywyLCJcXGlzbyIsMl1d
    \[\begin{tikzcd}
    	{\yon \otimes \free_r} & {\free_r} \\
    	{\cofree_\yon \otimes \free_r} & {\free_{\yon \otimes r}}
    	\arrow["\iso", from=1-1, to=1-2]
    	\arrow[from=1-1, to=2-1]
    	\arrow[from=2-1, to=2-2, "\modstruct_{\yon,r}"']
    	\arrow["\iso"', from=2-2, to=1-2]
    \end{tikzcd}\]
    It suffices to show that the diagram below commutes: 
    % https://q.uiver.app/#q=WzAsNixbMCwwLCJcXHlvbiBcXG90aW1lcyByIl0sWzAsMSwiXFxjb2ZyZWVfXFx5b24gXFxvdGltZXMgciJdLFsxLDAsInIiXSxbMSwxLCJcXHlvbiBcXG90aW1lcyByIl0sWzIsMSwiXFxmcmVlX3tcXHlvbiBcXG90aW1lcyByfSJdLFsyLDAsIlxcZnJlZV9yIl0sWzQsNV0sWzAsMV0sWzEsM10sWzMsNF0sWzAsMl0sWzIsNV0sWzMsMl1d
    \[\begin{tikzcd}
    	{\yon \otimes r} & r & {\free_r} \\
    	{\cofree_\yon \otimes r} & {\yon \otimes r} & {\free_{\yon \otimes r}}
    	\arrow[from=2-3, to=1-3]
    	\arrow[from=1-1, to=2-1]
    	\arrow[from=2-1, to=2-2]
    	\arrow[from=2-2, to=2-3]
    	\arrow[from=1-1, to=1-2]
    	\arrow[from=1-2, to=1-3]
    	\arrow[from=2-2, to=1-2]
    \end{tikzcd}\]
    The square on the left commutes because $\yon \to \cofree_\yon \to \yon$ is the identity. The square on the right commutes by the naturality of the unit of the adjunction in Theorem~\ref{thm:free-adjunction}.

    Next we must show that the following diagram commutes.
    % https://q.uiver.app/#q=WzAsNCxbMCwwLCJcXGNvZnJlZV9wIFxcb3RpbWVzIFxcY29mcmVlX3EgXFxvdGltZXMgXFxmcmVlX3IiXSxbMCwxLCJcXGNvZnJlZV97cCBcXG90aW1lcyBxfSBcXG90aW1lcyBcXGZyZWVfciJdLFsxLDAsIlxcY29mcmVlX3AgXFxvdGltZXMgXFxmcmVlX3txIFxcb3RpbWVzIHJ9Il0sWzEsMSwiXFxmcmVlX3twIFxcb3RpbWVzIHFcXG90aW1lcyByfSJdLFsxLDNdLFswLDFdLFswLDJdLFsyLDNdXQ==
    \[\begin{tikzcd}
    	{\cofree_p \otimes \cofree_q \otimes \free_r} & {\cofree_p \otimes \free_{q \otimes r}} \\
    	{\cofree_{p \otimes q} \otimes \free_r} & {\free_{p \otimes q\otimes r}}
    	\arrow[from=2-1, to=2-2]
    	\arrow[from=1-1, to=2-1]
    	\arrow[from=1-1, to=1-2]
    	\arrow[from=1-2, to=2-2]
    \end{tikzcd}\]
    It suffices to show that the following diagram commutes.
    % https://q.uiver.app/#q=WzAsNixbMCwwLCJcXGNvZnJlZV9wIFxcb3RpbWVzIFxcY29mcmVlX3EgXFxvdGltZXMgciJdLFswLDEsIlxcY29mcmVlX3twIFxcb3RpbWVzIHF9IFxcb3RpbWVzIHIiXSxbMSwwLCJcXGNvZnJlZV9wIFxcb3RpbWVzIHEgXFxvdGltZXMgciJdLFsxLDEsInAgXFxvdGltZXMgcSBcXG90aW1lcyByIl0sWzIsMCwiXFxjb2ZyZWVfcCBcXG90aW1lcyBcXGZyZWVfe3EgXFxvdGltZXMgcn0iXSxbMiwxLCJcXGZyZWVfe3AgXFxvdGltZXMgcSBcXG90aW1lcyByfSJdLFs0LDUsIlxccHNpX3twLCBxIFxcb3RpbWVzIHJ9Il0sWzMsNV0sWzIsM10sWzIsNF0sWzAsMl0sWzEsM10sWzAsMV1d
    \[\begin{tikzcd}
    	{\cofree_p \otimes \cofree_q \otimes r} & {\cofree_p \otimes q \otimes r} & {\cofree_p \otimes \free_{q \otimes r}} \\
    	{\cofree_{p \otimes q} \otimes r} & {p \otimes q \otimes r} & {\free_{p \otimes q \otimes r}}
    	\arrow["{\modstruct_{p, q \otimes r}}", from=1-3, to=2-3]
    	\arrow[from=2-2, to=2-3]
    	\arrow[from=1-2, to=2-2]
    	\arrow[from=1-2, to=1-3]
    	\arrow[from=1-1, to=1-2]
    	\arrow[from=2-1, to=2-2]
    	\arrow[from=1-1, to=2-1]
    \end{tikzcd}\]
    The square on the left commutes by definition of the laxator of $\cofree_-$ while the square on the right commutes by definition of $\modstruct_{p, q \otimes r}$.
\end{proof}

\end{document}